\documentclass[preprint,12pt]{elsarticle}
\usepackage[T1]{fontenc}
\usepackage{newtxtext}

\usepackage{amsmath}

\usepackage{amsthm}
\usepackage{mathtools}
\usepackage{newtxmath}

\usepackage{bm}

\usepackage{booktabs}
\usepackage{multirow}
\usepackage{array}
\usepackage{adjustbox}
\usepackage{graphicx}
\usepackage{algorithm}
\usepackage{algorithmic}
\usepackage{xcolor}
\usepackage{setspace}
\usepackage{indentfirst}
\usepackage{float}
\usepackage{pdflscape}

\usepackage[section]{placeins}
\usepackage{siunitx}
\usepackage{microtype}
\usepackage{enumitem}
\usepackage[font=small,labelfont=bf]{caption}
\usepackage{hyperref}
\usepackage[nameinlink,capitalise,noabbrev]{cleveref}

\allowdisplaybreaks
\graphicspath{{figures/}}

\hypersetup{
  colorlinks=true,
  linkcolor=blue!55!black,
  citecolor=green!40!black,
  urlcolor=blue!65!black,
  filecolor=magenta,
  pdftitle={Hard-Trace First-Order Residual Learning for Coupled Stokes--Brinkman--Darcy Flow},
  pdfauthor={Zefeng Liu and Hongxing Rui}
}

\makeatletter
\def\ps@pprintTitle{%
  \let\@oddhead\@empty\let\@evenhead\@empty
  \def\@oddfoot{\hfil\thepage\hfil}%
  \let\@evenfoot\@oddfoot}
\makeatother
\biboptions{sort&compress}
\newtheorem{proposition}{Proposition}
\newtheorem{assumption}{Assumption}

\newcommand{\OS}{\Omega_{\mathrm S}}
\newcommand{\OB}{\Omega_{\mathrm B}}
\newcommand{\OD}{\Omega_{\mathrm D}}
\newcommand{\GSB}{\Gamma_{\mathrm{SB}}}
\newcommand{\GBD}{\Gamma_{\mathrm{BD}}}
\newcommand{\GextS}{\Gamma_{\mathrm S}^{\mathrm{ext}}}
\newcommand{\GextB}{\Gamma_{\mathrm B}^{\mathrm{ext}}}
\newcommand{\GextD}{\Gamma_{\mathrm D}^{\mathrm{ext}}}
\newcommand{\R}{\mathcal R}
\newcommand{\M}{\mathfrak M}
\newcommand{\MN}{\widehat{\mathfrak M}}

\hypersetup{hidelinks}
\begin{document}

\begin{frontmatter}

\title{Hard-Trace First-Order Residual Learning for Coupled
Stokes--Brinkman--Darcy Flow}

\author[1]{Zefeng Liu}
\ead{18552029283@163.com}

\author[1]{Hongxing Rui\corref{cor1}}
\ead{hxrui@sdu.edu.cn}
\cortext[cor1]{Corresponding author.}

\affiliation[1]{organization={School of Mathematics, Shandong University},
                city={Jinan},
                state={Shandong},
                postcode={250100},
                country={China}}

\begin{abstract}
\begin{singlespace}
Stokes--Brinkman--Darcy (SBD) models connect free flow and porous flow through a Brinkman layer of finite thickness. The proposed method for solving these coupled equations consists of a mixed first-order formulation, shared interface traces and hard traction constraints, and a pressure correction based on boundary mass balance. Independent stresses, Darcy flux and pressure-gradient auxiliaries avoid second derivatives of network outputs. Boundary liftings and shared traces impose exterior and interface kinematic conditions exactly, while a stress map enforces native Brinkman--Darcy force balance. An integral momentum relation determines the pressure correction after training, without reference-pressure labels. Across two manufactured solutions and three permeabilities, mean corrected upper-pressure relative $L^2$ errors range from 0.87\% to 2.58\%; correction reduces both upper-pressure $L^2$ errors and the full Brinkman-pressure $H^1$ error in every run. Paired controls assess the contributions of interface sharing and traction enforcement. A non-manufactured filtration problem tests mixed-boundary adaptation against an independently refined finite-element reference, with all four runs satisfying the prescribed field and physical-consistency criteria.
\end{singlespace}
\end{abstract}

\begin{keyword}
Physics-informed neural networks \sep
First-order residual least squares \sep
Stokes--Brinkman--Darcy coupling \sep
Hard interface constraints \sep
Multiphysics flow
\end{keyword}

\end{frontmatter}

\section{Introduction}
\label{sec:introduction}

Free-fluid flow coupled to a porous medium arises in filtration, groundwater transport and biological exchange. Stokes and Darcy equations describe the two regions, while interface laws transmit mass and force. The experiments of Beavers and Joseph \cite{BeaversJoseph1967}, Saffman's modification \cite{Saffman1971}, and the homogenization analysis of J\"ager and Mikeli\'c \cite{JagerMikelic2000} underpin classical tangential slip conditions. The sharp-interface Beavers--Joseph condition has a restricted range of validity, particularly for general filtration directions \cite{EggenweilerRybak2020Interface}. Brinkman's viscous-drag model \cite{Brinkman1949} allows a transition region to be resolved explicitly. Ruan and Rybak \cite{RuanRybak2026} derived and analyzed full- and hybrid-dimensional Stokes--Brinkman--Darcy (SBD) models. We consider their full-dimensional setting, in which a Brinkman layer of finite thickness connects the free-flow and Darcy regions.

Mixed finite-element methods provide established discretizations for coupled flow \cite{LaytonSchieweckYotov2002}. First-order system least squares introduces independent stresses or fluxes \cite{CaiManteuffelMcCormick1997,BochevGunzburger2009}, including for Stokes--Darcy coupling \cite{MunzenmaierStarke2011}. Neural counterparts include deep least-squares methods \cite{CaiChenLiuLiu2020,BersetcheBorthagaray2023} and mixed networks for heterogeneous domains \cite{RezaeiEtAl2022Mixed}. These formulations provide a way to train on first derivatives. Here we use a mixed state to represent the three regional equations, and assess the resulting neural solver against exact solutions and an independent finite-element reference.

Boundary-satisfying neural trial functions date to Lagaris et al. \cite{Lagaris1998}. Physics-informed neural networks (PINNs) evaluate equation residuals by automatic differentiation \cite{RaissiPerdikarisKarniadakis2019,KarniadakisEtAl2021Review}; NSFnets apply this approach to incompressible flow \cite{JinEtAl2021NSFnets}. In coupled Stokes--Darcy problems, Pu and Feng \cite{PuFeng2022} studied small-parameter and interface difficulties using weighting, parallel networks and adaptive activation functions. When interface equations remain penalty terms, their accuracy depends on the balance reached during optimization. Gradient imbalance and different convergence rates among loss components help explain this difficulty \cite{WangTengPerdikaris2021,WangYuPerdikaris2022NTK}.

The first issue in SBD coupling is that its two interfaces require different shared quantities. Domain-decomposition methods such as cPINNs and FBPINNs provide local neural representations \cite{JagtapKharazmiKarniadakis2020,MoseleyMarkhamNissenMeyer2023}. Distance functions and transfinite interpolation impose exterior data exactly \cite{SukumarSrivastava2022,BerroneCanutoPintoreSukumar2023}, and von Tresckow et al. \cite{TresckowIonLoukrezis2026} use dedicated interface networks to enforce solution continuity between geometric patches. For Stokes/Darcy flow, Lu, Zhang and Zhu \cite{LuZhangZhu2025} impose hard exterior conditions but retain soft interface equations. In the present SBD model, full velocity must be shared at Stokes--Brinkman (SB), whereas only normal flux is shared at Brinkman--Darcy (BD); Darcy tangential flux must remain independent. The trial fields must encode this distinction together with the BD traction law. Exact sharing alone does not impose that force balance.

The second issue is pressure accuracy under strong Brinkman drag. FO-PINNs combine first-order reduction with exact boundary constraints \cite{GladstoneEtAl2023FOPINN}, and pressure-equation augmentation has been studied for incompressible flow \cite{GorayaEtAl2022PressureAugmentation}. In our scaled SBD formulation, however, a compatible low-order pressure perturbation survives both exact kinematic constraints and pressure augmentation. Dividing Brinkman momentum by the large drag scale weakens the loss response to this perturbation. We derive its loss contribution and use an integral momentum relation, together with prescribed boundary mass balance, to determine a correction after training. The coefficient requires boundary data, forcing and learned stress, without reference-pressure labels.

We therefore construct shared velocity and flux traces for the two interfaces, impose native BD traction through a stress lifting, and correct the remaining low-order pressure direction. The mixed formulation avoids second derivatives of network outputs. Verification uses the method of manufactured solutions (MMS), with two cases denoted MMS1 and MMS2. Five-seed benchmarks compare the kinematic-hard-trace configuration with two soft baselines; paired controls assess BD traction and interface sharing. For the complete method, pressure correction reduces upper-pressure field and Brinkman-pressure derivative errors in every run across two manufactured solutions and three permeabilities. A boundary-driven filtration (BDF) test then checks mixed-boundary adaptation against an independently refined finite-element solution.

\Cref{sec:model,sec:method} present the model and construction. \Cref{sec:experiments} defines the comparison protocols, and \cref{sec:results} reports the manufactured tests, component controls and BDF application. \Cref{sec:conclusion} discusses the findings and the next steps toward three-dimensional flow.

\section{Mathematical model}
\label{sec:model}

We study the full-dimensional, three-rectangle configuration with constant isotropic coefficients, nonsymmetric pseudo-stress and zero Stokes--Brinkman traction jump.

\subsection{Geometry, boundaries, and orientation}

Let \(\Omega=(0,1)\times(0,2)\).  Its closure is partitioned into the
closures of three nonoverlapping rectangles,
\begin{equation}
\OD=(0,1)\times(0,0.9),\qquad
\OB=(0,1)\times(0.9,1.1),\qquad
\OS=(0,1)\times(1.1,2).
\label{eq:domains}
\end{equation}
The horizontal interfaces are $\GSB=\{(x,1.1):0<x<1\}$ and $\GBD=\{(x,0.9):0<x<1\}$. Exterior portions $\Gamma_r^{\mathrm{ext}}=\partial\Omega_r\cap\partial\Omega$ comprise the Stokes top and sides, Brinkman sides, and Darcy bottom and sides. We use $n=(0,-1)^T$ and $t=(1,0)^T$ at both interfaces, with $n$ directed downward. Interface endpoints must match the prescribed corner data. \Cref{fig:geometry} summarizes the geometry and boundary/interface conditions.

\begin{figure}[H]
\centering
\includegraphics[width=\linewidth]{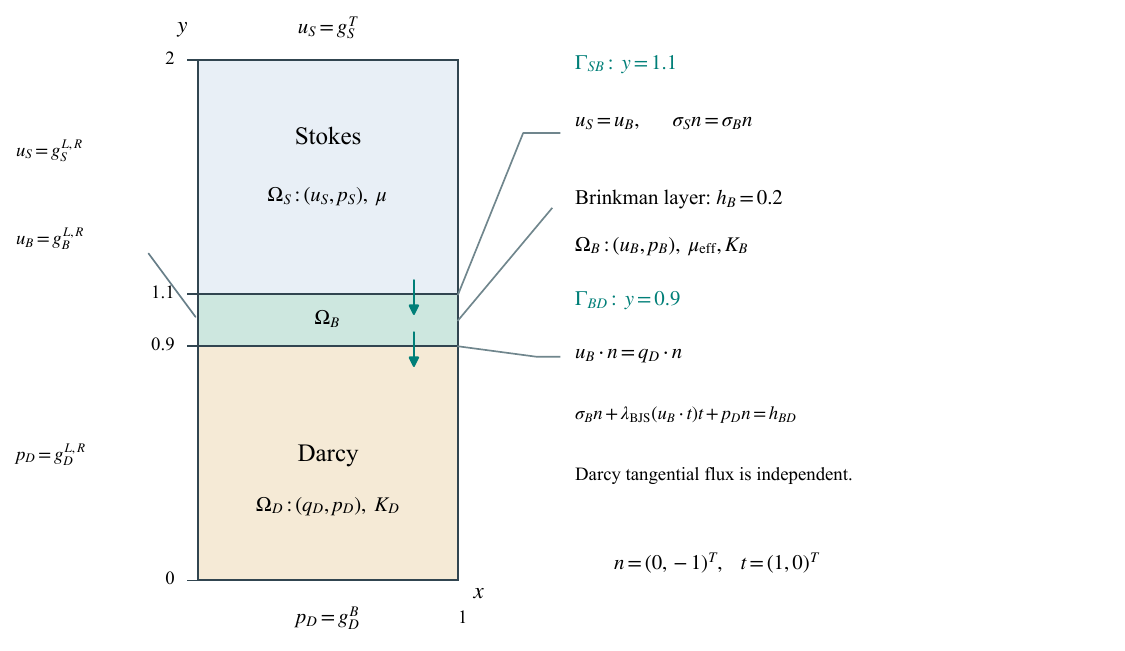}
\caption{Three-layer geometry for the manufactured problems, with prescribed exterior velocities and Darcy pressure and coupling-determined interface traces. Both interface normals point downward; only normal flux is continuous at Brinkman--Darcy.}
\label{fig:geometry}
\end{figure}

\subsection{Subdomain equations}

The original velocity--pressure formulation consists of the Stokes equations, the Brinkman equations with porous drag, and Darcy law with mass conservation:

\begin{subequations}\label{eq:original-sbd}
\begin{align}
-\mu\Delta u_{\mathrm S}+\nabla p_{\mathrm S}&=f_{\mathrm S},
&\nabla\cdot u_{\mathrm S}&=0 &&\text{in }\OS,\label{eq:original-stokes}\\
-\mu_{\mathrm{eff}}\Delta u_{\mathrm B}+\frac{\mu}{K_B}u_{\mathrm B}+\nabla p_{\mathrm B}&=f_{\mathrm B},
&\nabla\cdot u_{\mathrm B}&=0 &&\text{in }\OB,\label{eq:original-brinkman}\\
q_D&=-\frac{K_D}{\mu}\nabla p_D,
&\nabla\cdot q_D&=f_D &&\text{in }\OD.\label{eq:original-darcy}
\end{align}
\end{subequations}

The upper-region unknowns are velocity \(u_r:\Omega_r\to\mathbb R^2\) and
pressure \(p_r:\Omega_r\to\mathbb R\), where
\(r\in\{\mathrm S,\mathrm B\}\).  We use the nonsymmetric pseudo-stress
\begin{equation}
\sigma_r=\mu_r\nabla u_r-p_rI,\qquad
\mu_{\mathrm S}=\mu,\quad \mu_{\mathrm B}=\mu_{\mathrm{eff}}.
\label{eq:pseudostress}
\end{equation}
Hereafter, native traction is computed from the independently predicted pseudo-stress, whereas reconstructed traction uses $\mu_r\nabla u_r-p_rI$ evaluated from the predicted velocity and pressure.

The divergence is row-wise: $(\nabla\cdot\sigma)_i=\partial_x\sigma_{ix}+\partial_y\sigma_{iy}$. For constant viscosity and incompressibility, symmetric Cauchy stress has the same divergence but different interface traction; the pseudo-stress convention is used throughout.

For constant viscosity, the row-wise divergence satisfies
\begin{equation}
\nabla\cdot\sigma_r=\mu_r\Delta u_r-\nabla p_r,\qquad r\in\{\mathrm S,\mathrm B\}.
\label{eq:pseudostress-divergence}
\end{equation}
Thus the second-order viscous term and pressure gradient in \cref{eq:original-stokes,eq:original-brinkman} combine into $-\nabla\cdot\sigma_r$. Treating \cref{eq:pseudostress} as an independent constitutive equation gives the following mixed systems with first derivatives only.

In the Stokes region,
\begin{subequations}
\label{eq:stokes}
\begin{align}
\sigma_{\mathrm S}-\mu\nabla u_{\mathrm S}+p_{\mathrm S}I&=0,
&&\text{in }\OS,\\*
-\nabla\cdot\sigma_{\mathrm S}&=f_{\mathrm S},
&&\text{in }\OS,\\*
\nabla\cdot u_{\mathrm S}&=0,
&&\text{in }\OS.
\end{align}
\end{subequations}
In the transition region, the Brinkman system is
\begin{subequations}
\label{eq:brinkman}
\begin{align}
\sigma_{\mathrm B}-\mu_{\mathrm{eff}}\nabla u_{\mathrm B}
 +p_{\mathrm B}I&=0,
&&\text{in }\OB,\\*
-\nabla\cdot\sigma_{\mathrm B}+\frac{\mu}{K_B}u_{\mathrm B}
 &=f_{\mathrm B},
&&\text{in }\OB,\\*
\nabla\cdot u_{\mathrm B}&=0,
&&\text{in }\OB.
\end{align}
\end{subequations}
The Darcy region uses an independent flux instead of pseudo-stress. Eliminating $q_D$ from \cref{eq:original-darcy} gives the second-order pressure equation $-\nabla\cdot[(K_D/\mu)\nabla p_D]=f_D$; retaining $q_D$ gives the following first-order flux--pressure system.
The lower-region unknowns are Darcy flux \(q_D:\OD\to\mathbb R^2\) and
pressure \(p_D:\OD\to\mathbb R\):
\begin{subequations}
\label{eq:darcy}
\begin{align}
q_D+\frac{K_D}{\mu}\nabla p_D&=0,
&&\text{in }\OD,\\*
\nabla\cdot q_D&=f_D,
&&\text{in }\OD.
\end{align}
\end{subequations}
The scalar permeabilities \(K_B,K_D>0\) and viscosities
\(\mu,\mu_{\mathrm{eff}}>0\) are constant in the present study.

\subsection{Exterior and interface data}

Manufactured Dirichlet data are prescribed on the exterior:
\begin{equation}
u_{\mathrm S}=g_{\mathrm S}\quad\text{on }\GextS,\qquad
u_{\mathrm B}=g_{\mathrm B}\quad\text{on }\GextB,\qquad
p_D=g_D\quad\text{on }\GextD.
\label{eq:exterior-data}
\end{equation}
No Darcy pressure is prescribed on \(\GBD\); it is an unknown trace determined
through the coupling.

The selected Stokes--Brinkman conditions are continuity of velocity and
pseudo-traction,
\begin{subequations}
\label{eq:sb-interface}
\begin{align}
u_{\mathrm S}-u_{\mathrm B}&=0,
&&\text{on }\GSB,\\
\sigma_{\mathrm S}n-\sigma_{\mathrm B}n&=0,
&&\text{on }\GSB.
\end{align}
\end{subequations}
This selects the zero traction-jump member of the SBD model family.

At the Brinkman--Darcy interface, normal mass continuity and the BJS/normal
force balance are
\begin{subequations}
\label{eq:bd-interface}
\begin{align}
u_{\mathrm B}\cdot n-q_D\cdot n&=0,
&&\text{on }\GBD,\\
\sigma_{\mathrm B}n+\lambda_{\mathrm{BJS}}
(u_{\mathrm B}\cdot t)t+p_Dn-h_{BD}&=0,
&&\text{on }\GBD,
\end{align}
\end{subequations}
where
\begin{equation}
\lambda_{\mathrm{BJS}}
=\frac{\alpha\mu_{\mathrm{eff}}}{\sqrt{K_D}}.
\label{eq:bjs-coefficient}
\end{equation}
In global coordinates, an equivalent zero-residual form of
\cref{eq:bd-interface} is
\begin{equation}
(u_{\mathrm B})_y-(q_D)_y=0,\qquad
\sigma_{\mathrm B}n+
\begin{bmatrix}
\lambda_{\mathrm{BJS}}(u_{\mathrm B})_x\\
-p_D
\end{bmatrix}
-h_{BD}=0.
\label{eq:bd-global}
\end{equation}
Here \(h_{BD}=(h_{BD,x},h_{BD,y})^T\) is stored in the fixed global
\((x,y)\) basis; its tangential and normal components are
\(t^Th_{BD}\) and \(n^Th_{BD}\), respectively.  For each manufactured
example, the body forces and \(h_{BD}\) are generated by substituting the
chosen exact fields into
\cref{eq:stokes,eq:brinkman,eq:darcy,eq:bd-interface}.  They are not
independently tuned targets.

\subsection{Dimensionless model parameters and pressure anchoring}

The model is specified directly in dimensionless form, including its coordinates, fields, coefficients and sources. The nominal values in \cref{tab:physical-parameters} are prescribed model parameters rather than conversions of a particular dimensional dataset. The factors $1+\mu/K_B$ and $1+\lambda_{\mathrm{BJS}}$ introduced below provide numerical scaling of these dimensionless residuals.

\begin{table}[!htbp]\centering\small
\caption{Nominal MMS parameters; geometry is given in Figure~\ref{fig:geometry}.}
\label{tab:physical-parameters}
\begin{adjustbox}{max width=\linewidth}
\begin{tabular}{@{}cccccc@{}}\toprule
$\mu=\mu_{\mathrm{eff}}$ & $K_B=K_D$ & $\alpha$ & $\lambda_{\mathrm{BJS}}$ & $\mu/K_D$ & $(y_0,y_{BD},y_{SB},y_T)$\\\midrule
$1$ & $10^{-2}$ & $0.1$ & $1$ & $100$ & $(0,0.9,1.1,2)$\\
\bottomrule\end{tabular}
\end{adjustbox}\end{table}

The pressure levels are fixed by the coupled boundary-value problem rather
than by post-processing.  The exterior Darcy Dirichlet data anchor \(p_D\);
the normal component of the Brinkman--Darcy traction balance then anchors
\(p_{\mathrm B}\); and the Stokes--Brinkman traction balance anchors
\(p_{\mathrm S}\).  Consequently, the reported pressure errors are computed
without subtracting a fitted subdomain mean.  

\section{Hard-trace first-order residual method}\label{sec:method}
\Cref{fig:architecture} follows the construction from network outputs to constrained fields, residual training and pressure correction. We first define the mixed state and the field maps used at each interface.

\begin{figure}[p]\centering
\includegraphics[page=1,width=\linewidth]{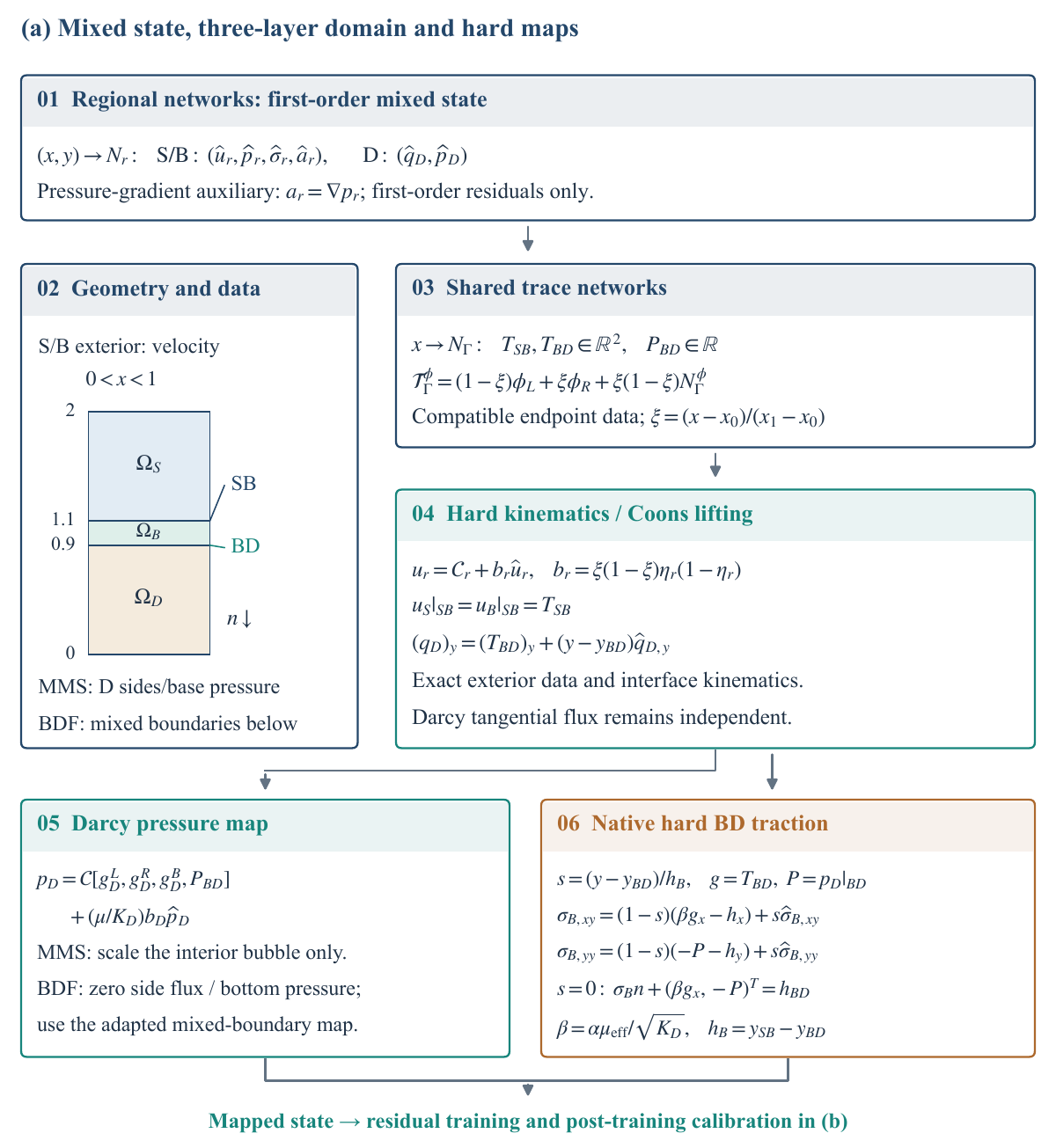}
\caption{Complete workflow (a): mixed state, shared traces and hard maps. Coons and Darcy pressure-scale formulas apply to MMS; BDF uses the mixed-boundary adaptation in Section~\ref{sec:bdf}.}\label{fig:architecture}
\end{figure}

\begin{figure}[p]\ContinuedFloat\centering
\includegraphics[page=2,width=\linewidth]{hard_trace_workflow.pdf}
\caption{Complete workflow (b): training, boundary-mass-consistent pressure correction and paired diagnostics. Raw/corrected share one trained model; the dashed path retains the raw state.}
\end{figure}
\subsection{First-order state and pressure auxiliaries}

\begin{equation}
z=\big(
u_{\mathrm S},p_{\mathrm S},\sigma_{\mathrm S},a_{\mathrm S},
u_{\mathrm B},p_{\mathrm B},\sigma_{\mathrm B},a_{\mathrm B},
q_D,p_D
\big),
\label{eq:state}
\end{equation}The upper networks output velocity, pressure, pseudo-stress and a pressure-gradient auxiliary; the Darcy network outputs flux and pressure. Velocities and Darcy fields enter the hard maps below. Upper pressure, stress and auxiliaries are raw outputs in the baseline; the hard-BD stress lifting is specified in \cref{subsec:hard-bd-map}. Expanded component lists are given in Supplement S1.
Eliminating pseudo-stress from the upper momentum equations gives

\begin{equation}
-\mu_r\Delta u_r+\nabla p_r
+\chi_r\frac{\mu}{K_B}u_r=f_r,
\qquad
\chi_{\mathrm S}=0,\quad\chi_{\mathrm B}=1.
\label{eq:upper-second-order}
\end{equation}
Because the viscosities and Brinkman resistance are constant and
\(\nabla\cdot u_r=0\), taking the divergence gives
\begin{equation}
\Delta p_r=\nabla\cdot f_r.
\label{eq:pressure-poisson}
\end{equation}
Introducing \(a_r=\nabla p_r\) converts this second-order scalar relation into
the first-order pair
\begin{subequations}
\label{eq:pressure-aux}
\begin{align}
a_r-\nabla p_r&=0,\\
\nabla\cdot a_r-\nabla\cdot f_r&=0.
\end{align}
\end{subequations}
The analytically generated \(\nabla\cdot f_r\) supplies a direct pressure-gradient signal after momentum scaling. This augmentation requires smooth fields, incompressibility, and constant isotropic coefficients; other cases require a new derivation. Our construction retains first-order differentiation; gPINNs instead penalize derivatives of PDE residuals \cite{YuEtAl2022Gradient}.

\subsection{Shared traces and hard field maps}

Let
\[
\xi=\frac{x-x_0}{x_1-x_0}\in[0,1].
\]
A scalar or vector trace on \(y=y_\Gamma\) interpolates the compatible exterior side data at its endpoints:

\begin{equation}
\mathcal T_\Gamma^\phi(x)=
(1-\xi)\phi_L(y_\Gamma)+\xi\phi_R(y_\Gamma)
+\xi(1-\xi)N_\Gamma^\phi(x).
\label{eq:compatible-trace}
\end{equation}
The neural correction vanishes at both endpoints for any network parameters. The three traces are

\begin{equation}
T_{SB}(x)\in\mathbb R^2,\qquad
T_{BD}(x)\in\mathbb R^2,\qquad
P_{BD}(x)\in\mathbb R.
\label{eq:three-traces}
\end{equation}
\(T_{SB}\) is the shared SB velocity; \(T_{BD}\) is the Brinkman bottom velocity, whose vertical component is shared with Darcy flux; \(P_{BD}\) is the Darcy top pressure. Their endpoints use the corresponding prescribed side data. Network sizes and initialization are specified in \cref{sec:experiments}.

Consider a generic rectangle
\([x_0,x_1]\times[y_b,y_t]\), set
\(\eta=(y-y_b)/(y_t-y_b)\), and let \(L(y),R(y),B(x),T(x)\) be compatible
data on its left, right, bottom, and top edges.  The Coons extension is
\begin{equation}\begin{aligned}
\mathcal C[L,R,B,T](x,y)
={}&(1-\xi)L(y)+\xi R(y)+(1-\eta)B(x)+\eta T(x)
\\
&-\big[
(1-\xi)(1-\eta)c_{00}
+\xi(1-\eta)c_{10}
\\
&\hspace{4em}
+(1-\xi)\eta c_{01}+\xi\eta c_{11}
\big],
\label{eq:coons}
\end{aligned}\end{equation}
where
\[
\begin{aligned}
c_{00}&=L(y_b)=B(x_0), & c_{10}&=R(y_b)=B(x_1),\\
c_{01}&=L(y_t)=T(x_0), & c_{11}&=R(y_t)=T(x_1).
\end{aligned}
\]
are the four common corner values.
The interior bubble
\begin{equation}
b(x,y)=\xi(1-\xi)\eta(1-\eta)
\label{eq:rectangular-bubble}
\end{equation}
vanishes on all four edges.

With \(\eta_{\mathrm S}=(y-y_{SB})/(y_T-y_{SB})\) and
\(\eta_{\mathrm B}=(y-y_{BD})/(y_{SB}-y_{BD})\), the physical velocities are
\begin{subequations}
\label{eq:upper-hard-velocities}
\begin{align}
u_{\mathrm S,\theta}
&=\mathcal C[
g_{\mathrm S}^{L},g_{\mathrm S}^{R},
T_{SB},g_{\mathrm S}^{T}]
+b_{\mathrm S}\widehat u_{\mathrm S},
\label{eq:hard-stokes-velocity}\\
u_{\mathrm B,\theta}
&=\mathcal C[
g_{\mathrm B}^{L},g_{\mathrm B}^{R},
T_{BD},T_{SB}]
+b_{\mathrm B}\widehat u_{\mathrm B}.
\label{eq:hard-brinkman-velocity}
\end{align}
\end{subequations}
The superscripts \(L,R,T\) identify exterior edges and \(b_r=\xi(1-\xi)\eta_r(1-\eta_r)\). The shared \(T_{SB}\) gives velocity continuity on the entire SB interface. 

The Darcy flux is mapped as
\begin{equation}
q_{D,\theta}(x,y)=
\begin{bmatrix}
\widehat q_{D,x}(x,y)\\
(T_{BD}(x))_y+(y-y_{BD})\widehat q_{D,y}(x,y)
\end{bmatrix}.
\label{eq:darcy-flux-lifting}
\end{equation}
Its vertical trace equals the Brinkman trace at BD; the tangential flux remains independent.

\subsection{Darcy pressure-scale separation}

Separate the unknown Darcy top pressure from its scaled interior correction:
\begin{equation}
P_{BD}(x)=
(1-\xi)g_D(x_0,y_{BD})+\xi g_D(x_1,y_{BD})
+\xi(1-\xi)N_{BD}^{p}(x).
\label{eq:darcy-pressure-trace}
\end{equation}
Here $b_D=\xi(1-\xi)\eta_D(1-\eta_D)$ and $\eta_D=(y-y_0)/(y_{BD}-y_0)$. The pressure lifting and physical field are
\begin{equation}
C_D^p=\mathcal C[
g_D^L,g_D^R,g_D^B,P_{BD}].
\label{eq:darcy-pressure-coons}
\end{equation}
\begin{equation}
p_{D,\theta}(x,y)
=C_D^p(x,y)+\frac{\mu}{K_D}\,b_D(x,y)\widehat p_D(x,y).
\label{eq:pressure-split}
\end{equation}
Only the interior bubble is multiplied by $\mu/K_D$, avoiding amplification of the random interface-trace initialization. This is an optimization parameterization, not an assumption that the physical trace is $O(1)$.

\subsection{Residual objective and implementation}\label{subsec:residual-objective}

For \(r\in\{\mathrm S,\mathrm B\}\), define
\begin{subequations}
\label{eq:upper-residuals}
\begin{align}
\R_r^\sigma
&=\sigma_r-\mu_r\nabla u_r+p_rI,
&
\R_r^c&=\nabla\cdot u_r,
\\
\R_r^m
&=-\nabla\cdot\sigma_r
+\chi_r\frac{\mu}{K_B}u_r-f_r,
&
\R_r^g&=a_r-\nabla p_r,
\\
\R_r^p
&=\nabla\cdot a_r-\nabla\cdot f_r.
\end{align}
\end{subequations}
The Brinkman momentum residual is normalized as
\begin{equation}
\widetilde{\R}_{\mathrm S}^{m}=\R_{\mathrm S}^{m},
\qquad
\widetilde{\R}_{\mathrm B}^{m}
=\frac{\R_{\mathrm B}^{m}}{1+\mu/K_B}.
\label{eq:momentum-scaling}
\end{equation}
The Darcy residuals are
\begin{equation}
\R_D^q=q_D+\frac{K_D}{\mu}\nabla p_D,\qquad
\R_D^c=\nabla\cdot q_D-f_D.
\label{eq:darcy-residuals}
\end{equation}

All four interface diagnostics are evaluated:
\begin{subequations}
\label{eq:interface-residuals}
\begin{align}
\begin{aligned}
\R_{SB}^{u}&=u_{\mathrm S}-u_{\mathrm B},\\
\R_{SB}^{t}&=\sigma_{\mathrm S}n-\sigma_{\mathrm B}n,
\end{aligned}\\
\begin{aligned}
\R_{BD}^{m}&=(u_{\mathrm B})_y-(q_D)_y
=-\big(u_{\mathrm B}\cdot n-q_D\cdot n\big),\\
\R_{BD}^{t}&=\sigma_{\mathrm B}n
+\lambda_{\mathrm{BJS}}(u_{\mathrm B}\cdot t)t+p_Dn-h_{BD}.
\end{aligned}
\end{align}
\end{subequations}
The kinematic groups vanish algebraically. For the soft-BD control, the traction residual uses tangential scaling

\begin{equation}
D_{BD}=\frac{1}{1+\lambda_{\mathrm{BJS}}}\,tt^T+nn^T,
\qquad
\widetilde{\R}_{BD}^{t}=D_{BD}\R_{BD}^{t}.
\label{eq:bd-traction-scaling}
\end{equation}
This leaves the BD normal-traction residual at unit weight; the bulk pressure mode is controlled separately in \cref{subsec:pressure-control}.

Exterior velocity residuals are $u_r-g_r$ and the Darcy pressure residual is $(p_D-g_D)/(\mu/K_D)$; all vanish for the hard map.
Their normalization is inactive for the hard exterior map.

Each residual group is averaged over both points and components. For \(R:G\to\mathbb R^{m_R}\), let \(\nu_G\) be normalized area or interface-arclength measure, or the prescribed mixture of edgewise measures for an exterior group. Define

\begin{equation}
\M_G(R)
=\frac{1}{m_R}\int_G |R(x)|_2^2\,d\nu_G(x).
\label{eq:continuous-group-mse}
\end{equation}
For a connected volume or interface, this is
\((m_R|G|)^{-1}\int_G|R|_2^2\).
For points \(X_G=\{x_i\}_{i=1}^{N_G}\), its implemented counterpart is
\begin{equation}
\MN_G(R)
=\frac{1}{m_RN_G}
\sum_{i=1}^{N_G}\sum_{j=1}^{m_R}R_j(x_i)^2.
\label{eq:empirical-group-mse}
\end{equation}
The factor \(1/m_R\) gives equal group coefficients after physical scaling. Exterior groups use the specified edge allocations, which need not be proportional to edge length; their residuals vanish for the hard model.

The continuous target represented by the complete code is
\begin{equation}\begin{aligned}
\mathcal J(z)={}&
\sum_{r\in\{\mathrm S,\mathrm B\}}
\big[
\M_{\Omega_r}(\R_r^\sigma)
+\M_{\Omega_r}(\widetilde{\R}_r^m)
+\M_{\Omega_r}(\R_r^c)
\\
&\hspace{7.0em}
+\M_{\Omega_r}(\R_r^g)
+\M_{\Omega_r}(\R_r^p)
\big]
\\
&+\M_{\OD}(\R_D^q)
+\M_{\OD}(\R_D^c)
+\M_{\GSB}(\R_{SB}^{t})
+\M_{\GBD}(\widetilde{\R}_{BD}^{t})
\\
&+\M_{\GextS}(\R_{\mathrm S}^{\mathrm{ext}})
+\M_{\GextB}(\R_{\mathrm B}^{\mathrm{ext}})
+\M_{\GextD}(\R_D^{\mathrm{ext}}).
\label{eq:continuous-loss}
\end{aligned}\end{equation}
The training objective \(J_N(\theta)\) replaces \(\M_G\) by \(\MN_G\) on fixed Sobol points. The scalings in \cref{eq:momentum-scaling,eq:bd-traction-scaling} are fixed weights. The identically zero kinematic groups are recorded separately; exterior groups remain in the objective but vanish. Thus loss magnitudes are not directly comparable across formulations.

\subsection{Hard BD traction map}\label{subsec:hard-bd-map}
Let $s=(y-y_{BD})/h_B$, $g=T_{BD}$, $P=P_{BD}$ and $\beta=\lambda_{\mathrm{BJS}}=\alpha\mu_{\mathrm{eff}}/\sqrt{K_D}$. For the Cartesian components $(h_x,h_y)$ of the prescribed BD load, set
\begin{equation}
\begin{aligned}
 \sigma_{B,xy}&=(1-s)(\beta g_x-h_x)+s\widehat\sigma_{B,xy},\\
 \sigma_{B,yy}&=(1-s)(-P-h_y)+s\widehat\sigma_{B,yy}.
\end{aligned}
\label{eq:extreme-hard-traction}
\end{equation}
At $s=0$, $n=(0,-1)$ gives $\sigma_Bn+(\beta g_x,-P)^T=h_{BD}$ identically; the original stress output remains free at $s=1$. Other stress components are unchanged. The traces $g$ and $P$ are learned; for the reported parameter settings, the prescribed MMS1 and MMS2 loads are zero and $(-0.8,0)^T$, respectively. Native BD traction is therefore exact, while traction reconstructed from $(u,p)$ still requires an independent check. This map is used in the final complete-method validation configuration; baseline kinematic hard traces retain soft BD traction, while the full BDF method uses the mixed-boundary-adapted hard BD map in Section~\ref{sec:bdf}.

\subsection{Consistent low-order pressure correction}\label{subsec:pressure-control}
Momentum scaling preserves consistency but weakly controls some pressure directions. With $h_B=y_{SB}-y_{BD}$, keep velocities and Darcy fields fixed and set $\delta p_B=c(y-y_{BD})$, $\delta p_S=ch_B$, $\delta\sigma_r=-\delta p_rI$, and $\delta a_B=ce_y$. This direction preserves interface, constitutive and auxiliary-consistency residuals, adding only $ce_y$ to Brinkman momentum. Starting from an exact solution, its scaled loss is $c^2/[2(1+\mu/K_B)^2]$. \Cref{fig:pressure-mechanism}(a,b) shows the pressure shape and this analytical sensitivity: as $K_B$ decreases, a fixed pressure perturbation contributes progressively less to the scaled objective. A small scaled loss alone therefore does not exclude this pressure error.

To control this direction, integrated incompressibility and prescribed top/side velocities determine the Brinkman mean $\bar u_{B,y}^{\mathrm{bc}}$ (formula and proof in Supplement S6). With brackets denoting volume averages, define the physical moment
\begin{equation}
 M_B(z)=\left\langle-(\nabla\!\cdot\sigma_B)_y-f_{B,y}\right\rangle_{\Omega_B}
       +\frac{\mu}{K_B}\bar u_{B,y}^{\mathrm{bc}},\qquad c=-M_B(z).
\label{eq:pressure-control-moment}
\end{equation}
After training, pressure, stress and auxiliaries are updated consistently along the direction above. Since $M_B(z+\delta z)=M_B(z)+c$, the correction controls this pressure direction without exact-pressure labels. Velocities, flux, Darcy pressure, BD tractions and SB traction jumps remain unchanged for both native and reconstructed stress; the two SB tractions shift equally. Fixed order-32 Gauss--Legendre integration determines the coefficient; order 48 checks it independently.

The corrected approximation is a neural field plus a known low-order basis, distinguished from the raw network output. Boundary mass balance supplies $\bar u_{B,y}^{\mathrm{bc}}$, not interior velocity labels; for an approximate field, $M_B$ differs from its raw mean momentum residual. This correction controls a selected integral pressure direction; its relation to local momentum residuals is detailed in Supplement S6. \Cref{subsec:spatial} evaluates pressure recovery using \cref{fig:pressure-mechanism}, with repeated-run permeability results in \cref{subsec:coverage}.

\subsection{Construction properties and analytical scope}

\begin{proposition}[Algebraic admissibility]
\label{prop:admissibility}
Assume that prescribed edge data agree at every corner of the three-rectangle
partition.  For every finite parameter vector, the fields in
\cref{eq:hard-stokes-velocity,eq:hard-brinkman-velocity,%
eq:darcy-flux-lifting,eq:pressure-split}
satisfy \cref{eq:exterior-data},
\(u_{\mathrm S}=u_{\mathrm B}\) on \(\GSB\), and
\((u_{\mathrm B})_y=(q_D)_y\) on \(\GBD\).
\end{proposition}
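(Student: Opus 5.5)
The plan is to verify each claim by direct evaluation of the maps on the relevant edges. Everything rests on two elementary facts: the Coons extension \cref{eq:coons} reproduces its edge data, and the bubble \cref{eq:rectangular-bubble} vanishes on the boundary.

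\textbf{Step 1: the Coons extension reproduces compatible edge data.} I first record that for compatible data \(L,R,B,T\), the function \(\mathcal C[L,R,B,T]\) equals \(L\) at \(\xi=0\), \(R\) at \(\xi=1\), \(B\) at \(\eta=0\) and \(T\) at \(\eta=1\). For instance, at \(\xi=0\) the formula reduces to
\[
L(y)+(1-\eta)B(x_0)+\eta T(x_0)-(1-\eta)c_{00}-\eta c_{01}=L(y),
\]
using \(c_{00}=B(x_0)\) and \(c_{01}=T(x_0)\). The other three edges follow in the same way. Since \(b_r\) and \(b_D\) vanish on all four edges, the bubble terms add nothing on the boundary. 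The outputs \(\widehat u_r\) and \(\widehat p_D\) are finite for finite parameters, so the factors \(b_r\widehat u_r\) and \((\mu/K_D)b_D\widehat p_D\) are zero there.

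\textbf{Step 2: the compatibility hypothesis holds for every parameter value.} This is the main point needing care. The traces \(T_{SB}\), \(T_{BD}\) and \(P_{BD}\) are network-dependent, so the corner relations \(c_{ij}\) must hold for every \(\theta\), not just at the exact solution. By \cref{eq:compatible-trace,eq:darcy-pressure-trace}, the term \(\xi(1-\xi)N\) vanishes at \(\xi=0,1\). Hence \(T_{SB}(x_0)=g^L(y_{SB})\), \(T_{SB}(x_1)=g^R(y_{SB})\), and similarly for \(T_{BD}\) and \(P_{BD}\). These endpoint values are the side data of the adjacent rectangles. Combining this with the assumption that prescribed exterior edge data agree at the exterior corners gives all corner identities for the Stokes, Brinkman and Darcy pressure Coons patches. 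At the SB endpoints, both \(g_{\mathrm S}^{L}\) and \(g_{\mathrm B}^{L}\) must equal the same corner value. This is exactly the stated corner-agreement hypothesis.

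\textbf{Step 3: conclude each claim.} With Steps 1 and 2 in hand:
\begin{itemize}
\item Evaluating \cref{eq:hard-stokes-velocity} on the left, right and top edges gives \(g_{\mathrm S}\). Evaluating \cref{eq:hard-brinkman-velocity} on the side edges gives \(g_{\mathrm B}\).
\item Evaluating \cref{eq:pressure-split} on the left, right and bottom edges gives \(g_D\). This establishes \cref{eq:exterior-data}.
\item On \(\GSB\) we have \(\eta_{\mathrm S}=0\) and \(\eta_{\mathrm B}=1\), so both upper velocities reduce to the same \(T_{SB}(x)\). This gives \(u_{\mathrm S}=u_{\mathrm B}\).
\item On \(\GBD\) we have \(\eta_{\mathrm B}=0\), so \(u_{\mathrm B}=T_{BD}\). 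Meanwhile \cref{eq:darcy-flux-lifting} at \(y=y_{BD}\) gives \((q_D)_y=(T_{BD})_y\), since the factor \((y-y_{BD})\) kills the network term. Hence \((u_{\mathrm B})_y=(q_D)_y\).
\end{itemize}
All of these are algebraic identities valid for every finite \(\theta\), which completes the argument.
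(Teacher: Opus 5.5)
Your proof is correct and follows the same route as the paper's. The trace corrections vanish at the endpoints, and each Coons patch reproduces its assigned edge data while the bubbles vanish. The shared \(T_{SB}\) gives SB continuity, and the factor \((y-y_{BD})\) in the Darcy flux lifting gives \((q_D)_y=(T_{BD})_y=(u_{\mathrm B})_y\) on \(\GBD\). Your Step~2 makes explicit the parameter-independent corner compatibility of the learned traces, which the paper's brief proof leaves implicit.
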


\begin{proof}
The correction in \cref{eq:compatible-trace} vanishes at the endpoints. On each edge, the bubble vanishes and the Coons patch equals its assigned trace. The shared \(T_{SB}\) enforces SB continuity; \cref{eq:darcy-flux-lifting} gives \((q_D)_y=(T_{BD})_y=(u_{\mathrm B})_y\) at BD.
\end{proof}

\begin{proposition}[Continuous consistency of the augmentation]
\label{prop:consistency}
Let a sufficiently smooth solution of
\cref{eq:stokes,eq:brinkman,eq:darcy,eq:exterior-data,%
eq:sb-interface,eq:bd-interface}
satisfy the stated constant-coefficient assumptions, and set
\(a_r=\nabla p_r\).  Then every term in
\(\mathcal J(z)\) vanishes and \(\mathcal J(z)=0\).
\end{proposition}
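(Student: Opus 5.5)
The plan is to go through the objective \(\mathcal J(z)\) in \cref{eq:continuous-loss} group by group and show that each residual vanishes pointwise on its set \(G\). Once that is done, each \(\M_G\) in \cref{eq:continuous-group-mse} is the integral of a nonnegative function that is identically zero, so it vanishes, and the fixed scalings in \cref{eq:momentum-scaling,eq:bd-traction-scaling} preserve this. The residuals fall into four groups: the mixed constitutive, momentum and continuity residuals; the pressure-auxiliary residuals; the interface tractions; and the exterior data.

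\emph{Bulk residuals.} With \(\sigma_r\) given by \cref{eq:pseudostress}, \(\R_r^\sigma\) vanishes by definition. \(\R_r^m\) and \(\R_r^c\) vanish because the solution satisfies \cref{eq:stokes,eq:brinkman}. In the Darcy region, \(\R_D^q\) and \(\R_D^c\) vanish by \cref{eq:darcy}. Scaling by \(1/(1+\mu/K_B)\) or by \(D_{BD}\) multiplies a zero vector by a fixed matrix or scalar, so the scaled residuals also vanish.

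\emph{Auxiliary residuals.} \(\R_r^g=a_r-\nabla p_r=0\) by the choice \(a_r=\nabla p_r\). For \(\R_r^p\) I would reproduce the derivation of \cref{eq:pressure-poisson} rigorously, and I expect this to be the main, though modest, obstacle. Smoothness (at least \(C^3\) velocity and \(C^2\) pressure in each open subdomain) lets me substitute \cref{eq:pseudostress-divergence} and obtain \cref{eq:upper-second-order}. Taking the divergence and using constant \(\mu_r\) and \(\mu/K_B\), the derivatives commute: \(\nabla\cdot\Delta u_r=\Delta(\nabla\cdot u_r)=0\) and \(\nabla\cdot(\chi_r\tfrac{\mu}{K_B}u_r)=\chi_r\tfrac{\mu}{K_B}\nabla\cdot u_r=0\). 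This leaves \(\Delta p_r=\nabla\cdot f_r\), that is, \(\nabla\cdot a_r-\nabla\cdot f_r=0\). These are exactly the points where the constant-coefficient and incompressibility hypotheses enter, so I would state them explicitly.

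\emph{Interface and exterior residuals.} \(\R_{SB}^t\) vanishes by \cref{eq:sb-interface}, and \(\R_{BD}^t\) by the second line of \cref{eq:bd-interface}. The recorded kinematic groups \(\R_{SB}^u\) and \(\R_{BD}^m\) also vanish, by the first lines of these conditions, although they are not part of \(\mathcal J\). The exterior residuals \(u_r-g_r\) and \((p_D-g_D)/(\mu/K_D)\) vanish by \cref{eq:exterior-data}. Here "sufficiently smooth" must include continuity up to the boundary, so that the traces are defined pointwise.

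Summing the vanishing groups gives \(\mathcal J(z)=0\). The edge-mixture measures for the exterior groups play no role, since each integrand is zero.
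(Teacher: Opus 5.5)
Your proposal is correct and follows the paper's own argument: substituting the solution cancels the bulk and traction residuals, and \(a_r=\nabla p_r\) together with \cref{eq:pressure-poisson} cancels the auxiliary pair. The paper instead handles the exterior and kinematic groups by citing \cref{prop:admissibility}. Your derivation of them directly from \cref{eq:exterior-data} and the first lines of \cref{eq:sb-interface,eq:bd-interface} is the more direct justification for an exact solution, since that proposition concerns the trial fields. Your explicit smoothness and incompressibility bookkeeping for \(\R_r^p\) fills in what the paper leaves implicit.
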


\begin{proof}
Substitution cancels the governing and interface residuals; \(a_r=\nabla p_r\) and \cref{eq:pressure-poisson} cancel the pressure auxiliaries. The hard identities follow from \cref{prop:admissibility}.
\end{proof}

These propositions establish algebraic admissibility and continuous consistency. A coercivity bound for the scaled residual operator yields conditional continuous-error control; the spaces, trace regularity and derivation are retained in Supplement S1. This conditional interpretation does not establish permeability-uniform stability, finite-sample error control or optimizer convergence. The construction uses rectangular interfaces and compatible corners.

\section{Experimental settings and evaluation protocol}\label{sec:experiments}
\subsection{Comparison configurations}
\Cref{tab:model-comparison} defines the baseline and component configurations. The complete method uses the hard-BD configuration followed by pressure correction. Throughout the experiments, \emph{raw} and \emph{corrected} denote the states before and after correction of one trained checkpoint.
\begin{table}[!htbp]\centering\footnotesize\setlength{\tabcolsep}{3pt}
\caption{Configuration definitions. Kinematic constraints include exterior data and interface sharing; the released control retains hard exterior data. BDF uses the mixed-boundary map and configuration in Section~\ref{sec:bdf}.}\label{tab:model-comparison}
\begin{adjustbox}{max width=\linewidth}
\begin{tabular}{@{}lccccr@{}}\toprule
Configuration & First order & Kinematics & Hard BD & Correction & Parameters\\\midrule
PINN baseline & No & Soft & No & None & 38,471\\
Soft first order & Yes & Soft & No & None & 39,121\\
Kinematic hard trace & Yes & Hard & No & Raw/corrected & 46,074\\
Hard BD & Yes & Hard & Yes & Raw/corrected & 46,074\\
Released interface & Yes & Exterior only & Yes & Raw/corrected & 46,074\\
\bottomrule
\end{tabular}
\end{adjustbox}\end{table}

We compare PINN, soft first order and kinematic hard traces using five seeds per configuration and manufactured problem. The complete method is tested in six MMS--permeability combinations: MMS1 and MMS2 at $K_B=10^{-2},10^{-4},10^{-6}$, with fixed $K_D=10^{-2}$ and three seeds per combination. Those seeds are reused across these combinations, so the 18 runs are not 18 independent replicates. Paired controls change either BD traction enforcement or interface sharing. BDF uses four validation runs, with its soft-BD control in Supplement S4. Supplement S8 reports correction results for 28 MMS benchmark and component checkpoints; seed identifiers and pairings are listed in Supplement S2.

\subsection{Networks, sampling and budgets}
All configurations use regional neural networks and fixed scrambled Sobol collocation. Hard-trace configurations additionally use interface networks. Architecture, initialization, point allocation and seed identifiers are specified in Supplement S2. Shared sampling controls comparison conditions but is not claimed optimal\cite{WuEtAl2023Sampling}.

Benchmark comparisons match nominal optimizer budgets within each case, with the hard-trace MMS2 run resetting L-BFGS history between two blocks. Complete-method and paired controls use their prespecified training schedules. BDF uses a separate three-block schedule with one fixed physical objective throughout. Exact optimizer settings and the earlier soft-BD staged protocol are given in Supplements S2, S4, S6 and S7.

Computational environments are listed by experimental batch in the environment table of Supplement S2; timing comparisons are restricted to the same hardware batch.

Baseline comparisons match nominal total budgets, not parameter count, optimization path or elapsed time. PINN reconstructs stress/flux and requires second derivatives; soft first order predicts stress/flux independently. Both baselines use unscaled unit-weight residuals and whole-field Darcy pressure scaling $\mu/K_D$. Kinematic hard traces add pressure auxiliaries, Darcy pressure separation and selected Brinkman-momentum/BD-tangential-traction scaling. Reporting these differences makes the scope of the baseline comparison explicit \cite{McGreivyHakim2024Baselines}. Whole-configuration gains cannot be assigned to any one component. 

\subsection{Manufactured solutions}
\label{subsec:manufactured-solutions}

Manufactured solutions make every source, boundary value, interface datum,
and field error available in closed form~\cite{Roache2002}.  In both examples,
the exact pseudo-stresses are defined by \cref{eq:pseudostress}, and all body
forces, divergence data, exterior values, and interface right-hand sides are
generated by substituting the fields below into the continuous equations.
They are not fitted independently.

\subsubsection{MMS1: exponential single-frequency field}

\noindent MMS1 adopts the exponential analytical fields of Ruan and Rybak~\cite{RuanRybak2026}, with source and interface data evaluated under the conventions of \cref{sec:model}.

Set \(\omega=1\), \(\phi=0\), and
\(\vartheta(x)=\omega x+\phi\).  The Stokes and Brinkman fields are the
restrictions of the same smooth upper solution,
\begin{subequations}
\label{eq:mms1-upper}
\begin{align}
u_{\mathrm S}^{\star}(x,y)=u_{\mathrm B}^{\star}(x,y)
&=
\begin{bmatrix}
\cos\vartheta(x)\\
\sin\vartheta(x)
\end{bmatrix}
\exp\!\bigl(\omega(y-y_{BD})\bigr),
\label{eq:mms1-velocity}\\
p_{\mathrm S}^{\star}(x,y)=p_{\mathrm B}^{\star}(x,y)
&=\sin\!\bigl(\omega(x+y-y_{BD})+\phi\bigr).
\label{eq:mms1-upper-pressure}
\end{align}
\end{subequations}
The Darcy pressure and flux are
\begin{subequations}
\label{eq:mms1-darcy}
\begin{align}
p_D^{\star}(x,y)
&=-\frac{\mu}{K_D}(y-y_{BD})\sin\vartheta(x),
\label{eq:mms1-darcy-pressure}\\
q_D^{\star}(x,y)
&=
\begin{bmatrix}
\omega(y-y_{BD})\cos\vartheta(x)\\
\sin\vartheta(x)
\end{bmatrix}
=-\frac{K_D}{\mu}\nabla p_D^{\star}(x,y).
\label{eq:mms1-darcy-flux}
\end{align}
\end{subequations}
The reference level in these formulas is specifically the
Darcy--Brinkman interface \(y_{BD}=0.9\), rather than an unspecified generic
vertical origin.

\subsubsection{MMS2: polynomial field with two-component interface flow}

MMS2 is a polynomial manufactured solution constructed in this work to test a different field structure and a nonzero linear Darcy interface-pressure trace.

Let \(\eta=y-y_{BD}\).  Again the exact Stokes and Brinkman fields are common:
\begin{subequations}
\label{eq:mms2-upper}
\begin{align}
u_{\mathrm S}^{\star}(x,y)=u_{\mathrm B}^{\star}(x,y)
&=
\begin{bmatrix}
0.20+\eta+\eta^2\\
x^2-x
\end{bmatrix},
\label{eq:mms2-velocity}\\
p_{\mathrm S}^{\star}(x,y)=p_{\mathrm B}^{\star}(x,y)
&=2\mu x+0.30\eta+0.20x\eta.
\label{eq:mms2-upper-pressure}
\end{align}
\end{subequations}
The exact Darcy fields are
\begin{subequations}
\label{eq:mms2-darcy}
\begin{align}
p_D^{\star}(x,y)
&=2\mu x+\frac{\mu}{K_D}
\left[x(1-x)\eta+\frac{\eta^3}{3}\right],
\label{eq:mms2-darcy-pressure}\\
q_D^{\star}(x,y)
&=
\begin{bmatrix}
-2K_D-(1-2x)\eta\\
-\bigl[x(1-x)+\eta^2\bigr]
\end{bmatrix}
=-\frac{K_D}{\mu}\nabla p_D^{\star}(x,y).
\label{eq:mms2-darcy-flux}
\end{align}
\end{subequations}
At \(\GBD\), both tangential Brinkman velocity and normal through-flow are
nonzero.  With \cref{tab:physical-parameters}, direct substitution gives
\(h_{BD}=(-0.8,0)^{\mathsf T}\).  Independent analytic tests of the governing and
interface equations give maximum absolute residuals of approximately
\(2.22\times10^{-16}\) for both manufactured examples.

\subsection{Independent evaluation and acceptance}

\label{subsec:independent-evaluation}\label{subsec:acceptance-rule}
All 30 baseline checkpoints are evaluated without retraining on an independent $81\times61$ tensor grid per region and 401 points per interface, using composite trapezoidal quadrature. Let $Q_r$ denote regional quadrature, $e_v=v_\theta-v^\star$, and $X_r$ the evaluation grid. Vector and gradient magnitudes are Euclidean and Frobenius, respectively. The reported errors are
\begin{equation}
E_{L^j,r}(v)
=\frac{Q_r[|e_v|^j]^{1/j}}
{Q_r[|v^{\star}|^j]^{1/j}},
\qquad j\in\{1,2\},
\label{eq:relative-lp-errors}
\end{equation}

\begin{equation}
E_{H^1,r}(v)
=\frac{\bigl(Q_r[|e_v|^2]+Q_r[|\nabla e_v|_F^2]\bigr)^{1/2}}
{\bigl(Q_r[|v^{\star}|^2]+Q_r[|\nabla v^{\star}|_F^2]\bigr)^{1/2}}.
\label{eq:relative-h1-error}
\end{equation}
Sampled maximum and Darcy $H(\mathrm{div})$ errors, including their normalizations, are defined in Supplement S2. 

The sampled $L_h^\infty$ is not a certified continuum maximum. No pressure mean is removed. Define $E_u=\max_{r=S,B}E_{L^2,r}(u_r)$, $E_p=\max_{r=S,B,D}E_{L^2,r}(p_r)$, and $E_q=E_{L^2,D}(q_D)$. Maxima are taken within each checkpoint before averaging seeds. Interface RMS is $(Q_\Gamma[|R|_2^2]/|\Gamma|)^{1/2}$ in physical, unscaled units.
\begin{table}[!htbp]\centering\small
\caption{Prespecified baseline acceptance criteria; interface RMS uses unscaled nondimensional physical quantities.}
\label{tab:acceptance-rule}
\begin{adjustbox}{max width=\linewidth}
\begin{tabular}{@{}lll@{}}\toprule
Quantity & Strict upper bounds & Applies to\\\midrule
Fields $E_u;\ E_p;\ E_q$ & $0.05;\ 0.10;\ 0.10$ & All methods\\
SB/BD traction vector RMS (each) & $0.10$ & All methods\\
SB velocity/BD mass RMS (each) & $0.05$ & Both soft baselines\\
Each hard exterior/kinematic identity RMS & $10^{-12}$ & Hard trace\\
\bottomrule\end{tabular}
\end{adjustbox}\end{table}

A checkpoint passes only when every applicable strict acceptance criterion holds. Baseline traction criteria use reconstructed stress for PINN and independent stress for first-order models; reconstructed first-order tractions are additionally checked, without changing the original criterion. Mean and sample standard deviation are computed over all five checkpoints, with denominator $n-1$; the pass count is checkpoint-based. The criteria are study-specific, not universal accuracy guarantees. Full quadrature and aggregation definitions are in Supplement S2. The extended MMS and BDF acceptance families, including their componentwise versus vector traction conventions, are listed in Supplement S2.

\section{Results and validation}\label{sec:results}
\FloatBarrier
\subsection{Experimental system and overall accuracy}\label{subsec:baseline-results}
We first compare flow fields and regional errors at $K_B=K_D=10^{-2}$. Subsequent tests vary Brinkman permeability, isolate individual components and examine boundary-driven filtration. Configuration definitions and sample sizes are given in \cref{sec:experiments}.

In \cref{fig:flow-fields}, the numerical solutions recover the principal flow directions and magnitude variations of both MMS1 and MMS2 across the three regions. Here $w=u$ in Stokes/Brinkman and $w=q_D$ in Darcy. The shared magnitude scale allows direct comparison with the exact fields, while the error maps locate the remaining discrepancies. Both cases use the fixed validation seed specified in Supplement S2.

\begin{figure}[!htbp]\centering
\includegraphics[width=\linewidth]{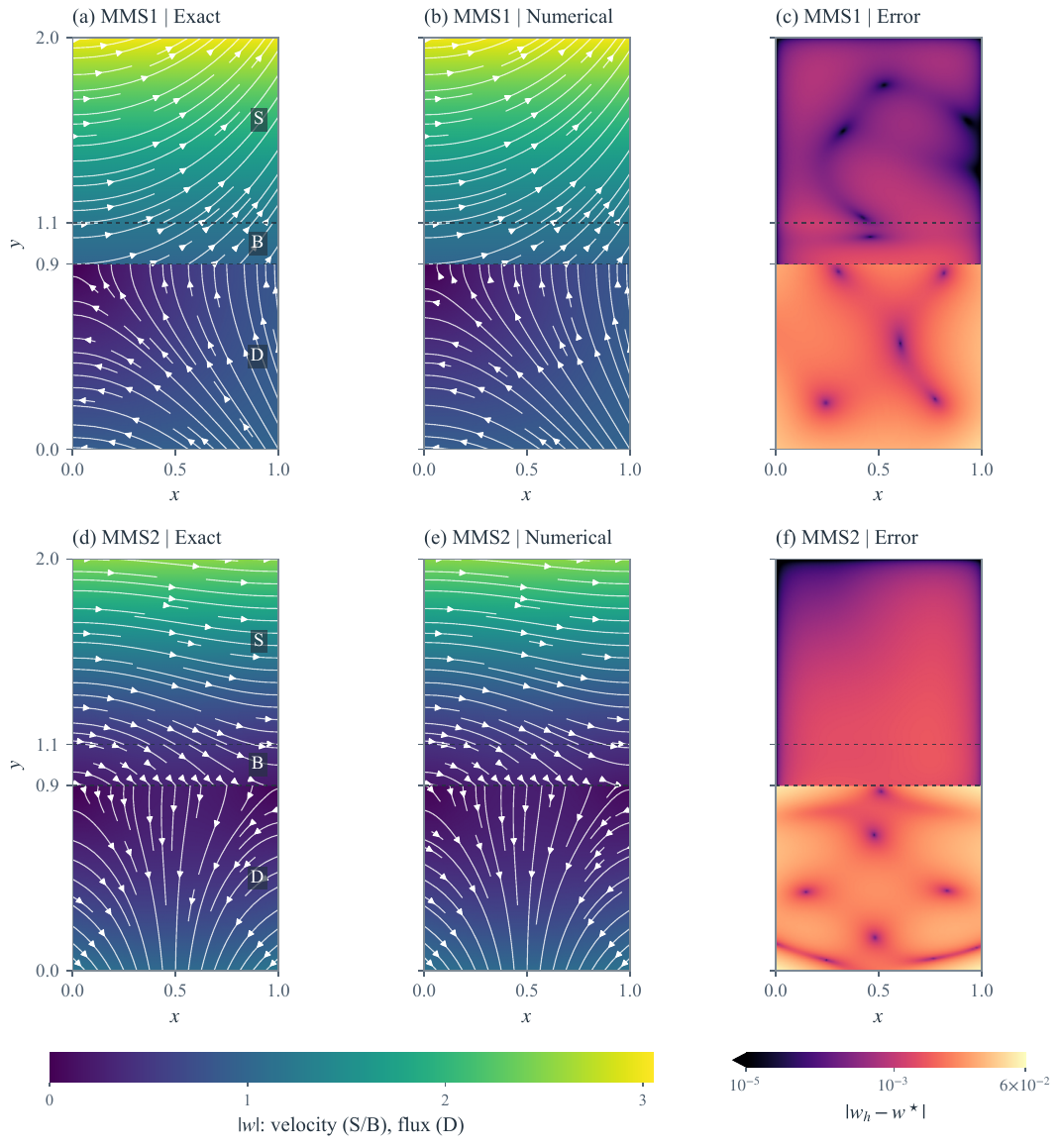}
\caption{Exact and numerical flow fields for MMS1 (a--c) and MMS2 (d--f), $K_B=K_D=10^{-2}$. Each row shows exact and numerical magnitudes with streamlines, followed by vector error $\lVert w_h-w^\star\rVert_2$, where $w=u$ in S/B and $w=q_D$ in D. Magnitudes share a linear scale; errors share a logarithmic scale floored at $10^{-5}$. Dashed lines mark SB and BD. Streamlines are computed regionwise, without joining tangential velocities across BD.}\label{fig:flow-fields}
\end{figure}

Across all six fields in \cref{fig:overall-accuracy}, kinematic hard traces give lower benchmark mean errors than both soft baselines for both test cases. The lower panels report the corrected complete method over three validation seeds. The two rows use distinct protocols (\cref{sec:experiments}); component effects are assessed by the paired controls below.

\begin{table}[!htbp]
\centering
\caption{Benchmark comparison group: mean $\pm$ sample SD over five seeds per method and case; errors in percent. Velocity and pressure maxima are taken across regions within each trained model before aggregation. Nominal budgets are matched, but parameter counts, scaling and optimization paths differ.}\label{tab:main-audit}
\footnotesize\setlength{\tabcolsep}{4pt}
\begin{adjustbox}{max width=\linewidth}
\begin{tabular}{@{}llrrrr@{}}
\toprule
Case & Method & Velocity & Pressure & Darcy flux & Passes\\
\midrule
\multirow{3}{*}{MMS1}
& PINN & \(8.584\pm1.106\) & \(32.016\pm13.532\) & \(2.186\pm0.392\) & 0/5\\
& Soft first-order & \(6.564\pm0.616\) & \(50.806\pm18.388\) & \(12.704\pm2.505\) & 0/5\\
& {Kinematic/raw} & \({0.223\pm0.081}\) & \({4.331\pm1.660}\)
& \({1.219\pm0.176}\) & {5/5}\\
\midrule
\multirow{3}{*}{MMS2}
& PINN & \(7.148\pm1.261\) & \(84.240\pm21.037\) & \(3.800\pm0.654\) & 0/5\\
& Soft first-order & \(6.068\pm0.857\) & \(67.379\pm9.135\) & \(22.597\pm1.303\) & 0/5\\
& {Kinematic/raw} & \({0.501\pm0.078}\) & \({7.381\pm1.099}\)
& \({1.603\pm0.109}\) & {5/5}\\
\bottomrule
\end{tabular}
\end{adjustbox}
\end{table}

Kinematic hard traces pass all joint acceptance criteria in 5/5 seeds for both cases; both soft baselines pass 0/5. For both MMS cases, the kinematic-hard-trace model has lower five-seed mean errors than both baselines for all six fields in the $L^1$, $L^2$, sampled $L_h^\infty$ and full $H^1$ norms, and for the Darcy flux in $H(\mathrm{div})$. Full norms and correction controls for benchmark checkpoints are retained in Supplements S2 and S8.

\begin{figure}[!htbp]\centering
\includegraphics[width=\linewidth]{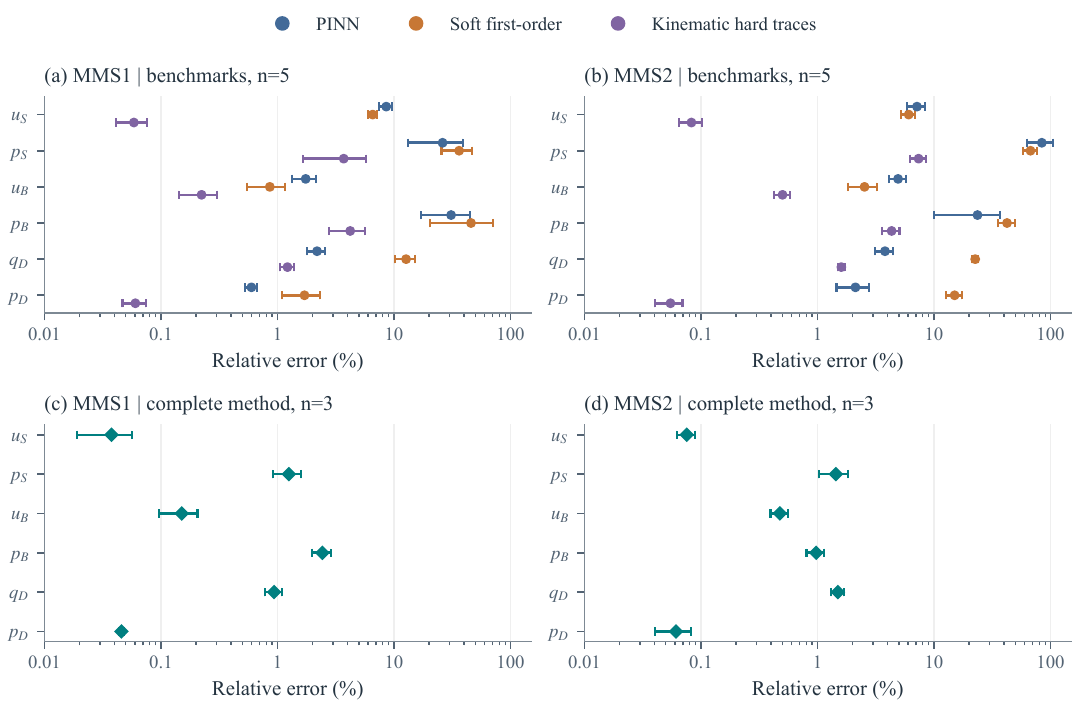}
\caption{Six-field accuracy at $K_B=K_D=10^{-2}$. Upper row: three benchmark methods, five seeds each; lower row: corrected complete method, three validation seeds; columns: MMS1 and MMS2. Points and intervals are means and sample SDs on a common logarithmic axis. Rows compare distinct configurations, with protocols in Section~\ref{sec:experiments}.}\label{fig:overall-accuracy}
\end{figure}

\subsection{Spatial pressure recovery and interface consistency}\label{subsec:spatial}
At $K_B=10^{-6}$, the scaled objective is weakly sensitive to the pressure direction in \cref{fig:pressure-mechanism}(a,b). For the MMS2 checkpoint in panels (c,d), the zero of the physical moment gives $c_*\approx0.707$. Applying this correction reduces Stokes- and Brinkman-pressure relative $L^2$ errors from 10.15\% and 6.39\% to 0.77\% and 1.29\%, respectively. Exact pressure is used only to evaluate these errors; the coefficient is determined from the moment. The diagnostic and run-selection protocol are given in Supplement S2.

In \cref{fig:pressure-contours}, corrected pressure contours lie closer to the exact contours over much of both upper regions. The adjustment is constant in Stokes and linear in $y$ in Brinkman, consistent with the direction derived in \cref{subsec:pressure-control}. Each raw/corrected pair uses the same contour levels.

\begin{figure}[p]\centering
\captionsetup{font=small,skip=5pt}
\includegraphics[width=0.98\linewidth]{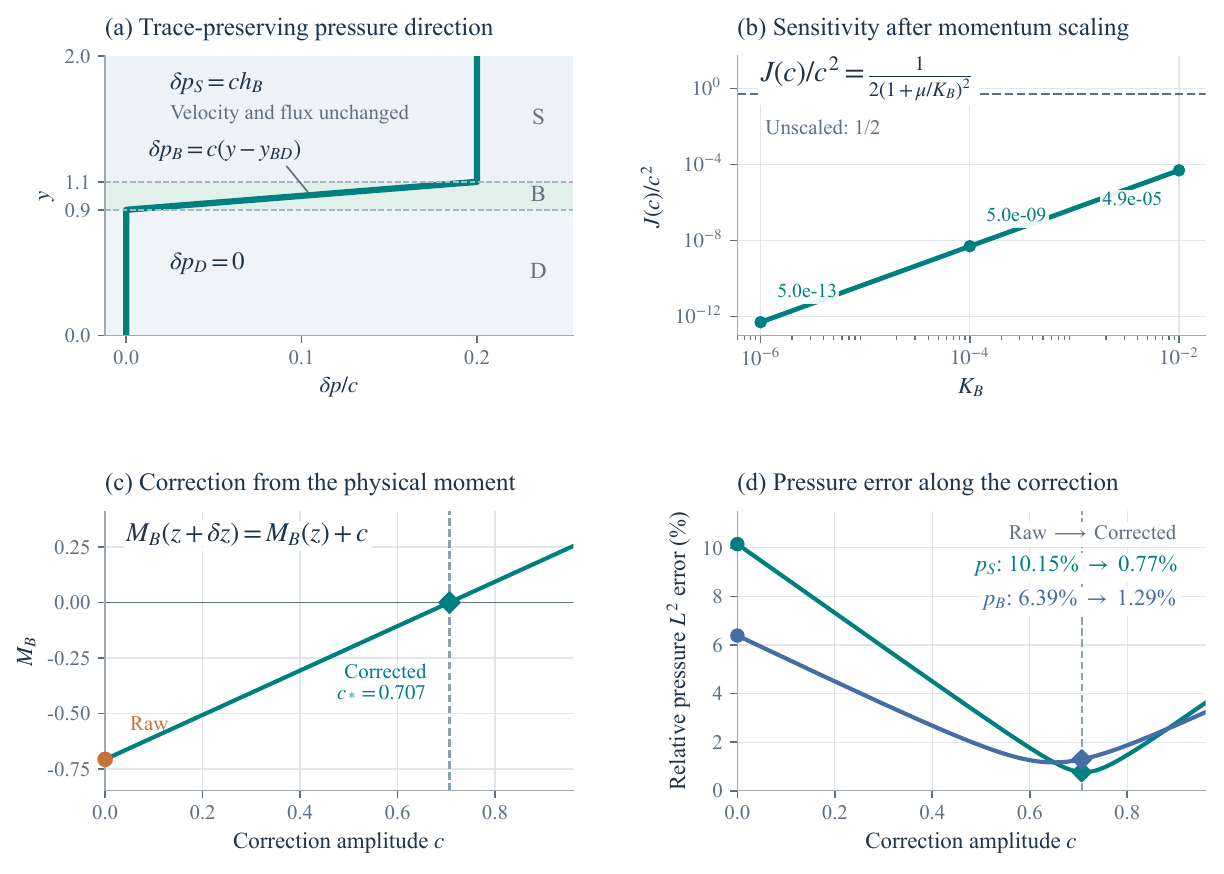}
\caption{Pressure correction. (a,b) Pressure direction and scaled loss coefficient, $\mu=1$; the dashed coefficient is $1/2$ without scaling. (c,d) Physical moment and pressure errors for MMS2 at $K_B=10^{-6}$, $K_D=10^{-2}$. Order 32 determines $c_*$; order 48 checks the moment. Diamonds mark $c_*$; exact pressure is used only for error evaluation.}\label{fig:pressure-mechanism}
\vspace{5pt}
\includegraphics[width=\linewidth]{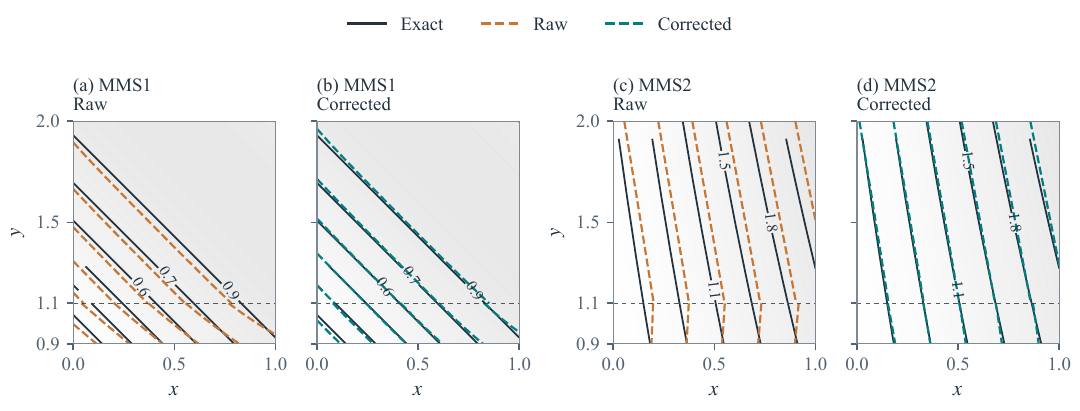}
\caption{S/B pressure contours at $K_B=K_D=10^{-2}$: MMS1 (a,b), MMS2 (c,d), with the seed used in \cref{fig:flow-fields}. Black: exact; orange: raw; teal: corrected. Each MMS pair shares contour levels; the horizontal dashed line marks SB.}\label{fig:pressure-contours}
\end{figure}

\Cref{fig:interface-profiles} examines the corrected pressure traces at SB and normal flow at BD for the same checkpoints. The two manufactured pressure traces coincide at SB for these tests; their numerical traces are plotted separately to expose the remaining pressure discrepancy. This agreement is a diagnostic of the pressure approximation, not an additional hard pressure-continuity condition. At BD, the shared normal-flow trace yields a maximum numerical jump of about $1.1\times10^{-16}$ in MMS1 and zero at stored precision in MMS2. The shared trace therefore enforces BD normal mass continuity to floating-point precision.

The cuts at $x=0.25,0.5,0.75$ in \cref{fig:coverage-profiles} show a smaller pressure offset after correction at all three locations in both manufactured cases.

\begin{figure}[!htbp]\centering
\captionsetup{font=small,skip=5pt}
\includegraphics[width=\linewidth]{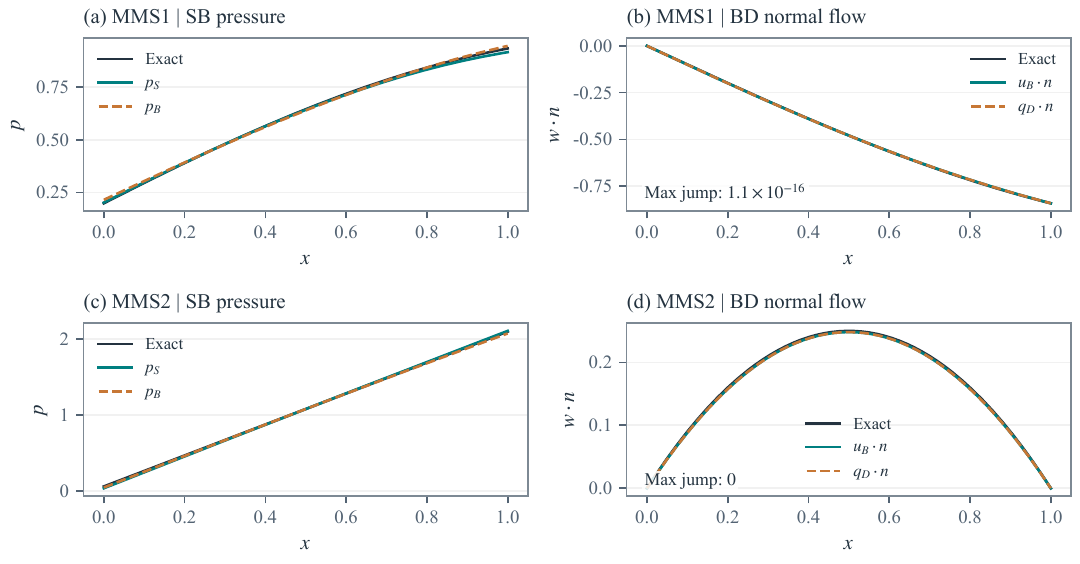}
\caption{Interface profiles for representative complete-method checkpoints: MMS1 (a,b), MMS2 (c,d), $K_B=K_D=10^{-2}$. Left: exact SB pressure and separate corrected S/B traces; right: exact and numerical BD normal flow, with $n=(0,-1)$ and separate $u_B\cdot n$ and $q_D\cdot n$ curves. Maximum jumps use the saved interface grid.}\label{fig:interface-profiles}
\end{figure}

\begin{figure}[!htbp]\centering
\captionsetup{font=small,skip=5pt}
\includegraphics[width=\linewidth]{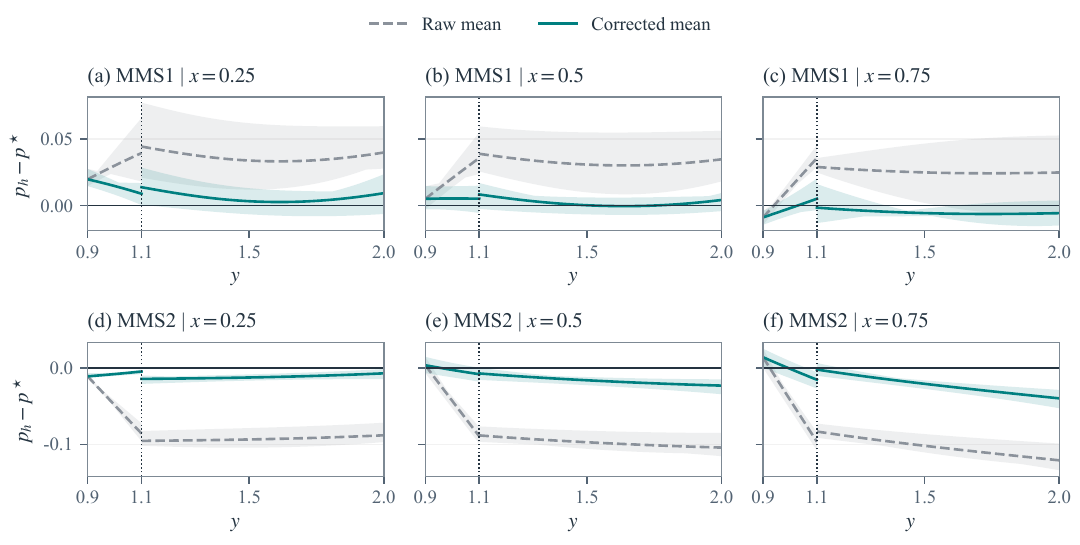}
\caption{Signed B/S pressure errors at $K_B=K_D=10^{-2}$: MMS1 (top), MMS2 (bottom), and cuts at $x=0.25,0.5,0.75$ (columns). Grey dashed/teal solid curves show raw/corrected three-seed means; bands span seed minima and maxima, not confidence intervals. Regions are drawn separately; the dotted line marks SB at $y=1.1$, and panels in each row share an error scale.}\label{fig:coverage-profiles}
\end{figure}

\FloatBarrier
\subsection{Permeability adaptation and spatial error}\label{subsec:coverage}
With one fixed training schedule across all six MMS--permeability combinations, the mean corrected upper-pressure $L^2$ errors range from 0.87\% to 2.58\%, and the mean full Brinkman-pressure $H^1$ errors from 4.77\% to 8.03\% (\cref{tab:prospective-coverage}). Order-48 quadrature verifies every order-32 correction coefficient, and all invariant checks pass. Exact fields stay fixed while forcing varies with permeability, so this experiment measures sensitivity to the coefficient and its associated optimization scaling.

\begin{table}[!htbp]\centering\small\setlength{\tabcolsep}{4pt}
\caption{Complete configuration in six MMS--permeability combinations: mean $\pm$ sample SD over three seeds per combination; errors in percent. Passes require all 14 acceptance criteria on both grids.}\label{tab:prospective-coverage}
\begin{adjustbox}{max width=\linewidth}
\begin{tabular}{@{}lccccc@{}}\toprule
Case, $K_B$ & State & $p_S$: $L^2$ & $p_B$: $L^2$ & $p_B$: $H^1$ & Pass\\\midrule
MMS1, $10^{-2}$ & raw & \(3.80\pm2.42\) & \(4.56\pm1.38\) & \(15.92\pm4.38\) & 3/3\\
MMS1, $10^{-2}$ & corrected & \(1.25\pm0.34\) & \(2.43\pm0.44\) & \(7.54\pm3.32\) & 3/3\\
MMS1, $10^{-4}$ & raw & \(4.23\pm2.65\) & \(5.32\pm2.13\) & \(18.52\pm7.01\) & 3/3\\
MMS1, $10^{-4}$ & corrected & \(1.49\pm0.09\) & \(2.58\pm0.68\) & \(8.03\pm2.45\) & 3/3\\
MMS1, $10^{-6}$ & raw & \(4.24\pm2.36\) & \(5.31\pm1.86\) & \(18.87\pm7.24\) & 3/3\\
MMS1, $10^{-6}$ & corrected & \(1.36\pm0.06\) & \(2.49\pm0.26\) & \(7.92\pm2.62\) & 3/3\\
MMS2, $10^{-2}$ & raw & \(7.03\pm0.97\) & \(4.50\pm0.59\) & \(20.17\pm2.56\) & 3/3\\
MMS2, $10^{-2}$ & corrected & \(1.44\pm0.41\) & \(0.97\pm0.17\) & \(5.29\pm0.53\) & 3/3\\
MMS2, $10^{-4}$ & raw & \(8.59\pm1.76\) & \(5.57\pm1.12\) & \(25.42\pm5.89\) & 2/3\\
MMS2, $10^{-4}$ & corrected & \(0.94\pm0.24\) & \(0.87\pm0.25\) & \(4.77\pm0.63\) & 3/3\\
MMS2, $10^{-6}$ & raw & \(8.46\pm1.73\) & \(5.51\pm1.11\) & \(24.98\pm5.77\) & 2/3\\
MMS2, $10^{-6}$ & corrected & \(0.93\pm0.24\) & \(0.97\pm0.28\) & \(4.81\pm0.71\) & 3/3\\
\bottomrule\end{tabular}
\end{adjustbox}\end{table}

The MMS2 maps in \cref{fig:prospective-coverage} show the spatial effect at all three permeabilities. Using one seed and a common signed-error scale, they expose the reduction of the broad pressure bias and the smaller spatial variation left after correction. Multi-seed results for both manufactured solutions are given in \cref{tab:prospective-coverage} and Supplement S7.

\begin{figure}[!htbp]\centering
\includegraphics[width=\linewidth]{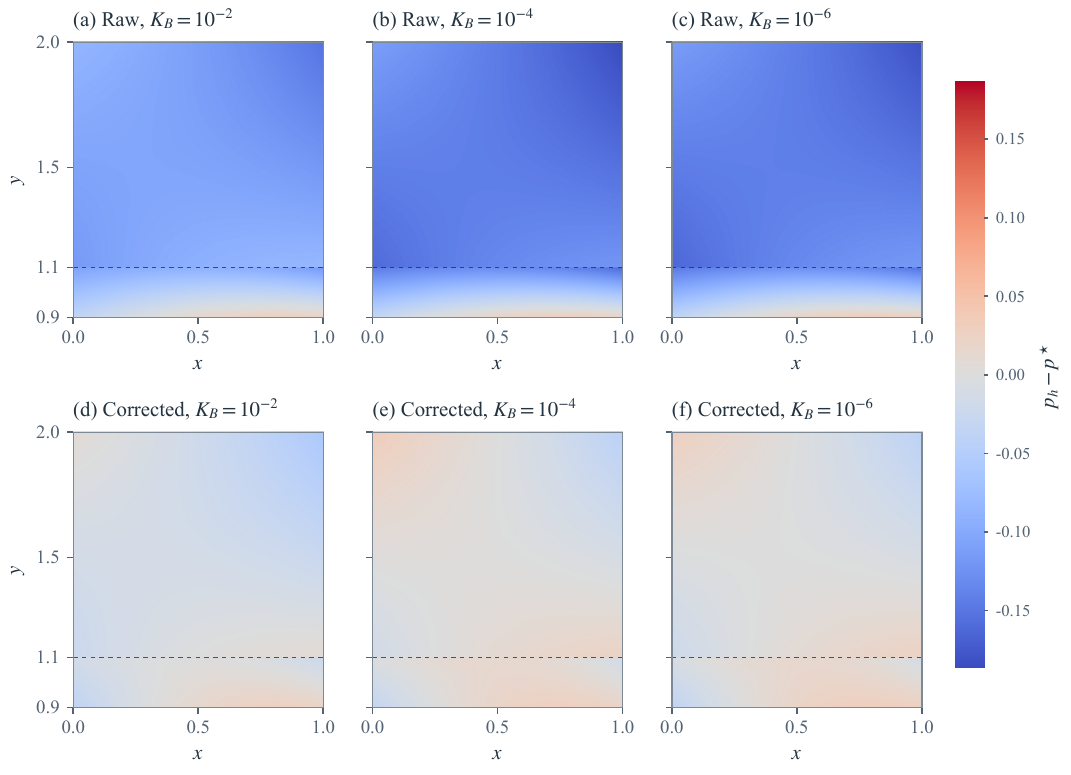}
\caption{MMS2 S/B pressure errors at $K_B=10^{-2},10^{-4},10^{-6}$, with $K_D=10^{-2}$. Top/bottom: raw/corrected states of each checkpoint, on one zero-centered scale for $p_h-p^\star$. One fixed representative seed is used across coefficients; maps are not seed averages. Exact fields stay fixed while forcing varies; dashed lines mark SB.}\label{fig:prospective-coverage}
\end{figure}

In every run across the six MMS--permeability combinations, correction reduces both upper-pressure $L^2$ errors, full Brinkman-pressure $H^1$ and the normal-derivative error; all corrected states pass the 14 acceptance criteria on both grids. Supplement S7 provides per-run results, derivative and unscaled-momentum diagnostics, and fine-grid checks. Passing the criteria does not imply equally small errors in every derivative component: for MMS2 at $K_B=10^{-6}$, the corrected mean full Brinkman-pressure $H^1$ error is 4.81\%, while the normal-derivative relative $L^2$ error remains 26.07\%. The correction controls the selected pressure moment and need not reduce local unscaled momentum residuals.

\subsection{Roles of key components and validation of pressure correction}\label{subsec:components}
\subsubsection{Hard BD traction}\label{subsec:extreme}
The component-control group uses three paired runs at each of the three MMS2 permeabilities; the same seed set is reused across coefficients (Supplement S2). Parameter counts, raw weights, collocation points and the 400+300+300 schedule are matched; the hard map changes the physical initial stress. \Cref{fig:bd-components}(a,b) isolates this contrast through uncorrected hard-BD minus soft-BD paired differences. At $K_B=10^{-4},10^{-6}$, both pressure $L^2$ errors and full Brinkman-pressure $H^1$ improve in every pair: hard BD passes 3/3 and soft BD 0/3, failing Stokes pressure. At $10^{-2}$, both pass 3/3; \cref{fig:bd-components} retains all paired differences across the three coefficients, with complete traction and cost records in Supplement S5.

Five additional independent hard-BD seeds at $K_B=10^{-6}$ pass all 14 acceptance criteria on both grids, providing further evidence of repeatability under strong drag. After correction, both BD configurations pass 3/3 in every paired-control cell. Corrected pressure rankings are assessed separately from their corresponding states in Supplement S8; complete uncorrected controls are retained in Supplement S5.

\subsubsection{Shared and released interfaces}\label{subsec:pressure-control-validation}
At MMS2 $K_B=10^{-6}$, three paired validation runs are used for shared and released interfaces, both with 46,074 parameters. Raw initialization, collocation points, auxiliaries, scaling, hard exterior boundaries, hard BD and optimizer budgets are matched. Release adds no parameters, although physical initial fields differ. Each trained model is evaluated before and after correction.

\begin{table}[!htbp]\centering\small\setlength{\tabcolsep}{4pt}
\caption{Three-seed mean $\pm$ sample standard deviation; errors are percentages. Full $H^1$ and the separate normal-derivative relative $L^2$ error are distinguished. Passes require the original 14 acceptance criteria on both grids.}
\label{tab:pressure-control}
\begin{adjustbox}{max width=\linewidth}
\begin{tabular}{@{}lccccc@{}}\toprule
State & $p_S$: $L^2$ & $p_B$: $L^2$ & $p_B$: $H^1$ & $\partial_y p_B$: $L^2$ & Passes\\\midrule
Hard/raw & \(8.06\pm1.35\) & \(5.19\pm0.70\) & \(25.03\pm4.22\) & \(146.35\pm24.81\) & 3/3\\
Hard/corrected & \(0.87\pm0.07\) & \(0.87\pm0.14\) & \(4.90\pm0.41\) & \(27.25\pm2.64\) & 3/3\\
Released/raw & \(7.93\pm1.16\) & \(5.03\pm0.50\) & \(26.17\pm4.55\) & \(153.13\pm26.80\) & 3/3\\
Released/corrected & \(1.15\pm0.16\) & \(1.21\pm0.50\) & \(4.90\pm0.53\) & \(27.11\pm3.35\) & 3/3\\
\bottomrule\end{tabular}
\end{adjustbox}\end{table}

\Cref{fig:bd-components}(c) and \cref{tab:pressure-control} show pressure gains in both interface configurations; corrected full $H^1$ means are both about 4.90\%. Hard sharing makes kinematic jumps zero. Released-interface SB velocity/BD normal-jump RMS means are 0.00083/0.00220, unchanged by correction. Sharing supplies exact kinematics, whereas pressure correction controls the associated weak direction; these mechanisms must be identified separately.
The nonlinear Darcy pressure-trace tests in Supplement S5 show a remaining limitation at the prescribed training budget. The split and amplified variants have mean trace $L^2$ errors of 22.63\% and 18.69\%, respectively, and neither passes the inherited diagnostic screen in any of its three runs. These results do not establish accurate nonlinear-trace learning or general superiority of pressure-scale separation; the screen is not an application-validated accuracy requirement.

\begin{figure}[!htbp]\centering
\includegraphics[width=\linewidth]{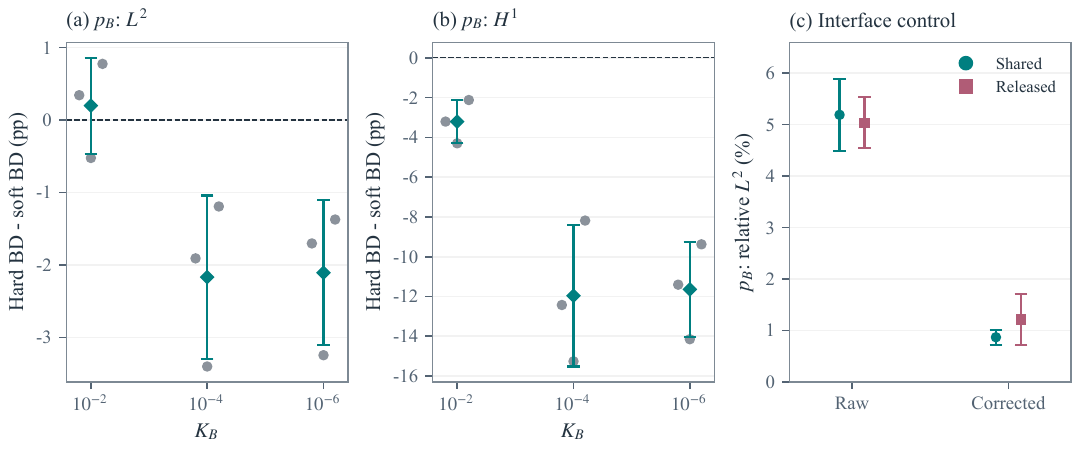}
\caption{Component controls. (a,b) Uncorrected hard-BD minus soft-BD paired differences in Brinkman-pressure relative $L^2$ and full $H^1$, in percentage points; negative values favor hard BD. Grey dots: three pairs per coefficient; teal diamonds/intervals: means/sample SDs. (c) Raw/corrected Brinkman-pressure $L^2$ means and sample SDs for shared/released interfaces in MMS2 at $K_B=10^{-6}$, three seeds per configuration.}\label{fig:bd-components}
\end{figure}

\FloatBarrier
\subsection{Boundary-driven filtration (BDF)}
\label{sec:bdf}
We now apply the method to inlet-driven filtration with mixed Darcy boundary conditions.

\paragraph{Problem and mixed-boundary trial map}

To test the construction without manufactured forcing, we consider a physical
filtration problem, denoted boundary-driven filtration (BDF).  The rectangles remain
\(\OD=(0,1)\times(0,0.9)\), \(\OB=(0,1)\times(0.9,1.1)\), and
\(\OS=(0,1)\times(1.1,2)\), but now
\begin{equation}
 \mu=\mu_{\mathrm{eff}}=1,\qquad K_B=K_D=0.1,\qquad
 \alpha=0.1,\qquad \beta=\lambda_{\mathrm{BJS}}=\alpha\mu_{\mathrm{eff}}/\sqrt{K_D}.
 \label{eq:bdf-parameters}
\end{equation}
All body forces, mass sources, and manufactured interface corrections are
zero.  The upper equations are
\(-\operatorname{div}\sigma_r+\chi_{r=B}10u_r=0\) and
\(\operatorname{div}u_r=0\), with \(\sigma_r=\nabla u_r-p_r I\).
Darcy flow satisfies \(q_D+0.1\nabla p_D=0\) and
\(\operatorname{div}q_D=0\).  At the top,
\(u_S=(0,-4x(1-x))\); the upper side walls have zero velocity.  Darcy
side walls have \(q_x=0\), while the bottom has \(p_D=0\).
With \(n=(0,-1)\) and \(t=(1,0)\), the SB velocity and traction are
continuous, and the BD conditions are
\begin{equation}
 u_B\cdot n=q_D\cdot n,\qquad
 \sigma_B n+\beta(u_B\cdot t)t+p_D n=0.
 \label{eq:bdf-interface}
\end{equation}
The inlet supplies a total downward flux of \(2/3\).  The geometry is that of \cref{fig:geometry}, with the exterior data replaced as stated above.

This is a non-manufactured numerical benchmark for the stated continuum
model. The permeability is fixed at 0.1 and the inlet amplitude at one;
the experiment evaluates this selected medium-resistance configuration.

The mixed Darcy boundary conditions require a different lifting from
\cref{sec:method}.  We use two velocity traces
\(T_{SB}=x(1-x)N_{SB}(x)\) and \(T_{BD}=x(1-x)N_{BD}(x)\).
Writing \(\eta_r=(y-y_r^-)/(y_r^+-y_r^-)\) for each upper rectangle,
\begin{equation}
 u_r=(1-\eta_r)T_r^-+\eta_r T_r^+
       +x(1-x)\eta_r(1-\eta_r)N_{u,r}(x,y),
 \label{eq:bdf-upper-map}
\end{equation}
where \((T_S^-,T_S^+)=(T_{SB},u_{\mathrm{in}})\) and
\((T_B^-,T_B^+)=(T_{BD},T_{SB})\).  The Darcy map is
\begin{equation}
 \begin{aligned}
 q_x&=x(1-x)N_{q,x}(x,y),\\
 q_y&=(T_{BD})_y+(y-0.9)N_{q,y}(x,y),\\
 p_D&=10[\eta_D N_{pd}(x)+\eta_D(1-\eta_D)N_{D,p}(x,y)],\qquad \eta_D=y/0.9.
 \end{aligned}
 \label{eq:bdf-darcy-map}
\end{equation}
The architecture follows the complete-method configuration specified in Supplement S2. Darcy side walls prescribe zero normal flux, without an additional pressure condition. The learned BD pressure trace sets $\sigma_{B,xy}=\beta(T_{BD})_x$ and $\sigma_{B,yy}=-p_D|_{BD}$, extended linearly through Brinkman by the same principle as the MMS hard map. Hard native traction does not replace the reconstructed-traction audit based on $(u,p)$.

\paragraph{Prespecified complete configuration and correction}
Four validation runs use the prespecified complete configuration. The final BDF physical objective remains fixed throughout three optimizer blocks; only the optimizer schedule changes. Mixed-boundary liftings, loss weights and budgets are adapted to this problem. Seed identifiers, the exact objective and the training schedule are given in Supplements S2 and S4.

The inlet integral and closed side walls give $\bar u_{B,y}^{bc}=-2/3$. The same boundary-mass-consistent rule sets
\begin{equation}
c=-\left[\langle-\operatorname{div}(\sigma_B)_y\rangle+(\mu/K_B)(-2/3)\right],\quad
\delta p_B=c(y-0.9),\quad\delta p_S=0.2c.
\end{equation}
Stress and auxiliaries change consistently as $\delta\sigma=-\delta pI$ and $\delta a_B=(0,c)$; Darcy fields remain unchanged. GL32 coefficients are saved before FEM error evaluation, then checked by GL48 and an independent boundary-stress integral. Independent physical-moment and local-momentum diagnostics are provided in Supplement S4.

\paragraph{Numerical reference and acceptance}

An independently refined, fitted-mesh FEniCS solution supplies the reference. It uses Taylor--Hood velocity/pressure in the upper regions and continuous linear Darcy pressure with the same pseudo-stress convention. Reference refinement, flux reconstruction and its conservation limitations are documented in Supplement S4.

Both raw and corrected states are assessed on two independent field grids against the 18-criterion BDF rule in Supplement S2: six field errors, four native/reconstructed traction vector RMS values, global mass balance and seven hard boundary/kinematic checks. Pressure means are not removed. This tests field accuracy and physical consistency jointly against a checked numerical reference.

\paragraph{Spatial flow and pressure recovery}
\Cref{fig:bdf-spatial} compares a fixed complete-method run with the FEM reference on the fine evaluation grid. The numerical fields reproduce the principal redistribution of parabolic inlet flow through Stokes and Brinkman and laterally within Darcy, together with the layered pressure structure. This spatial agreement complements the four-seed regional error statistics in \cref{fig:bdf-errors}. Displayed pressures are corrected, while velocities and Darcy fields remain unchanged; fixed-run selection and evaluation grids are specified in Supplement S2.

\begin{figure}[p]\centering
\includegraphics[width=\linewidth,height=0.75\textheight,keepaspectratio]{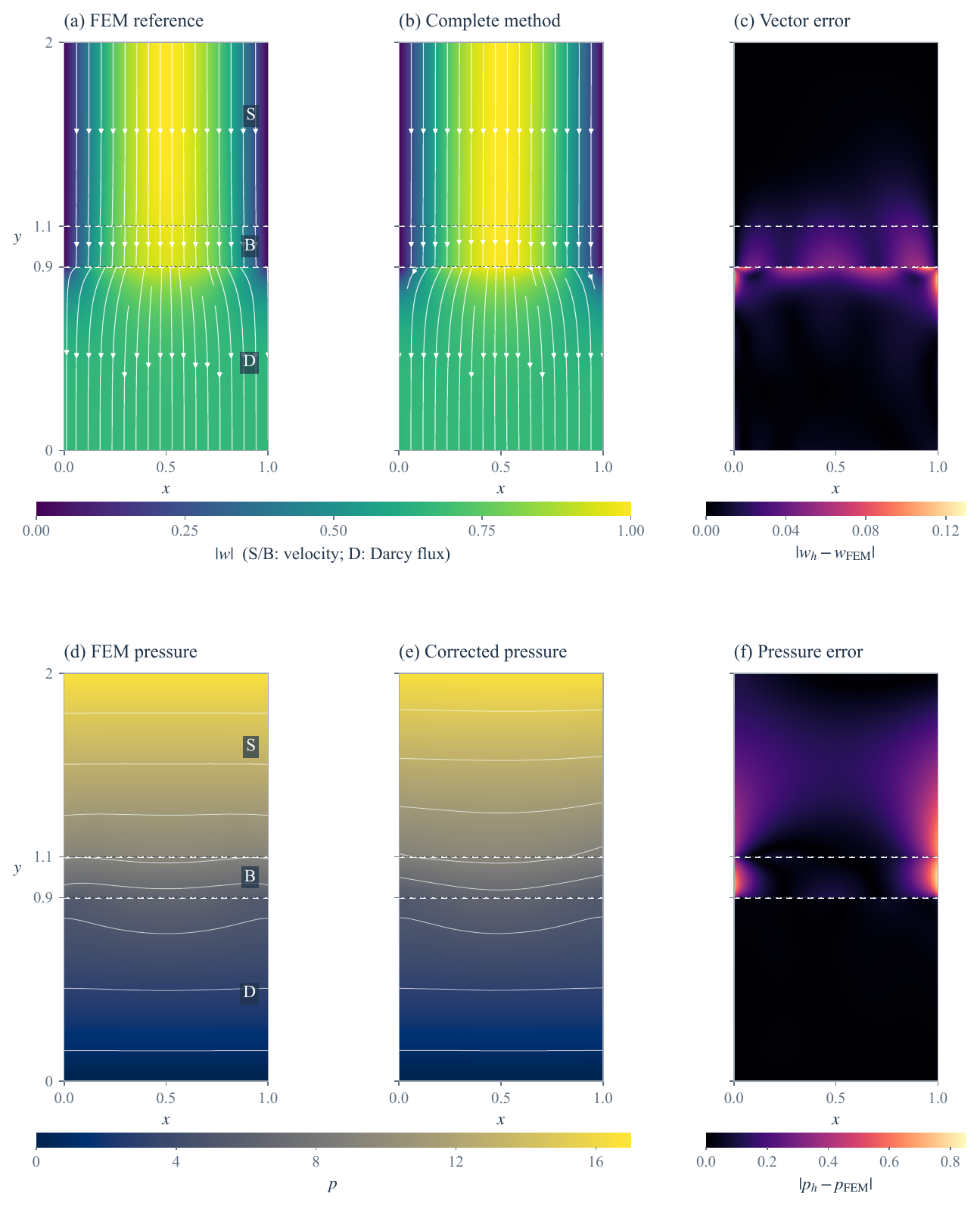}
\caption{BDF spatial validation, $K_B=K_D=0.1$. (a--c) FEM reference, complete-method prediction and vector-error magnitude, with $w=u$ in S/B and $w=q_D$ in D; (d--f) FEM pressure, corrected pressure and absolute error. White curves are regionwise streamlines or pressure contours; dashed lines mark interfaces. Reference/prediction share scales; independent linear error scales cover full sampled ranges. The level-160 FEM solution is a numerical reference.}\label{fig:bdf-spatial}
\end{figure}

\paragraph{Pressure-correction effects and complete-method validation}
All four runs satisfy the 18 acceptance criteria on both evaluation grids before and after correction. The pressure adjustment is small in this case: relative to the level-160 FEM reference, mean Stokes-pressure $L^2$ error changes from 1.4808\% to 1.4504\%, and Brinkman-pressure error from 2.5330\% to 2.5181\% (\cref{fig:bdf-errors}). Both decrease in all four runs. Pressure-specific mesh-refinement changes are reported in Supplement S4; these small gains describe agreement with the numerical reference. The main BDF result is that the mixed-boundary construction meets the field, traction and mass-balance criteria together. Order-48 and invariant checks pass throughout; Supplement S4 gives the unscaled-momentum and other physical diagnostics.

Saved velocities, Darcy pressures and fluxes are elementwise identical before and after correction; BD tractions, SB traction jumps and mass gaps are invariant. Mean training time is $885.06\pm2.07$ s. The four soft-BD comparison corrected checkpoints are separate retrospective evidence in Supplement S4.

\begin{figure}[!htbp]\centering
\includegraphics[width=\linewidth]{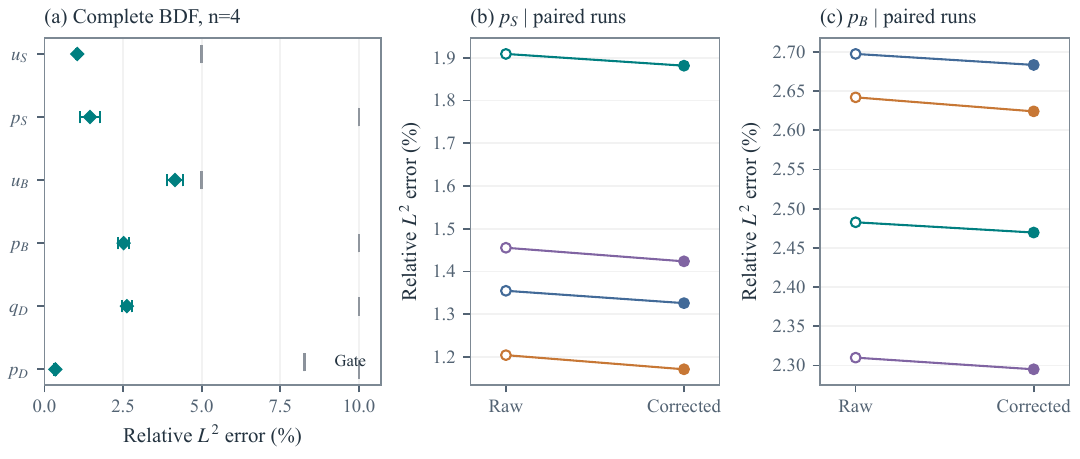}
\caption{Complete-method BDF validation against the independently refined FEM numerical reference. (a) Corrected six-field relative $L^2$ errors: four-seed means and sample SDs; ticks mark error thresholds. (b,c) Stokes/Brinkman pressure errors before and after correction, with each line joining two states of one checkpoint and colors distinguishing runs.}\label{fig:bdf-errors}
\end{figure}

\FloatBarrier
\subsection{Conservation and computational cost}\label{subsec:cost-limits}
For the benchmark kinematic-hard-trace configuration, SB velocity jumps are algebraically zero, while the MMS1 BD mass jump is $(5.77\pm0.21)\times10^{-17}$ and MMS2 is zero at stored precision. These results demonstrate exact control of interface kinematics by shared traces. Volume mass balance and traction reconstructed from $(u,p)$ are evaluated independently; complete records are given in Supplements S2 and S4--S7.

In the recorded CPU batches, mean complete-method training times for nominal MMS1 and the three MMS2 cells are approximately 240--251 s, while pressure correction takes only 0.0506--0.0615 s. Pressure control therefore adds little cost as a post-training step. The complete BDF adaptation has a mean training time of $885.06\pm2.07$ s. These times describe their respective configurations; CPU and GPU batches are not ranked for speed. Matched BD training-time and traction comparisons are in Supplement S5, with benchmark costs in Supplement S2.

\section{Conclusions}\label{sec:conclusion}
The proposed neural residual method solves the steady incompressible Stokes--Brinkman--Darcy equations with a finite-thickness transition layer. Its trial fields satisfy the prescribed exterior and interface kinematic conditions and native BD traction balance exactly. Separate interface traces enforce full velocity continuity at SB and normal-flux continuity at BD, while allowing independent Darcy tangential flux. The mixed state requires only first derivatives of network outputs.

The pressure correction addresses a low-order direction that is weakly penalized by drag-scaled momentum. Boundary mass balance and an integral momentum relation determine its amplitude without reference-pressure labels or retraining. Across the six manufactured-solution cells, every run shows lower Stokes- and Brinkman-pressure $L^2$ errors, full Brinkman-pressure $H^1$ error and normal-derivative error after correction. Corrected upper-pressure $L^2$ means across the six combinations are 0.87\%--2.58\%, with full Brinkman-pressure $H^1$ means of 4.77\%--8.03\%. The paired controls distinguish the roles of exact kinematic sharing, BD traction enforcement and pressure correction.

In the boundary-driven filtration test, all four runs satisfy the 18 prescribed acceptance criteria on both evaluation grids before and after correction. Agreement with the independently refined finite-element reference extends the numerical verification to mixed boundary conditions without manufactured forcing. For this case, pressure changes are small; the main result is the simultaneous recovery of the flow fields and satisfaction of the specified coupling and conservation criteria.

The tests cover steady, two-dimensional rectangular domains with constant coefficients. The additional nonlinear Darcy pressure trace was not recovered accurately at the tested budget, so the present evidence does not establish robust learning of more general interface-pressure traces or universal superiority of pressure-scale separation. The next step is three-dimensional SBD flow: shared traces must be defined on interface surfaces, hard maps must remain compatible where surfaces meet, and pressure-moment constraints must be derived for the resulting geometry. These constructions will be tested before extending the method to variable coefficients and transient flow.

\section*{CRediT authorship contribution statement}
\textbf{Zefeng Liu:} Conceptualization, Methodology, Software, Validation,
Formal analysis, Investigation, Data curation, Writing -- original draft,
Visualization.
\textbf{Hongxing Rui:} Conceptualization, Resources, Writing -- review \& editing, Supervision, Project administration, Funding acquisition.

\section*{Declaration of competing interest}
The authors declare no potential conflicts of interest with respect to the
research, authorship, and/or publication of this article.

\section*{Data availability}
The supplementary material appended to this preprint provides the experimental protocols, seed identifiers and per-run results. The code and supporting numerical data are available from the authors upon reasonable request.

\section*{Acknowledgments}
This work was supported by the National Natural Science Foundation of China
[grant number 12131014].

\section*{Declaration of generative AI and AI-assisted technologies in the manuscript preparation process}
OpenAI ChatGPT/Codex was used to assist with LaTeX formatting and the preparation of submission materials. Responsibility for the scientific content and the final manuscript remains with the authors.

\clearpage
\begin{singlespace}
\small\sloppy
\bibliographystyle{elsarticle-num}
\bibliography{references}
\end{singlespace}

\clearpage
\pdfbookmark[0]{Supplementary Material}{supplementary-material}
\setcounter{section}{0}
\setcounter{subsection}{0}
\setcounter{equation}{0}
\setcounter{table}{0}
\setcounter{figure}{0}
\setcounter{proposition}{0}
\setcounter{assumption}{0}
\setcounter{remark}{0}
\setcounter{algorithm}{0}
\renewcommand{\thesection}{S\arabic{section}}
\renewcommand{\theequation}{S\arabic{equation}}
\renewcommand{\thetable}{S\arabic{table}}
\renewcommand{\thefigure}{S\arabic{figure}}
\renewcommand{\theHsection}{supp.\arabic{section}}
\renewcommand{\theHequation}{supp.\arabic{equation}}
\renewcommand{\theHtable}{supp.\arabic{table}}
\renewcommand{\theHfigure}{supp.\arabic{figure}}
\renewcommand{\theHproposition}{supp.\arabic{proposition}}
\renewcommand{\theHassumption}{supp.\arabic{assumption}}
\renewcommand{\theHremark}{supp.\arabic{remark}}

\begin{center}
{\Large\bfseries Supplementary Material\par}
\medskip
{\large Hard-Trace First-Order Residual Learning for Coupled\\
Stokes--Brinkman--Darcy Flow\par}
\medskip
Zefeng Liu and Hongxing Rui\\
School of Mathematics, Shandong University, Jinan 250100, China
\end{center}

This supplement contains implementation details and conditional analysis (S1), experiment protocols and additional metrics (S2), optimization and sampling diagnostics (S3), BDF validation and soft-BD controls (S4), BD component and nonlinear-trace tests (S5), pressure-moment derivation and interface-release controls (S6), complete-method validation using the method of manufactured solutions (MMS) across six MMS--permeability combinations (S7), and pressure-correction controls for benchmarks and BD pairs (S8). Main-text references refer to the preceding main article.

\section{Supplementary implementation and conditional analysis}
\subsection{Network output components and boundary assignments}
The complete augmented state is defined in main-text Eq.~(13); the component lists below specify the raw network outputs used by its trial maps. For $r\in\{S,B\}$,
\begin{equation}
N_r(x,y;\theta_r)=
\big(
\widehat u_{r,x},\widehat u_{r,y},\widehat p_r,
\widehat\sigma_{r,xx},\widehat\sigma_{r,xy},
\widehat\sigma_{r,yx},\widehat\sigma_{r,yy},
\widehat a_{r,x},\widehat a_{r,y}
\big).
\label{s-eq:upper-raw-output}
\end{equation}
Upper pressure and pressure-gradient auxiliaries are direct outputs. Stress is also direct in the soft-BD control; in the complete method the BD stress components undergo the lifting in main-text Eq.~(34). Velocity uses the main-text hard map. The Darcy network produces

\begin{equation}
N_D(x,y;\theta_D)=
\big(\widehat q_{D,x},\widehat q_{D,y},\widehat p_D\big);
\label{s-eq:darcy-raw-output}
\end{equation}

\begin{table}[!htbp]
\centering
\caption{Boundary data supplied to each hard field construction.  Entries
marked \emph{learned} are trainable traces with fixed compatible endpoints.}
\label{s-tab:hard-map}
\begin{adjustbox}{max width=\linewidth}
\begin{tabular}{lllll}
\toprule
Field & Left & Right & Bottom & Top\\
\midrule
\(u_{\mathrm S}\) & prescribed & prescribed & \(T_{SB}\) (learned) & prescribed\\
\(u_{\mathrm B}\) & prescribed & prescribed & \(T_{BD}\) (learned) & \(T_{SB}\) (shared)\\
\(p_D\) & prescribed & prescribed & prescribed & \(P_{BD}\) (learned)\\
\bottomrule
\end{tabular}
\end{adjustbox}
\end{table}
For the hard MMS maps, exterior diagnostics are
\begin{equation}
\R_{\mathrm S}^{\mathrm{ext}}=u_{\mathrm S}-g_{\mathrm S},\qquad
\R_{\mathrm B}^{\mathrm{ext}}=u_{\mathrm B}-g_{\mathrm B},\qquad
\R_D^{\mathrm{ext}}=\frac{p_D-g_D}{\mu/K_D}.
\label{s-eq:exterior-residuals}
\end{equation}
These boundary residuals vanish by construction. The remaining residuals, their scaling and the continuous and discrete objectives are defined in main-text Section~3.4.
\subsection{Conditional error estimate and its assumptions}
Algebraic admissibility and continuous consistency, including their short proofs, are given in main-text Propositions~1 and 2. The following estimate is conditional on a stability assumption for the chosen scaled operator; it is not an additional proved coercivity theorem. Define
\begin{align}
X_{\mathrm{reg}}={}&
\prod_{r\in\{\mathrm S,\mathrm B\}}
\big[
H^1(\Omega_r)^2\times H^1(\Omega_r)
\times H(\operatorname{div};\Omega_r)^2
\times H(\operatorname{div};\Omega_r)
\big]
\nonumber\\*
&\times H(\operatorname{div};\OD)\times H^1(\OD),
\label{s-eq:regularity-space}
\end{align}
with each pseudo-stress row in \(H(\operatorname{div})\), and additionally require the interface residuals in main-text Eq.~(29) to lie in \(L^2\). Let \(V_0\subset X_{\mathrm{reg}}\) have homogeneous hard traces. Write \(\mathcal A:X_{\mathrm{reg}}\to Y\) for the linear scaled volume/traction operator and \(F\) for its data, with the \(L^2\) product norm on \(Y\) induced by main-text Eq.~(31).

\begin{assumption}[Continuous stability]
\label{s-ass:stability}
For a fixed candidate physical error norm \(\|\cdot\|_X\) on \(V_0\),
there exists a constant \(c_*>0\) such
that
\[
\|\mathcal Av\|_Y\ge c_*\|v\|_X
\qquad\text{for all }v\in V_0.
\]
\end{assumption}

Under \cref{s-ass:stability}, \(\mathcal A(z_\theta-z)=\mathcal A z_\theta-F\) implies

\begin{equation}
\|z_\theta-z\|_X
\le c_*^{-1}\sqrt{\mathcal J(z_\theta)}.
\label{s-eq:conditional-bound}
\end{equation}
Equation~\ref{s-eq:conditional-bound} concerns the continuous residual under the stated stability assumption. Ordinary \(H(\operatorname{div})\) traces lie in \(H^{-1/2}\), hence the additional \(L^2\) requirement. Stability uniform in permeability or layer thickness, and transfer from finite sampling to this bound, remain open. The lifting assumes horizontal rectangular interfaces and compatible corners; curved interfaces or junctions require a different construction.

\section{Experiment protocols and additional metric definitions}
\subsection{Experiment groups and seed identifiers}\label{s-subsec:seed-registry}
The identifiers in \cref{s-tab:seed-registry} initialize network weights and scrambled Sobol sampling. The sample size $n$ counts trained runs within one case, coefficient and configuration; raw and corrected fields are two states of one run. Seeds reused across coefficients do not create additional independent seed replicates. Pairing matches initial raw weights and samples, although a changed hard map can change the initial physical fields.
\begin{table}[!htbp]\centering\footnotesize\setlength{\tabcolsep}{3pt}
\caption{Seed registry. Development runs are excluded from validation statistics.}\label{s-tab:seed-registry}
\begin{adjustbox}{max width=\linewidth}
\begin{tabular}{@{}p{4.1cm}cp{4.8cm}p{4.4cm}@{}}\toprule
Experiment group & $n$ & Seed identifiers & Reuse or pairing\\\midrule
MMS benchmarks & 5 & 20260901--20260905 & Same set for each method/case\\
Complete MMS validation & 3 & 6201, 6227, 6263 & Reused in all six MMS--permeability combinations\\
Hard/soft BD control & 3 & 4301, 4327, 4363 & Paired; reused at three $K_B$\\
Shared/released interfaces & 3 & 6101, 6127, 6163 & Paired; development: 4301\\
Complete BDF validation & 4 & 7201, 7227, 7263, 7291 & Fixed complete configuration\\
Soft-BD BDF control & 4 & 20260912--20260915 & Development: 20260911\\
Coefficient scan & 5 & 3101, 3127, 3163, 3181, 3203 & Separate from hard-BD group\\
Independent hard-BD validation & 5 & 4101, 4127, 4163, 4181, 4203 & Development: 20260909\\
Nonlinear pressure-trace control & 3 & 5101, 5127, 5163 & Paired; development: 20260910\\
\bottomrule\end{tabular}
\end{adjustbox}\end{table}
Single-seed optimization and sampling diagnostics use the first MMS benchmark seed. Numerical identifiers label runs; they do not encode a physical or architectural parameter. They do not ensure bitwise reproducibility across software and hardware versions.
All regional networks have four hidden layers of width 64; MMS hard-trace and complete BDF configurations use three trace networks with three hidden layers of width 32. Activation is $\tanh$, initialization is Xavier-normal, inputs are physical coordinates and arithmetic is float64. Configuration parameter counts are in main-text Table~2; the earlier soft-BD BDF architecture has 43,865 parameters.

\subsection{Compared formulations}
\label{s-subsec:compared-formulations}

The experiment compares the following three implementations.

\begin{enumerate}[leftmargin=1.8em]
\item \textbf{Conventional strong-form PINN.}
The Stokes and Brinkman networks each output \((u_x,u_y,p)\), whereas the
Darcy network outputs one raw scalar whose physical pressure is multiplied by
\(\mu/K_D\).  Pseudo-stress and Darcy flux are reconstructed as
\[
\sigma_r=\mu_r\nabla u_r-p_r I,
\qquad
q_D=-\frac{K_D}{\mu}\nabla p_D.
\]
The momentum equations and Darcy mass equation therefore require second
derivatives of neural outputs.  All exterior conditions and all four interface
conditions in main-text Eqs.~(9) and (10) are imposed by unscaled, unit-weight soft
mean-square penalties.  The reconstructed constitutive identities are not
independent trainable equations.

\item \textbf{Soft first-order residual model.}
The upper networks independently predict \((u,p,\sigma)\), and the Darcy
network predicts \((q_D,p_D)\), with the raw Darcy pressure again multiplied
by \(\mu/K_D\).  This removes second derivatives, but every exterior and
interface condition remains a soft penalty.  This baseline has no pressure
auxiliaries \(a_r\), no pressure-Poisson residuals, no trace networks, no
Coons lifting, and no separate unit-scale Darcy interface-pressure trace.  Its
volume and interface residual groups are used in their unscaled base forms,
with unit coefficients after the componentwise group mean squares are formed.
The exterior Darcy-pressure residual is also unscaled for both baselines.

\item \textbf{Kinematic hard-trace first-order model.}
This is the original soft-BD configuration in main-text Section~3.  It predicts independent first-order
stress and flux variables, adds \(a_{\mathrm S}\) and \(a_{\mathrm B}\), and
uses the compatible traces \(T_{SB}\), \(T_{BD}\), and \(P_{BD}\).  The
exterior velocity and Darcy-pressure data, Stokes--Brinkman velocity
continuity, and Brinkman--Darcy normal mass continuity are identities of the
trial map.  Only the first-order volume equations and the two traction groups
remain active in the optimization.  Brinkman momentum is divided by
\(1+\mu/K_B=101\), and only the tangential component of the
Brinkman--Darcy traction residual is divided by
\(1+\lambda_{\mathrm{BJS}}=2\); the normal pressure--traction component is not
attenuated.  The Darcy constitutive residual
\(q_D+(K_D/\mu)\nabla p_D\) is unscaled in the reported model; in particular,
no additional \(K_D^{-1/2}\) factor is used.
\end{enumerate}

Parameter counts and the nominal-budget comparison scope are given in main-text Table~2 and the experimental settings.

\subsection{Collocation sampling, seeds, and reproducibility}
\label{s-subsec:training-samples}

Each training run uses fixed, full-batch collocation sets.  The points are
generated once at the start of a case--seed run and are not resampled during
Adam or L-BFGS.  All three methods receive the same point sets within a given
case--seed pair.  Sobol points for the two-dimensional interiors and one-dimensional boundaries and interfaces are generated with PyTorch\textquotesingle s \texttt{torch.quasirandom.}\allowbreak\texttt{SobolEngine}, using \texttt{scramble=True} and the run-specific seed. Bratley and Fox provide the classical sequence-generator reference \cite{BratleyFox1988}; the recorded PyTorch versions are listed by batch in \cref{s-subsec:hardware-environment}.  The counts are summarized in
\cref{s-tab:training-points}.

\begin{table}[!htbp]
\centering
\caption{Fixed collocation counts for one training run.  ``Per region'' means
that the stated count is used separately in \(\OS\), \(\OB\), and \(\OD\).}
\label{s-tab:training-points}
\small
\begin{adjustbox}{max width=\linewidth}
\begin{tabular}{
  >{\raggedright\arraybackslash}p{3.7cm}
  >{\raggedright\arraybackslash}p{2.5cm}
  >{\raggedright\arraybackslash}p{7.8cm}}
\toprule
Point group & Count & Allocation\\
\midrule
Interior & 512 per region & two-dimensional scrambled Sobol\\
Exterior boundary & 192 per region &
64 per edge in Stokes/Darcy; 96 per vertical Brinkman side\\
Stokes--Brinkman interface & 192 & one-dimensional scrambled Sobol\\
Brinkman--Darcy interface & 192 & one-dimensional scrambled Sobol\\
\bottomrule
\end{tabular}
\end{adjustbox}
\end{table}

For the kinematic hard-trace model, exterior residuals remain in the sum but vanish
algebraically; \(\R_{SB}^{u}\) and \(\R_{BD}^{m}\) are computed as diagnostics
and excluded from the sum.  For the two soft formulations, the same
sampled conditions contribute gradients to the objective.

The five benchmark seeds are listed in \cref{s-tab:seed-registry}.

For each run, the seed initializes the Python, NumPy, and PyTorch random-number
generators and the scrambling of the Sobol engines.  It therefore controls
both the Xavier initialization and the sampled training sets.  This procedure
does not assert bitwise reproducibility across different CUDA, driver, or
PyTorch versions; the seed-to-seed statistics quantify the observed run
variation on the stated platform.

\subsection{Optimization budget and hardware}
\label{s-subsec:optimization-budget}

Every model is optimized first with Adam at learning rate \(10^{-3}\)
\cite{KingmaBa2015}.  Exactly 400 completed Adam parameter updates are made.
The Adam endpoint initializes full-batch PyTorch L-BFGS optimization using
learning rate \(0.8\), history size \(50\), strong-Wolfe line search, gradient
tolerance \(10^{-10}\), and change tolerance \(10^{-12}\)
\cite{LiuNocedal1989}.  For an individual call, if \(m\) denotes
\texttt{max\_iter}, then
\texttt{max\_eval} is
\(\max\{\lfloor5m/4\rfloor,m+1\}\).  The nominal totals are listed in \cref{s-tab:optimizer-budgets}.
\begin{table}[!htbp]
\centering
\caption{Nominal optimization totals for all three methods.  The parenthetic
number is the corresponding maximum total closure-evaluation allowance.}
\label{s-tab:optimizer-budgets}
\begin{adjustbox}{max width=\linewidth}
\begin{tabular}{lcc}
\toprule
Case & Completed Adam updates &
L-BFGS \texttt{max\_iter} (\texttt{max\_eval})\\
\midrule
MMS1 & 400 & 300 (375)\\
MMS2 & 400 & 600 total (750 total)\\
\bottomrule
\end{tabular}
\end{adjustbox}
\end{table}
The L-BFGS values are nominal upper bounds.  They are not statements that every
run accepted exactly 300 or 600 quasi-Newton iterations.  Moreover, one
accepted L-BFGS step may call the loss closure several times during line
search.  Closure evaluations are therefore recorded separately and are the
horizontal coordinate whenever an optimization plot uses that count.

The MMS2 total was selected after a preliminary common 400+300 diagnostic.
PINN and soft first-order models use one L-BFGS call with
\texttt{max\_iter}=600; every archived hard model uses 300 followed by a new
300-iteration optimizer. All seeds receive the same total, but the hard
history reset makes the optimizer paths different. This is a post-pilot
nominal-budget comparison.

All MMS benchmark training runs were executed in float64 on the same NVIDIA RTX
3050 Ti GPU.  The kinematic hard-trace model is larger and has more differentiated
quantities, so an equal nominal optimizer budget is not an equal-runtime or
equal-FLOP comparison.  The independent frozen-checkpoint evaluation below was
executed in float64 on the CPU.

\subsection{Additional norms and quadrature conventions}\label{s-subsec:supp-evaluation}
Relative $L^1/L^2$ and full $H^1$ errors, and the within-checkpoint regional maxima, are defined in the main-text evaluation section. The same quadrature notation gives the sampled maximum error
\begin{equation}
E_{L_h^\infty,r}(v)
=\frac{\max_{x_i\in X_r}|e_v(x_i)|}
{\max_{x_i\in X_r}|v^{\star}(x_i)|}.
\label{s-eq:relative-linf-error}
\end{equation}
The Darcy-flux norm is
\begin{equation}
E_{H(\operatorname{div}),D}(q_D)
=\frac{\left(
Q_D[|q_{D,\theta}-q_D^{\star}|^2]
+Q_D[|\nabla\!\cdot(q_{D,\theta}-q_D^{\star})|^2]
\right)^{1/2}}
{\left(
Q_D[|q_D^{\star}|^2]
+Q_D[|\nabla\!\cdot q_D^{\star}|^2]
\right)^{1/2}}.
\label{s-eq:relative-hdiv-error}
\end{equation}
The sampled maximum is not a continuum supremum. Absolute norms are also retained; a relative value is undefined when the reference norm is at most $10^{-12}$. Pressure means are not removed. Tensor-product trapezoidal quadrature uses Euclidean vector and Frobenius gradient magnitudes. Metrics are accumulated over the entire evaluation grid before aggregation, including when differentiation is batched in groups of at most 4096 points. Conservation diagnostics are absolute $L^2$ norms of Stokes/Brinkman divergence and Darcy mass residual.
\subsection{Cross-seed aggregation}
For any checkpoint metric $M_s$, the mean and sample standard deviation over the $n$ runs in the relevant group are
\begin{equation}
\overline M=\frac{1}{n}\sum_{s=1}^{n}M_s,\qquad
s_M=\left[\frac{1}{n-1}\sum_{s=1}^{n}(M_s-\overline M)^2\right]^{1/2}.
\label{s-eq:seed-statistics}
\end{equation}
Each grid is reduced to one checkpoint metric first. Grid nodes and raw/corrected states are not independent replicates. Pass counts count individual checkpoints satisfying all applicable acceptance criteria; thresholds are not applied to group means. Standard deviations describe run-to-run variability and are not confidence intervals. Sample sizes and pairing are specified in \cref{s-tab:seed-registry}. In the tables, Y/N denote pass/fail.
\subsection{Evaluation families and diagnostic conventions}\label{s-subsec:gate-families}
The benchmark acceptance thresholds are defined in main-text Table~3. The following rules apply to the other experiment groups; all inequalities are strict and all applicable acceptance criteria must pass within each checkpoint before seeds are aggregated.
\begin{table}[!htbp]\centering\small\setlength{\tabcolsep}{4pt}
\caption{Additional acceptance families. Field thresholds are relative $L^2$ ratios; traction thresholds use unscaled physical quantities.}\label{s-tab:additional-gates}
\begin{adjustbox}{max width=\linewidth}
\begin{tabular}{@{}p{2.3cm}p{8.8cm}c@{}}\toprule
Family & Quantities and strict upper bounds & Gates\\\midrule
MMS component & Two upper velocities: 0.05; three pressures and Darcy flux: 0.10 & 6\\
 & Native/reconstructed SB/BD traction, each tangential and normal component RMS: 0.10 & 8\\\midrule
BDF joint & Same six relative field bounds as above & 6\\
 & Native/reconstructed SB/BD traction, each vector RMS: 0.10 & 4\\
 & Global mass defect relative to inlet flux: 0.01 & 1\\
 & Each hard boundary/kinematic maximum absolute error: $10^{-12}$ & 7\\
\bottomrule\end{tabular}
\end{adjustbox}\end{table}
The seven BDF hard checks are top Stokes velocity, side Stokes velocity, side Brinkman velocity, side Darcy normal flux, bottom Darcy pressure, SB velocity continuity and BD normal-flux continuity. The MMS family totals 14 criteria and the BDF family 18. The 14-gate diagnostic is also applied retrospectively to the MMS benchmark checkpoints on the extended grids; it does not replace their original benchmark rule. For nonlinear-trace tests it is a diagnostic screen, not an application-validated accuracy requirement. Full $H^1$, pressure-derivative error and raw momentum are additional diagnostics, not extra criteria in these totals.
\begin{table}[!htbp]\centering\small\setlength{\tabcolsep}{4pt}
\caption{Evaluation grids. Regional entries are horizontal by vertical; all are independent of training samples.}\label{s-tab:evaluation-grids}
\begin{adjustbox}{max width=\linewidth}
\begin{tabular}{@{}p{3.3cm}p{3.7cm}p{3.7cm}p{2.5cm}@{}}\toprule
Group & Primary regional grid & Fine regional grid & Interface points\\\midrule
MMS benchmark & $81\times61$ each & --- & 401\\
Complete/component MMS & $161\times121$ each & $321\times241$ each & 801 / 1601\\
BDF fields & S,D: $61\times55$; B: $61\times14$ & S,D: $121\times241$; B: $121\times61$ & See text\\
\bottomrule\end{tabular}
\end{adjustbox}\end{table}
Complete BDF checks traction on 401/1601 main/fine interface points. The earlier soft-BD protocol uses a 401-point traction check plus a denser six-field check; its retrospective correction diagnostics retain their specified evaluation sets. A fine grid checks quadrature sensitivity of a fixed network, not convergence with training resolution.

\emph{Native} traction uses the independently learned stress; \emph{reconstructed} traction uses $\sigma_r^{\rm rec}=\mu_r\nabla u_r-p_rI$. For a vector residual with $m$ components, vector RMS is $\sqrt{Q[|R|_2^2]/|G|}$, whereas component-averaged RMS is smaller by $\sqrt m$. Brinkman momentum tables use the latter. ``Maximum component'' means $\max_{i,j}|R_j(x_i)|$; ``maximum norm'' means $\max_i|R(x_i)|_2$. A reconstructed momentum residual uses the reconstructed stress. Grid diagnostics use the stated evaluation grid; values from the separate soft-BD evaluator are not treated as the same sample set. Percentages are explicitly marked; other residual values are nondimensional, physical and unscaled.
\subsection{Baseline field and conservation displays}
\Cref{s-fig:cross-seed-errors} supplements the main $L^2$ comparison with full derivative norms; \cref{s-fig:physics-audit} reports independent conservation and reconstructed-traction diagnostics.

\begin{figure}[!htbp]\centering
\includegraphics[width=\linewidth]{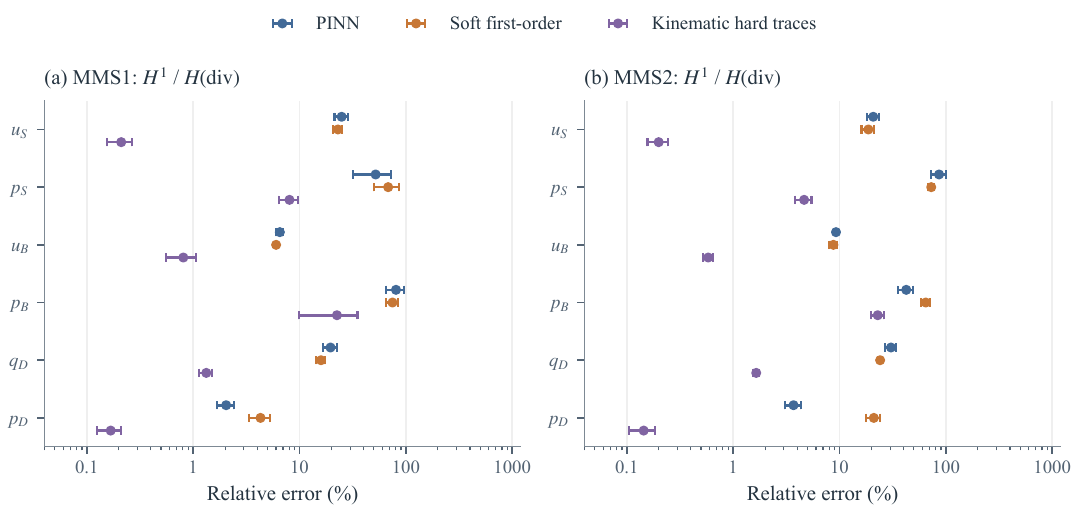}
\caption{Full $H^1$ relative errors of five-seed MMS benchmarks, using $H(\mathrm{div})$ for Darcy flux: (a) MMS1; (b) MMS2. Points and intervals show means and sample SDs in percent, with the main accuracy figure\textquotesingle s configuration colors.}\label{s-fig:cross-seed-errors}
\end{figure}

\begin{figure}[!htbp]\centering
\includegraphics[width=\linewidth]{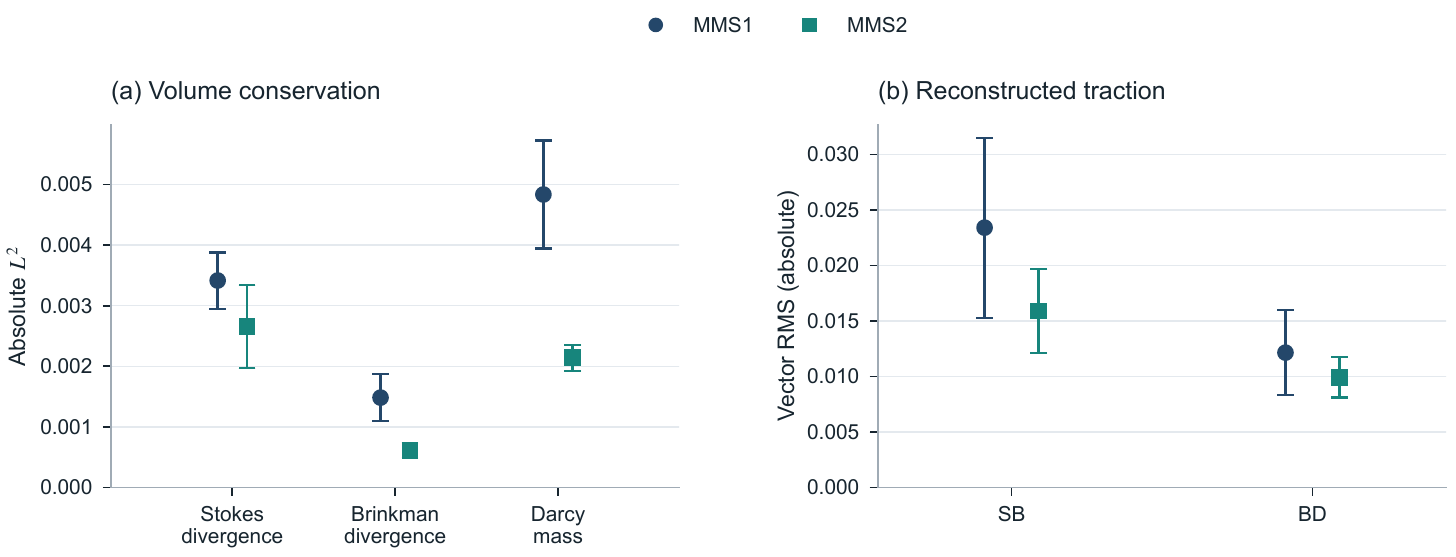}
\caption{Conservation and reconstructed traction of benchmark kinematic hard traces. Divergence/mass use absolute $L^2$ and traction uses vector RMS; means and sample SDs are shown on separate scales.}\label{s-fig:physics-audit}
\end{figure}

\subsection{Computational environment and timing scope}\label{s-subsec:hardware-environment}
The table collects recorded environments; CPU models and software versions absent from the saved records are not inferred. Timing comparisons use the same hardware and corresponding batch, with no CPU/GPU cross-batch speed ranking.
\begin{center}\small
\begin{adjustbox}{max width=\linewidth}
\begin{tabular}{@{}p{0.25\linewidth}p{0.65\linewidth}@{}}\toprule
Experimental batch & Recorded environment and timing scope\\\midrule
MMS benchmarks & RTX 3050 Ti GPU; float64; times include the training routine's final evaluation.\\
Complete MMS and component controls & CPU, float64; two threads per process, at most two concurrent processes. The MMS1 low-permeability protocol records PyTorch 2.7.1+cu118.\\
Earlier soft-BD BDF & RTX 3050 Ti Laptop GPU; PyTorch 2.7.1+cu118; serial development, at most two concurrent validation processes.\\
Complete BDF validation & CPU, float64; PyTorch 2.7.1+cu118; two threads per process; complete configuration in S4.\\
FEM reference & FEniCS; Taylor--Hood and continuous linear Darcy pressure; GMRES interface iteration and MUMPS subproblem solves.\\\bottomrule
\end{tabular}
\end{adjustbox}
\end{center}

\begin{figure}[!htbp]\centering
\includegraphics[width=\linewidth]{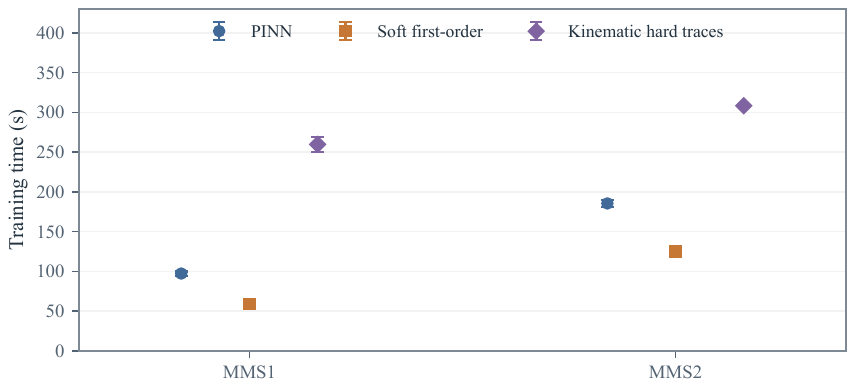}
\caption{GPU benchmark time including final evaluation: five-seed means and sample SDs for each method and MMS case.}\label{s-fig:baseline-cost}
\end{figure}

\subsection{Spatial-figure protocol}
The main flow, pressure-contour and interface figures use the complete-method checkpoint with seed 6201 at $K_B=K_D=10^{-2}$ for each MMS. This is a fixed representative run, not a minimum-error selection. The three-cut figure uses all three validation seeds 6201, 6227 and 6263. The MMS2 permeability maps use seed 6201 at each of the three $K_B$ values, keeping $K_D=10^{-2}$. Field panels use the saved $161\times121$ grid in each region; the SB pressure and BD jump diagnostics use the saved 801-point interface traces, while displayed BD normal-flow curves use the boundary rows of the regional field grids. No model is evaluated for figure generation. Flow magnitudes and vector errors use $w=u$ in S/B and $w=q_D$ in D, with no interpolation across material interfaces. Raw/corrected pressures retain the original gauge, and signed errors use the saved exact fields. Means and sample SDs use the stated group sizes; profile bands are seed minima and maxima. The paired BD and interface-release profile figures in S5 and S6 concern different component-control cohorts and do not repeat the complete-method validation cuts.

\paragraph{Pressure-direction diagnostic}
main-text Figure~5(c,d) uses the fixed complete-method MMS2 checkpoint with seed 6201, $K_B=10^{-6}$ and $K_D=10^{-2}$, not an error-selected run. The stored coefficient is $c_*=0.7066119935322748$. Independent order-48 physical moments are $-0.7066119935322743$ before correction and $4.44\times10^{-16}$ afterward. Panel (c) uses the exact affine identity between these checked endpoints; panel (d) evaluates $p_S+c h_B$ and $p_B+c(y-y_{BD})$ on the frozen main grid with its stored quadrature weights and pressure gauge. Exact pressure is used only to evaluate errors, not to choose the coefficient. Panels (a,b) are analytical diagnostics; the sensitivity curve starts from an exact solution and is not a trained-network loss or a new parameter-sweep experiment. In this illustrative run the unscaled Brinkman momentum component RMS slightly increases, $691.8125\to692.1413$; moment cancellation does not imply pointwise momentum improvement.

\paragraph{BDF spatial comparison}
main-text Figure~11 uses seed 7201, the first prespecified complete-BDF validation run, and its corrected fine-grid fields. The level-160 FEM reference is sampled at 121 horizontal points and 241/61/241 vertical points in S/B/D. This is denser sampling of the same FEM reference, not a new FEM solve. Reference/predicted fields share scales, and each absolute-error scale covers the full sampled range without clipping. Streamlines and pressure contours are drawn separately in each region, without cross-interface smoothing. The original gauge and Darcy-field invariance are retained.

\section{Optimization, sampling, and evaluation-grid diagnostics}
These archived single-seed diagnostics are separate from the main cross-seed evaluation. All diagnostic field grids are independent of training Sobol points.
\subsection{Optimization endpoints}
Representative seed 20260901 uses a $31\times21$ equal-weight grid per region. MMS1 uses 400 Adam and 300 L-BFGS iterations; MMS2 uses 400 and 600. All six hard-trace field errors improve between L-BFGS endpoints, by factors 1.72--10.54 and 2.43--12.50, respectively. Adam is evaluated every 20 updates; L-BFGS has only two independently audited endpoints. Line-search closures are not accepted iterations, and monotonic improvement is not claimed. Main results use independent trapezoidal evaluation.
\subsection{Sampling sensitivity}
The same seed tests interior/exterior/interface point-count triples $(128,48,48)$, $(512,192,192)$ and $(1024,384,384)$, with 400+300 for MMS1 and 400+600 for MMS2. Original diagnostics use $80\times60$ equal-weight field grids and 401 equal-weight interface points. All settings meet their stored diagnostic thresholds, but pressure and Darcy-flux errors are nonmonotone in sample count. This is a one-seed, three-level sensitivity check, not a convergence-order or uncertainty study.
\subsection{Fixed-checkpoint evaluation grids}
One representative checkpoint per method and case is reevaluated on $80\times60$, $160\times120$ and $320\times240$ equal-weight grids. Hard-model maximum velocity/pressure errors change by less than 0.6\%; Darcy-flux errors change by about 2.5\%--3.6\% (0.03--0.06 percentage points), without changing field-gate decisions. The MMS1 hard checkpoint is a separate diagnostic model; MMS2 uses formal seed 20260901. This is not quadrature certification of all thirty checkpoints.

\section{BDF validation, reference and soft-BD controls}
This section presents complete-method validation first, followed by the earlier soft-BD protocol and its separate correction control. Both use the physical BDF problem in main-text Section~5.5 and the independently refined reference described here. Different architectures and training paths prevent interpretation as an isolated hard-BD ablation.
\subsection{Complete BDF: verification and adverse changes}
Both states pass all 18 joint acceptance criteria on both grids in 4/4 seeds; the 14 MMS-style criteria also pass. GL48 and independent boundary-stress integration differ by at most $6.22\times10^{-15}$. Coefficients for seeds 7201, 7227, 7263 and 7291 are 0.02312238, 0.02811921, 0.01977362 and 0.02392622. The original final BDF physical loss remains fixed; the main text defines the method; training budgets are given below. Velocities and Darcy fields are elementwise identical; traction, boundary, kinematic, constitutive and auxiliary invariant checks pass 4/4.

\begin{table}[H]\centering\normalsize\renewcommand{\arraystretch}{1.18}\setlength{\tabcolsep}{4pt}
\caption{Complete-BDF additional momentum and fine-grid diagnostics; the final column counts increases after correction.}
\begin{adjustbox}{max width=\linewidth}
\begin{tabular}{@{}lcccc@{}}\toprule
Grid & Metric & Raw & Corrected & Increases\\\midrule
main & B momentum RMS & \(0.0608\pm0.0036\) & \(0.0631\pm0.0039\) & 4/4\\
main & B recon. mom. RMS & \(3.1381\pm0.1124\) & \(3.1436\pm0.1132\) & 4/4\\
main & B momentum maximum component & \(0.3181\pm0.0886\) & \(0.3418\pm0.0894\) & 4/4\\
fine & $p_S$: $L^2$ (\%) & \(1.4800\pm0.3036\) & \(1.4497\pm0.3056\) & 0/4\\
fine & $p_B$: $L^2$ (\%) & \(2.5247\pm0.1750\) & \(2.5097\pm0.1745\) & 0/4\\
fine & B momentum RMS & \(0.0605\pm0.0036\) & \(0.0628\pm0.0038\) & 4/4\\
fine & B recon. mom. RMS & \(3.1357\pm0.1124\) & \(3.1411\pm0.1131\) & 4/4\\
fine & B momentum maximum component & \(0.3181\pm0.0886\) & \(0.3418\pm0.0894\) & 4/4\\
\bottomrule\end{tabular}
\end{adjustbox}\end{table}

\subsection{Complete-method training protocol}
Seeds 7201, 7227, 7263 and 7291 were specified before training, and all four runs were included in the analysis. Runs use CPU float64, two threads per process, 512 interior and 192 exterior points per region, and 256 points per interface. The three blocks use 600 Adam($10^{-3}$)+1000 L-BFGS, 300 Adam($10^{-4}$)+1000 L-BFGS, and 300 Adam($10^{-4}$)+1000 L-BFGS, resetting optimizers between blocks. L-BFGS uses rate 0.8, history 50, max\_eval 1250 and strong-Wolfe search. The original final BDF physical objective is fixed throughout; blocks change only the optimizer schedule, not residual components. BDF retains the first-order framework, hard-interface principles and correction rule, with liftings, loss weights and budgets adapted to the mixed boundaries.

The complete configuration uses \cref{s-eq:bdf-loss} from the first update through all three optimizer blocks. The staged changes below belong only to the earlier soft-BD control.

\subsection{Soft-BD staged objective and validation protocol}

Seed 20260911 was used for development. The three-stage procedure below
was fixed before reserved seeds 20260912--20260915 were run. All stages use
the same architecture and physical data, with progressively changed
constitutive and traction weights.

For reproducibility, define \(\langle R\rangle_2\) as the empirical mean
of squared residual components over the fixed points, and set
\begin{equation}
 \begin{aligned}
 C_r&=\sigma_r-(\nabla u_r-p_rI),&
 G_r&=a_r-\nabla p_r,& P_r&=\operatorname{div}a_r,\\
 M_S&=-\operatorname{div}\sigma_S,&
 M_B&=-\operatorname{div}\sigma_B+10u_B,&
 Q&=q_D+0.1\nabla p_D.
 \end{aligned}
 \label{s-eq:bdf-residuals}
\end{equation}
Let \(F=\operatorname{mean}_{\mathrm{bottom}}(q_y)+2/3\), using the
fixed bottom Sobol points, and let \(R_j^n,R_j^r\) denote the native
and reconstructed physical traction residuals at \(j=SB,BD\).
The final-stage objective is
\begin{equation}
 \begin{aligned}
 \mathcal L_{BDF}={}&\sum_{r=S,B}\Big[
    \langle C_r\rangle_2+20\langle\operatorname{div}u_r\rangle_2
    +\langle G_r/10\rangle_2+\langle P_r/10\rangle_2\Big]\\
 &+\langle M_S/10\rangle_2+100\langle M_B/21\rangle_2
    +50\langle Q\rangle_2+20\langle\operatorname{div}q_D\rangle_2\\
 &+20F^2+5\sum_{j=SB,BD}
       \big(\langle R_j^n\rangle_2+\langle R_j^r\rangle_2\big).
 \end{aligned}
 \label{s-eq:bdf-loss}
\end{equation}
The denominator 21 is the inherited Brinkman scale \(1+10+10\).
Stage 1 uses the same volume terms except that each constitutive term is
\(\langle C_r/10\rangle_2\), and includes only native tractions:
\(R_{SB}^n/10\) and
\(((R_{BD}^n)_x/(1+\beta),(R_{BD}^n)_y/10)\), each with weight 5.
Stage 2 restores the constitutive terms and adds physical reconstructed
traction penalties of weight 5 while retaining those native scalings.
Stage 3 is exactly \cref{s-eq:bdf-loss}.

Reconstruction uses \(\sigma^r=\nabla u-pI\), requiring only first
spatial derivatives of the primary network outputs.  These added terms
vanish for a smooth exact solution.  Moreover, for the same point set,
\begin{equation}
 \langle A\rangle_2+\langle B\rangle_2
 =2\langle(A+B)/2\rangle_2+\tfrac12\langle A-B\rangle_2.
 \label{s-eq:bdf-dual-traction}
\end{equation}
Here \(A-B=(C_S-C_B)n\) at SB and \(A-B=C_Bn\) at BD when
\(A=R^n\), \(B=R^r\).  The penalty therefore checks the corresponding
interface constitutive components directly.  This is an algebraic identity,
not a trace bound derived from an interior constitutive \(L^2\) norm or a
parameter-uniform stability result.

The entire three-stage procedure was specified in advance for reserved
seeds 20260912--20260915.  Each seed starts from its own random initialization,
then continues only its own preceding checkpoint.  The schedule is shown in \cref{s-tab:bdf-budget}.

\begin{table}[!htbp]
 \centering\small
 \caption{Soft-BD BDF schedule for every reserved seed.  Each stage starts a
 new optimizer history; all three stages count toward the reported cost.}
 \label{s-tab:bdf-budget}
 \begin{adjustbox}{max width=\linewidth}
\begin{tabular}{@{}lrrrl@{}}
 \toprule
 Stage & Adam updates & Adam rate & L-BFGS limit & Traction objective\\
 \midrule
 1 & 600 & \(10^{-3}\) & 1000 & Scaled native\\
 2 & 300 & \(10^{-4}\) & 1000 & Scaled native + physical reconstructed\\
 3 & 300 & \(10^{-4}\) & 1000 & Both physical\\
 \bottomrule
 \end{tabular}
\end{adjustbox}
\end{table}

All stages use float64, physical input coordinates, fixed scrambled Sobol
points (512 interior points per region, 192 boundary samples per region, and
256 points per interface), and the same network widths.  L-BFGS uses rate
0.8, history size 50, strong-Wolfe line search, and gradient/change tolerances
\(10^{-10}/10^{-12}\).  The total nominal budget is 1200 Adam updates
plus 3000 L-BFGS iterations.  Development was serial and validation used at
most two concurrent processes on the RTX 3050 Ti Laptop GPU with PyTorch
2.7.1+cu118.  Concurrent-run wall times are not a controlled speed comparison.
Reference fields are evaluated only after a stage finishes; they are not
used in the loss, for point selection, or to alter a reserved seed's schedule.

\subsection{Independent finite-element reference and acceptance rule}

An independent FEniCS reference uses Taylor--Hood velocity/pressure in the
upper regions and continuous piecewise-linear Darcy pressure, coupled by
GMRES interface iteration (tolerance \(10^{-10}\)) with MUMPS subproblem
solves on fitted meshes.  Its upper bilinear form uses
\(\nabla u:\nabla v\), consistently with the nonsymmetric pseudo-stress.
Four mesh levels were computed without neural input; \cref{s-tab:bdf-fem}
reports reference conservation diagnostics.  At level 160, the interface
iteration residual was \(1.78\times10^{-13}\).  The largest equal-weight discrete relative \(L^2\)
change on the reference sample grids among the six fields from level 80 to 160 was \(0.201\%\), for
Darcy flux. The corresponding changes for $p_S$ and $p_B$ were $0.0089275\%$ and $0.0337062\%$, respectively, as recorded in the \texttt{refinement\_160} entry under \texttt{FEM\_refinement} in \nolinkurl{data/bdf_validation_results.json}. These adjacent-mesh changes are not rigorous bounds on error relative to the continuous solution; the small BDF pressure-error reductions reported here are relative to the level-160 reference. The reference met the criteria of field changes below 1\%,
global mass defect below 1.5\%, interface integrated flux jump below 0.1\%,
and iteration residual below \(10^{-10}\).

\begin{table}[!htbp]
 \centering\small
 \caption{BDF finite-element reference refinement.  Conservation values are
 percentages; they are properties of the numerical reference, not exact truth.}
 \label{s-tab:bdf-fem}
 \begin{adjustbox}{max width=\linewidth}
\begin{tabular}{@{}rrrr@{}}
 \toprule
 Level & Total degrees of freedom & Global mass defect & Interface flux jump\\
 \midrule
 20 & 4,572 & 4.0467 & 0.3361\\
 40 & 17,780 & 2.2461 & 0.0859\\
 80 & 70,116 & 1.1823 & 0.0218\\
 160 & 278,468 & 0.6066 & 0.00552\\
 \bottomrule
 \end{tabular}
\end{adjustbox}
\end{table}

The raw Darcy flux reconstructed from continuous pressure is not strictly
\(H(\operatorname{div})\)-conservative, and the sampled reference flux is
also projected into a continuous vector space.  Side-wall reconstruction
leakage and interface projection error are retained in the reference audit;
the level-80-to-160 pointwise interface flux change is about 1.066\%.
Thus field errors are measured relative to a converged numerical reference,
and FEM and neural conservation numbers should not be treated as identical
discretization measures.

Both BDF groups use the reference-based field norms and BDF joint rule in \cref{s-tab:additional-gates,s-tab:evaluation-grids}. The neural global defect is the absolute top-plus-bottom outward flux normalized by the analytic inlet magnitude $2/3$. Side walls are impermeable by construction. The earlier soft-BD dense check covers the six fields separately from its 401-point traction evaluation. Development runs are excluded from validation statistics.
\begin{table}[!htbp]
 \centering\small\setlength{\tabcolsep}{4pt}
 \caption{Soft-BD BDF primary errors and mass defect in percent.  Dev is the
 development seed; V1--V4 are reserved seeds.  Every row passes the complete
 rule, including both traction definitions and the dense-grid field check.}
 \label{s-tab:bdf-seeds}
 \begin{adjustbox}{max width=\linewidth}
\begin{tabular}{@{}llrrrrrrr@{}}
 \toprule
 Seed & Role & \(u_S\) & \(p_S\) & \(u_B\) & \(p_B\) & \(q_D\) & \(p_D\) & Mass\\
 \midrule
20260911 & Dev & 1.047 & 2.400 & 4.516 & 2.562 & 2.570 & 0.326 & 0.205\\
20260912 & V1 & 0.829 & 2.462 & 3.663 & 2.640 & 2.252 & 0.324 & 0.200\\
20260913 & V2 & 0.980 & 2.592 & 4.094 & 2.360 & 2.423 & 0.333 & 0.267\\
20260914 & V3 & 1.040 & 1.994 & 4.224 & 1.980 & 2.524 & 0.319 & 0.220\\
20260915 & V4 & 0.999 & 2.483 & 4.181 & 2.546 & 2.474 & 0.386 & 0.254\\
 \bottomrule
 \end{tabular}
\end{adjustbox}
\end{table}

\subsection{Soft-BD BDF configuration: retrospective correction only}
Seeds 20260912--20260915 retain 43,865 parameters, hard kinematics, soft BD and their original staged training; correction adds no training. The same BDF correction uses GL32 with GL48/boundary-integral checks, saving coefficients before reference errors; 93 checks pass. Main-grid S-pressure $L^2$ (mean$\pm$sample SD) changes from $2.3832\pm0.2653\%$ to $2.3480\pm0.2638\%$ and B-pressure from $2.3815\pm0.2920\%$ to $2.3677\pm0.2916\%$, both modestly improving in 4/4 seeds. Fine-grid means change as $2.3823\%\to2.3472\%$ and $2.3723\%\to2.3584\%$. B-momentum component RMS on the soft-BD evaluator rises from 0.06817 to 0.07049; it is not the same point set as the complete BDF grid RMS. Velocities, Darcy fields and tractions remain unchanged. This supplementary retrospective cohort is neither the complete hard-BD method nor a controlled hard-BD ablation.

\subsection{Staged complementary traction mismatches}
\begin{table}[!htbp]
\centering\small
\caption{Sequential stages for BDF development seed 20260911. Entries are nondimensional vector RMS in physical units; stages are not independent replicates.}
\label{s-tab:traction-history}
\begin{adjustbox}{max width=\linewidth}
\begin{tabular}{@{}lrrrr@{}}\toprule
Stage & SB native & SB recon. & BD native & BD recon.\\\midrule
1 & 0.040097 & 0.718156 & 0.081237 & 0.776907\\
2 & 0.286238 & 0.014755 & 0.235175 & 0.029116\\
3 & 0.019938 & 0.007286 & 0.030955 & 0.022864\\\bottomrule
\end{tabular}
\end{adjustbox}
\end{table}
Against a common diagnostic line of 0.1, stage 1 has small native but large reconstructed residuals, stage 2 the reverse, and stage 3 controls both. Stage 2 strengthens constitutive weights and adds reconstructed traction; stage 3 restores physical native-traction scaling. Each continuation also adds 300 Adam and 1000 L-BFGS iterations. This demonstrates complementary diagnostic utility, without isolating effects of extra training, weights or traction terms. The archived stage-2 pass flag refers only to its then-active reconstructed criterion and is retained unchanged.

\subsection{Additional adverse momentum diagnostics}
\begin{table}[H]\centering\normalsize\renewcommand{\arraystretch}{1.18}\setlength{\tabcolsep}{4pt}
\caption{BDF: number of seeds with increases after correction.}
\begin{adjustbox}{max width=\linewidth}
\begin{tabular}{@{}lcc@{}}\toprule
Metric & Main & Fine\\\midrule
B momentum grid RMS & 4/4 & 4/4\\
B momentum grid maximum component & 4/4 & 4/4\\
B momentum separate-evaluator RMS & 4/4 & --\\
Reconstructed B grid RMS & 4/4 & 4/4\\
Reconstructed B grid maximum component & 4/4 & 4/4\\
\bottomrule\end{tabular}
\end{adjustbox}\end{table}\section{Component pairs and traction diagnostics}
\subsection{Frozen settings and integrity}
The first paired batch contains six BD runs at $K_B=10^{-6}$ and six nonlinear-trace runs below. A separately prespecified cross-permeability extension adds 12 BD runs at $K_B=10^{-2},10^{-4}$. The main BD comparison uses 18 checkpoints, three pairs per coefficient with the same three seeds, not nine independent replicates.  All final-budget checkpoints are included, with matched initialization and samples and no seed selection, post-validation tuning or pressure shifts. Both batches use the same CPU float64 setup, two threads per process, at most two concurrent processes, 400 Adam updates plus two 300-iteration L-BFGS blocks with history reset, sampling and evaluation grids. Timings from different dates are not a cross-date speed ranking.

The coefficient-scan group uses seeds 3101, 3127, 3163, 3181 and 3203; the independent hard-BD group uses 4101, 4127, 4163, 4181 and 4203, all at nominal budgets of 400+300+300. Three nonpassing extreme soft-BD complete-route runs exceed the Stokes-pressure gate. These distinct-seed groups remain separate from the cross-permeability pairs.

\begin{table}[!htbp]
\centering\small
\caption{MMS2 coefficient stress test at fixed $K_D=10^{-2}$. Each group contains five seeds; passes require all 14 acceptance criteria on the main grid. The hard-BD route also passes 5/5 on the fine grid. Coefficient-scan and hard-BD routes use different seeds.}
\label{s-tab:extreme-comparison}
\begin{adjustbox}{max width=\linewidth}
\begin{tabular}{@{}lccc@{}}
\toprule
Configuration & $K_B$ & Passes & Mean $p_S$ $L^2$ (\%)\\
\midrule
Original hard trace & \(10^{-2}\) & 5/5 & 7.231\\
Original hard trace & \(10^{-4}\) & 2/5 & 11.663\\
Original hard trace & \(10^{-6}\) & 2/5 & 11.612\\
No pressure auxiliary & \(10^{-6}\) & 0/5 & 17.904\\
Hard BD traction & \(10^{-6}\) & 5/5 & 8.769\\
\bottomrule
\end{tabular}
\end{adjustbox}
\end{table}

\begin{table}[!htbp]
\centering\small
\caption{Hard BD traction over five independent validation seeds. All errors are percentages; standard deviations use $n-1$. The full $H^1$ norm includes field values and all first derivatives and is not part of the joint acceptance rule.}
\label{s-tab:extreme-fields}
\begin{adjustbox}{max width=\linewidth}
\begin{tabular}{@{}lccc@{}}
\toprule
Field & Mean $L^2$ $\pm$ s.d. & Worst $L^2$ & Mean $H^1$\\
\midrule
Stokes velocity & \(0.043\pm0.008\) & 0.053 & 0.157\\
Stokes pressure & \(8.769\pm0.455\) & 9.544 & 5.256\\
Brinkman velocity & \(0.287\pm0.065\) & 0.382 & 0.752\\
Brinkman pressure & \(5.530\pm0.343\) & 6.112 & 24.982\\
Darcy flux & \(1.620\pm0.090\) & 1.729 & 7.228\\
Darcy pressure & \(0.058\pm0.026\) & 0.103 & 0.157\\
\bottomrule
\end{tabular}
\end{adjustbox}
\end{table}
\begin{table}[!htbp]\centering\small\setlength{\tabcolsep}{4pt}
\caption{Uncorrected paired hard/soft BD results, $n=3$ per method and coefficient. Continuous metrics are mean $\pm$ sample standard deviation; main-grid pressure errors are percentages and $H^1$ is the full norm. Passes require all 14 acceptance criteria on both grids. Times are concurrent CPU measurements within each group; $10^{-6}$ is from the earlier batch.}
\label{s-tab:paired-bd}
\begin{adjustbox}{max width=\linewidth}
\begin{tabular}{@{}llccccc@{}}\toprule
$K_B$ & Traction & $p_S$: $L^2$ & $p_B$: $L^2$ & $p_B$: $H^1$ & Time (s) & Passes\\\midrule
\(10^{-2}\) & Hard & \(7.36\pm0.56\) & \(4.80\pm0.40\) & \(20.06\pm1.95\) & \(275.8\pm11.5\) & 3/3\\
\(10^{-2}\) & Soft & \(7.93\pm0.73\) & \(4.61\pm0.35\) & \(23.27\pm2.55\) & \(183.5\pm8.9\) & 3/3\\
\(10^{-4}\) & Hard & \(9.16\pm0.49\) & \(5.80\pm0.13\) & \(26.03\pm1.82\) & \(263.3\pm11.7\) & 3/3\\
\(10^{-4}\) & Soft & \(12.64\pm1.71\) & \(7.97\pm1.10\) & \(37.99\pm5.23\) & \(178.5\pm6.5\) & 0/3\\
\(10^{-6}\) & Hard & \(9.13\pm0.83\) & \(5.82\pm0.13\) & \(26.26\pm2.67\) & \(246.2\pm3.5\) & 3/3\\
\(10^{-6}\) & Soft & \(12.60\pm1.60\) & \(7.93\pm1.09\) & \(37.91\pm4.89\) & \(165.8\pm3.8\) & 0/3\\
\bottomrule\end{tabular}
\end{adjustbox}\end{table}
\subsection{Cross-permeability paired diagnostics}

Preflight equation and hard-constraint checks for the two higher permeabilities have maximum defect $7.11\times10^{-15}$. Initialization and samples are matched within pairs and across coefficients. The maximum main/fine field $L^2$ change over all 18 checkpoints is 0.000913 percentage points. Per-seed pressure results are reported once in Supplement S8; complete six-field, eight traction-component and conservation records accompany the manuscript.

\begin{table}[!htbp]\centering\small\setlength{\tabcolsep}{4pt}
\caption{Paired diagnostic means and sample standard deviations. BD reconstructed normal RMS is a single nondimensional component; global mass defect is a percentage.}
\label{s-tab:cross-traction-mass}
\begin{adjustbox}{max width=\linewidth}
\begin{tabular}{@{}llcc@{}}\toprule
$K_B$ & Traction & BD normal RMS & Mass defect (\%)\\\midrule
\(10^{-2}\) & Hard & \(0.01149\pm0.00213\) & \(0.03241\pm0.00109\)\\
\(10^{-2}\) & Soft & \(0.00870\pm0.00277\) & \(0.04207\pm0.00786\)\\
\(10^{-4}\) & Hard & \(0.01248\pm0.00168\) & \(0.01305\pm0.00323\)\\
\(10^{-4}\) & Soft & \(0.00856\pm0.00234\) & \(0.01473\pm0.00793\)\\
\(10^{-6}\) & Hard & \(0.01271\pm0.00205\) & \(0.01080\pm0.00627\)\\
\(10^{-6}\) & Soft & \(0.00857\pm0.00267\) & \(0.01871\pm0.00412\)\\
\bottomrule\end{tabular}
\end{adjustbox}\end{table}
Hard BD has lower full Brinkman-pressure $H^1$ error in every pair at each coefficient. At $10^{-2}$, Stokes-pressure $L^2$ is lower in 2/3 pairs and Brinkman-pressure $L^2$ in 1/3; the hard/soft means are 7.36\%/7.93\% and 4.80\%/4.61\%, respectively. Mean BD reconstructed normal RMS is larger for hard BD at all three coefficients; training time increases by about 50.3\%, 47.6\% and 48.5\%. All soft-BD failures at $10^{-4}$ and $10^{-6}$ concern Stokes-pressure $L^2$. These records retain the trade-offs across metrics.

\paragraph{Independent five-seed diagnostics}
The following quantities concern the five independent hard-BD validation seeds 4101, 4127, 4163, 4181 and 4203 in \cref{s-tab:extreme-fields}; they are separate from the three-seed cross-permeability pairs above. 
The component-averaged reconstructed traction RMS is 0.011953 at SB and 0.009824 at BD (vector RMS is $\sqrt{2}$ times larger: 0.016903 and 0.013893). The mean global exterior mass defect is 0.01221\%.
The mass defect integrates all exterior edges, including Darcy side flux,
and is normalized by total absolute boundary flux and source integrals.
Brinkman momentum has mean raw component RMS 749.902, versus 0.000749901
after division by \(1+K_B^{-1}\); its physical-term relative residual is
0.00143595 (denominator: the sum of the RMS stress-divergence, drag, and
forcing terms). The smaller scaled number is not a raw-equation accuracy
claim. Mean Brinkman-pressure full \(H^1\) error remains 24.982\%.
Stokes-pressure mean bias accounts for 99.13\% of its squared \(L^2\)
error on average; this is diagnostic only, without correcting the pressure.
Mean training time is 203.8 s under concurrent CPU execution, which does not
establish a speed advantage. The scope of these results is summarized in the conclusions.

\begin{figure}[!htbp]\centering
\includegraphics[width=\linewidth]{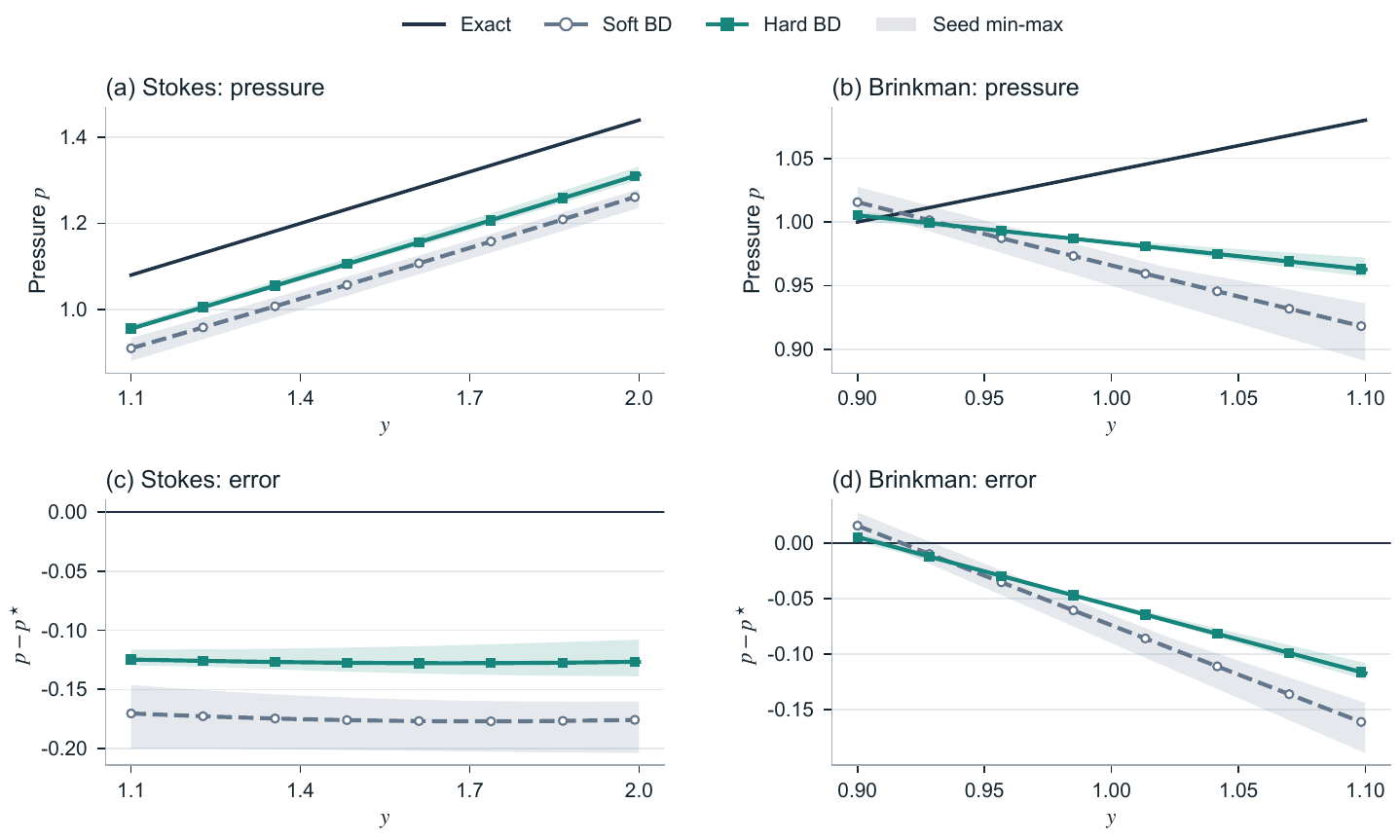}
\caption{Extreme-MMS2 paired centerline pressures and signed errors, $x=0.5$. Black: exact; colored: means over seeds 4301, 4327 and 4363, with min--max bands. All 121 centerline points per region use the original main-grid outputs, without retraining or pressure shifts.}
\label{s-fig:archived-pressure-profiles}\end{figure}
\subsection{Nonlinear Darcy pressure trace}
Set $K_B=K_D=0.01$ and add to the MMS2 pressure in each region
\[
 H(x,y)=0.25\sin(\pi x)\cosh\bigl(\pi(y-0.9)\bigr).
\]
Keep the upper velocities unchanged, add $-(K_D/\mu)\nabla H$ to the Darcy flux, and update forcing and exterior data accordingly. Since $\Delta H=0$ and $\partial_yH=0$ at BD, mass equations and interface loads remain compatible. Preflight PDE, interface and corner checks have maximum defect $7.11\times10^{-15}$.

Both variants use hard BD traction. Only the trainable Darcy pressure-trace correction is multiplied by $c=1$ (split) or $c=\mu/K_D=100$ (amplified); the interior bubble scaling is unchanged. Network capacity, function class, paired raw weights and samples match, but initial physical trace amplitudes differ. This does not isolate optimization from initialization effects. Seed 20260910 is developmental; independent seeds 5101, 5127 and 5163 are run after freezing.

\begin{table}[!htbp]
\centering\small
\caption{Three-seed means for the nonlinear BD pressure trace. Relative trace errors use 1601 interface points and no pressure shift; full $H^1$ includes trace values and tangential derivatives. The 14 gates are an inherited diagnostic screen, not application-validated accuracy requirements for this test.}
\label{s-tab:nonlinear-trace}
\begin{adjustbox}{max width=\linewidth}
\begin{tabular}{@{}lrrrr@{}}\toprule
Variant & Both-grid passes & $L^2$ (\%) & $H^1$ (\%) & Time (s)\\\midrule
split & 0/3 & 22.6331 & 40.0937 & 276.05\\
amplified & 0/3 & 18.6877 & 32.7651 & 274.85\\\bottomrule
\end{tabular}
\end{adjustbox}
\end{table}
Amplification reduces mean error, but neither variant passes the joint screen. These results do not support accurate nonlinear pressure-trace learning at this budget or general superiority of scale separation. The analytical endpoint-linear interpolant has trace $L^2$ error 13.6269\%; it is a mathematical reference, not a trained control. Developmental errors are 25.54\% and 19.58\%, respectively, and are excluded from formal means.

\section{Pressure-mode control and interface-component validation}
\subsection{Mean flow from exterior mass balance}
Let $a=y_{BD}$, $b=y_{SB}$, $t=y_T$, $h=b-a$, $L=x_1-x_0$, and $D_g(y)=g_x(x_1,y)-g_x(x_0,y)$ denote the prescribed side-velocity difference in the relevant region. For divergence-free upper flow with SB normal continuity, $U(y)=\int_{x_0}^{x_1}u_y(x,y)\,dx$ satisfies $U'(y)=-D_g(y)$. Integrating from the prescribed top velocity gives
\begin{equation}
\bar u_{B,y}^{\mathrm{bc}}=
\frac{1}{L}\int_{x_0}^{x_1}g_y(x,t)\,dx
+\frac{1}{L}\int_b^t D_g(s)\,ds
+\frac{1}{Lh}\int_a^b(s-a)D_g(s)\,ds.
\label{s-eq:boundary-mean-flow}
\end{equation}
This expression uses geometry and prescribed exterior velocities only. The divergence-free field $u=(xy,-y^2/2)$, with nonzero side-flux difference, independently checks the sign and layer weights: both analytical and numerical means are $-0.5016666667$ for this geometry. No interior velocity labels enter the formula.
\subsection{Relation between the physical moment and learned momentum}
The correction direction, its unit response and its traction invariants are established in the main-text pressure-correction section. One additional relation explains why the physical moment differs from the raw mean momentum of an approximate network:
\begin{equation}
M_B(z)-\langle(\R_B^m)_y\rangle_{\Omega_B}
=\frac{\mu}{K_B}\left(\bar u_{B,y}^{\mathrm{bc}}-\langle u_{B,y}\rangle_{\Omega_B}\right).
\label{s-eq:moment-discrepancy}
\end{equation}
For an exact incompressible solution, the two means agree and both momentum quantities vanish. Approximate velocities need not be exactly divergence-free; their discrepancy is amplified by drag. This explains why reducing the boundary-mass-consistent moment need not reduce pointwise momentum RMS.
\subsection{Development, freezing and independent validation}
Prior seed 4301 is used only for development and excluded from validation. Direct slope fitting from the raw momentum mean at 512 training points gives $c\approx-816.27$ and severely worsens pressure; using the same order-32 Gauss rule still gives about $-816.15$. The boundary-mass-consistent rule instead gives $c\approx0.606322$, agreeing at orders 16, 32 and 48. Failed development records are retained, not counted as successful validation.

Order-32 coefficient fitting, raw/corrected dual-grid evaluation and validation seeds 6101, 6127 and 6163 are frozen before validation. Two interface variants per seed give six CPU float64 runs, with two threads per process and at most two concurrent processes. Each uses 512 interior points per region, 192 exterior points per region and 192 interface points, 400 Adam updates (0.001), and two 300-iteration L-BFGS blocks (0.8, history 50, strong-Wolfe, reset between blocks). Coefficients are recorded before exact-pressure error evaluation. Order 48 independently checks the moment; main/fine grids are 161 by 121/321 by 241, with 801/1601 interface points. The original 14 acceptance criteria are unchanged, with kinematic and normal-pressure-derivative diagnostics reported additionally.
\subsection{Equal-parameter interface-release control}
Both variants retain the same regional/trace networks, auxiliaries, scaling, hard exterior data and hard native BD traction. With $\phi(x)=\xi(1-\xi)$, existing raw regional outputs define
\begin{align}
J_{SB}(x)&=\phi(x)\bigl(\widehat u_S(x,b)-\widehat u_B(x,b)\bigr),\\
J_{BD}(x)&=\phi(x)\bigl(\widehat u_{B,y}(x,a)-\widehat q_{D,y}(x,a)\bigr).
\label{s-eq:released-jumps}
\end{align}
SB traces use $T_{SB}+J_{SB}/2$ and $T_{SB}-J_{SB}/2$; BD normal traces add and subtract $J_{BD}/2$. Original lifting factors extend the changes, which vanish at endpoints; BD tangential velocity is unchanged. Pressure parameterization and residual-group coefficients are fixed. Hard kinematic residuals are identities, while the released variant uses the same groups as soft penalties. Both have 46,074 trainable parameters. Raw weights and samples match pairwise, though initial physical fields differ. This is a controlled release of the present method, not an asserted reproduction of another published architecture.

\begin{table}[H]\centering\small\setlength{\tabcolsep}{4pt}
\caption{Kinematic, conservation and raw-momentum means and sample standard deviations. Mass defect is a percentage; other entries are nondimensional RMS.}
\label{s-tab:control-invariants}
\begin{adjustbox}{max width=\linewidth}
\begin{tabular}{@{}lcccc@{}}\toprule
State & SB jump & BD jump & Mass (\%) & Raw B momentum\\\midrule
Hard/raw & \(0.00000\pm0.00000\) & \(0.00000\pm0.00000\) & \(0.00630\pm0.00080\) & \(728.56\pm139.43\)\\
Hard/corrected & \(0.00000\pm0.00000\) & \(0.00000\pm0.00000\) & \(0.00630\pm0.00080\) & \(728.62\pm139.51\)\\
Released/raw & \(0.00083\pm0.00010\) & \(0.00220\pm0.00048\) & \(0.01591\pm0.01357\) & \(949.15\pm198.47\)\\
Released/corrected & \(0.00083\pm0.00010\) & \(0.00220\pm0.00048\) & \(0.01591\pm0.01357\) & \(948.87\pm198.43\)\\
\bottomrule\end{tabular}
\end{adjustbox}\end{table}

\begin{table}[H]\centering\small\setlength{\tabcolsep}{4pt}
\caption{Cost and centerline slope. Training time is repeated for raw/corrected states of each checkpoint, not summed; the exact secant slope is 0.4.}
\label{s-tab:control-cost}
\begin{adjustbox}{max width=\linewidth}
\begin{tabular}{@{}lccc@{}}\toprule
State & Training (s) & Correction (s) & Slope\\\midrule
Hard/raw & \(247.14\pm0.91\) & \(0.0000\pm0.0000\) & \(-0.1833\pm0.1021\)\\
Hard/corrected & \(247.14\pm0.91\) & \(0.0545\pm0.0031\) & \(0.3774\pm0.0112\)\\
Released/raw & \(336.63\pm38.31\) & \(0.0000\pm0.0000\) & \(-0.2121\pm0.1099\)\\
Released/corrected & \(336.63\pm38.31\) & \(0.0633\pm0.0072\) & \(0.3719\pm0.0148\)\\
\bottomrule\end{tabular}
\end{adjustbox}\end{table}

\begin{table}[H]\centering\small\setlength{\tabcolsep}{4pt}
\caption{Pressure errors (percent) for all validation states; raw/corrected share each trained endpoint.}
\label{s-tab:control-seeds}
\begin{adjustbox}{max width=\linewidth}
\begin{tabular}{@{}llrrrr@{}}\toprule
Seed & State & $p_S$: $L^2$ & $p_B$: $L^2$ & $p_B$: $H^1$ & $\partial_y p_B$: $L^2$\\\midrule
6101 & hard/raw & 7.451 & 4.702 & 22.199 & 129.661\\
6101 & hard/corrected & 0.880 & 0.886 & 4.443 & 24.234\\
6101 & released/raw & 7.173 & 4.645 & 24.100 & 140.956\\
6101 & released/corrected & 1.212 & 1.209 & 4.329 & 23.369\\
6127 & hard/raw & 7.124 & 4.877 & 23.020 & 134.525\\
6127 & hard/corrected & 0.933 & 0.718 & 5.064 & 28.385\\
6127 & released/raw & 7.361 & 4.866 & 23.020 & 134.566\\
6127 & released/corrected & 0.967 & 0.718 & 4.998 & 28.149\\
6163 & hard/raw & 9.617 & 5.995 & 29.881 & 174.863\\
6163 & hard/corrected & 0.794 & 1.001 & 5.207 & 29.122\\
6163 & released/raw & 9.267 & 5.592 & 31.382 & 183.857\\
6163 & released/corrected & 1.257 & 1.716 & 5.374 & 29.818\\
\bottomrule\end{tabular}
\end{adjustbox}\end{table}
 The correction controls one low-order pressure direction using prescribed boundary data; it does not guarantee reduction of every raw momentum residual. The rule uses the stated steady, constant-coefficient setting and top/side velocity conditions; other geometries and mixed boundaries require a new derivation.

\begin{figure}[!htbp]\centering
\includegraphics[width=\linewidth]{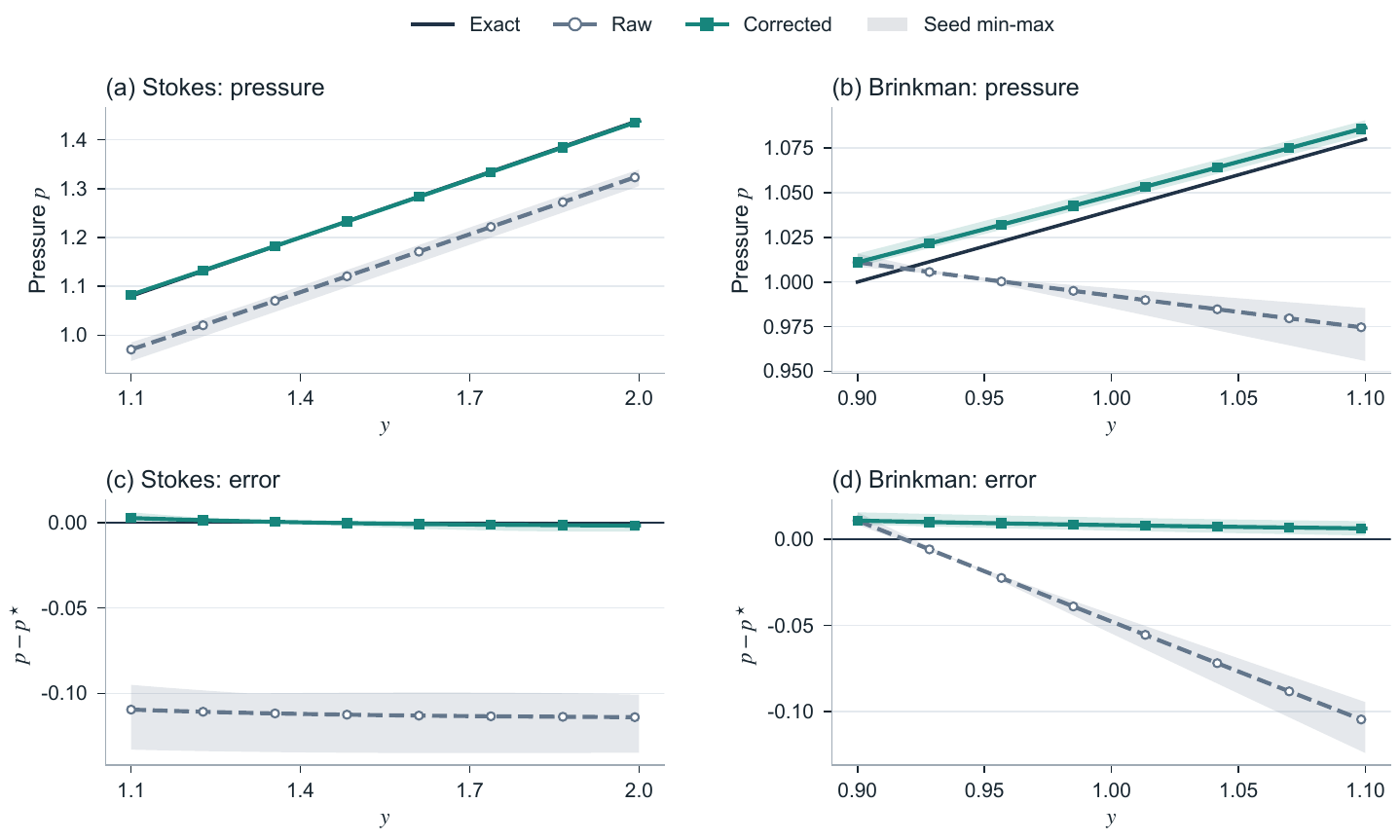}
\caption{Hard-shared interface-control profiles at $x=0.5$: exact pressures and raw/corrected means with min--max bands over seeds 6101, 6127 and 6163, pairing the same trained endpoints.}\label{s-fig:pressure-profiles}
\end{figure}

\section{Complete-method validation across MMS cases and permeabilities}\label{s-sec:coverage-supp}
We test MMS1 and MMS2 at three Brinkman permeabilities: $K_B=10^{-2}$, $10^{-4}$ and $10^{-6}$. Darcy permeability is fixed at $K_D=10^{-2}$. Each uses the complete hard-kinematic, hard-BD method with three seeds from \cref{s-tab:seed-registry}. The primary summary for these six combinations is given once in main-text Table~5. This section adds per-run records, derivative/momentum diagnostics and low-permeability grid checks. Spatial pressure behavior is shown in main-text Figures~6, 8 and 9.

Every run uses 512 interior and 192 exterior points per region and 192 per interface, 400 Adam updates at $10^{-3}$, then two 300-iteration L-BFGS blocks at rate 0.8 with history 50, strong-Wolfe search and a history reset. Order-32 Gauss integration fixes the correction before reference-error evaluation; order 48 checks it independently. The grids and 14-gate rule are specified in \cref{s-tab:evaluation-grids,s-tab:additional-gates}. In MMS2 the known boundary/forcing term in the correction reduces analytically to 1.6 and is checked by quadrature at all three permeabilities; it is not fitted to pressure errors.
\begin{table}[!htbp]\centering\footnotesize\setlength{\tabcolsep}{3pt}
\caption{Complete-method validation for both MMS cases at all three permeabilities: main-grid raw $\to$ corrected pressure errors (percent) for all 18 checkpoints; the last column records both-grid acceptance of the two states.}\label{s-tab:coverage-seeds-prospective}
\begin{adjustbox}{max width=\linewidth}
\begin{tabular}{@{}lllcccc@{}}\toprule
Case, $K_B$ & BD & Seed & $p_S$: $L^2$ & $p_B$: $L^2$ & $p_B$: $H^1$ & Pass\\\midrule
MMS1, $10^{-2}$ & H & 6201 & $3.180\to0.857$ & $4.053\to1.976$ & $15.544\to7.803$ & Y$\to$Y\\
MMS1, $10^{-2}$ & H & 6227 & $6.472\to1.459$ & $6.121\to2.462$ & $20.477\to4.090$ & Y$\to$Y\\
MMS1, $10^{-2}$ & H & 6263 & $1.758\to1.442$ & $3.494\to2.852$ & $11.749\to10.716$ & Y$\to$Y\\
MMS1, $10^{-4}$ & H & 6201 & $3.534\to1.546$ & $4.568\to2.014$ & $17.405\to8.093$ & Y$\to$Y\\
MMS1, $10^{-4}$ & H & 6227 & $7.166\to1.549$ & $7.719\to3.335$ & $26.022\to5.548$ & Y$\to$Y\\
MMS1, $10^{-4}$ & H & 6263 & $1.998\to1.389$ & $3.662\to2.394$ & $12.142\to10.453$ & Y$\to$Y\\
MMS1, $10^{-6}$ & H & 6201 & $3.583\to1.290$ & $4.775\to2.291$ & $16.637\to8.425$ & Y$\to$Y\\
MMS1, $10^{-6}$ & H & 6227 & $6.852\to1.378$ & $7.379\to2.788$ & $26.963\to5.077$ & Y$\to$Y\\
MMS1, $10^{-6}$ & H & 6263 & $2.277\to1.404$ & $3.775\to2.383$ & $12.999\to10.253$ & Y$\to$Y\\
MMS2, $10^{-2}$ & H & 6201 & $7.688\to1.368$ & $4.721\to1.020$ & $22.832\to5.595$ & Y$\to$Y\\
MMS2, $10^{-2}$ & H & 6227 & $5.912\to1.069$ & $3.829\to0.787$ & $17.717\to5.589$ & Y$\to$Y\\
MMS2, $10^{-2}$ & H & 6263 & $7.482\to1.876$ & $4.938\to1.118$ & $19.951\to4.674$ & Y$\to$Y\\
MMS2, $10^{-4}$ & H & 6201 & $10.233\to0.812$ & $6.504\to1.165$ & $31.796\to4.919$ & N$\to$Y\\
MMS2, $10^{-4}$ & H & 6227 & $6.727\to0.795$ & $4.337\to0.747$ & $20.186\to5.312$ & Y$\to$Y\\
MMS2, $10^{-4}$ & H & 6263 & $8.815\to1.212$ & $5.880\to0.708$ & $24.292\to4.077$ & Y$\to$Y\\
MMS2, $10^{-6}$ & H & 6201 & $10.154\to0.766$ & $6.389\to1.293$ & $31.317\to5.090$ & N$\to$Y\\
MMS2, $10^{-6}$ & H & 6227 & $6.689\to0.814$ & $4.268\to0.765$ & $20.039\to5.346$ & Y$\to$Y\\
MMS2, $10^{-6}$ & H & 6263 & $8.545\to1.202$ & $5.888\to0.864$ & $23.592\to4.001$ & Y$\to$Y\\
\bottomrule\end{tabular}
\end{adjustbox}\end{table}

Per-run normal-derivative checks can be reproduced from \nolinkurl{data/mms_complete_method_results.json}, which is available from the authors upon reasonable request. In its \texttt{runs} records, the 12 checkpoints with \texttt{cohort=prospective} have a smaller \texttt{brinkman\_\allowbreak p\_\allowbreak y\_\allowbreak l2\_\allowbreak relative} after correction. For the six low-permeability MMS1 checkpoints, the \nolinkurl{MMS1_low_permeability_metrics} records show a smaller \texttt{brinkman\_\allowbreak p\_\allowbreak h1\_\allowbreak semi\_\allowbreak abs\_\allowbreak error} on both grids in every run. Since $\partial_x p_B$ is unchanged by the correction, this decrease in the gradient-error seminorm implies a decrease in the $L^2$ error of $\partial_y p_B$. The \nolinkurl{MMS1_low_permeability_runs} and \nolinkurl{MMS1_low_permeability_gates} records in the same JSON file record all 14 criteria passing for each raw/corrected state on each grid.

\begin{table}[!htbp]\centering\small\setlength{\tabcolsep}{4pt}
\caption{Complete-method diagnostic means. Derivative errors are percentages; raw momentum is component-averaged RMS. Training and correction costs are seconds; training is counted once per endpoint.}\label{s-tab:coverage-cost-gradient}
\begin{adjustbox}{max width=\linewidth}
\begin{tabular}{@{}lcccc@{}}\toprule
Case, $K_B$ & $\partial_y p_B$: raw/corr. & B mom.: raw/corr. & Train (s) & Corr. (s)\\\midrule
MMS1, $10^{-2}$ & 24.37/9.62 & 0.26/0.24 & 239.92 & 0.0506\\
MMS2, $10^{-2}$ & 117.66/29.49 & 0.23/0.17 & 250.97 & 0.0534\\
MMS2, $10^{-4}$ & 148.52/26.02 & 6.99/7.15 & 248.77 & 0.0581\\
MMS2, $10^{-6}$ & 145.88/26.07 & 759.43/759.61 & 249.53 & 0.0615\\
\bottomrule\end{tabular}
\end{adjustbox}\end{table}

\clearpage
\subsection{Low-permeability MMS1: fine-grid and momentum diagnostics}
The two additional low-permeability MMS1 cases use the same complete-method protocol as the other four configurations. Primary pressure means are in main-text Table~5; the tables below add fine-grid pressure checks and main/fine momentum diagnostics. All raw/corrected states pass the 14-gate rule, and all six runs pass the order-48 and invariance checks.

\begin{table}[H]\centering\normalsize\renewcommand{\arraystretch}{1.18}\setlength{\tabcolsep}{4pt}
\caption{MMS1, $K_B=10^{-4}$. Fine-grid pressure and main/fine momentum: mean $\pm$ sample SD; final column: decreases/increases/unchanged.}
\begin{adjustbox}{max width=\linewidth}
\begin{tabular}{@{}lcccc@{}}\toprule
Grid & Metric & Raw & Corrected & $\downarrow/\uparrow/=$\\\midrule
main & B momentum RMS & \(12.324\pm2.469\) & \(12.303\pm2.482\) & 3/0/0\\
main & B momentum maximum component & \(29.502\pm3.706\) & \(29.612\pm3.894\) & 0/1/2\\
fine & $p_S$: $L^2$ (\%) & \(4.232\pm2.654\) & \(1.494\pm0.091\) & 3/0/0\\
fine & $p_B$: $L^2$ (\%) & \(5.316\pm2.130\) & \(2.581\pm0.680\) & 3/0/0\\
fine & $p_B$: $H^1$ (\%) & \(18.523\pm7.007\) & \(8.031\pm2.453\) & 3/0/0\\
fine & B momentum RMS & \(12.324\pm2.469\) & \(12.303\pm2.482\) & 3/0/0\\
fine & B momentum maximum component & \(29.503\pm3.705\) & \(29.613\pm3.893\) & 0/1/2\\
\bottomrule\end{tabular}
\end{adjustbox}\end{table}

\begin{table}[H]\centering\normalsize\renewcommand{\arraystretch}{1.18}\setlength{\tabcolsep}{4pt}
\caption{MMS1, $K_B=10^{-6}$. Fine-grid pressure and main/fine momentum: mean $\pm$ sample SD; final column: decreases/increases/unchanged.}
\begin{adjustbox}{max width=\linewidth}
\begin{tabular}{@{}lcccc@{}}\toprule
Grid & Metric & Raw & Corrected & $\downarrow/\uparrow/=$\\\midrule
main & B momentum RMS & \(1303.942\pm319.294\) & \(1303.916\pm319.279\) & 3/0/0\\
main & B momentum maximum component & \(3133.675\pm583.484\) & \(3133.571\pm583.315\) & 1/0/2\\
fine & $p_S$: $L^2$ (\%) & \(4.237\pm2.356\) & \(1.357\pm0.060\) & 3/0/0\\
fine & $p_B$: $L^2$ (\%) & \(5.310\pm1.861\) & \(2.487\pm0.264\) & 3/0/0\\
fine & $p_B$: $H^1$ (\%) & \(18.866\pm7.245\) & \(7.918\pm2.625\) & 3/0/0\\
fine & B momentum RMS & \(1303.941\pm319.292\) & \(1303.915\pm319.277\) & 3/0/0\\
fine & B momentum maximum component & \(3133.750\pm583.473\) & \(3133.646\pm583.304\) & 1/0/2\\
\bottomrule\end{tabular}
\end{adjustbox}\end{table}

\section{Pressure-correction controls for benchmarks and BD pairs}\label{s-sec:retrospective-supp}
The ten kinematic-hard-trace MMS benchmark checkpoints and eighteen hard/soft-BD paired checkpoints are evaluated before and after the same boundary-mass-consistent correction. These 28 controls are separate from complete-method validation. The tables report per-checkpoint pressures and joint acceptance of each state; aggregate pressure conclusions are discussed in the main text. Here all records use the extended MMS grids and component 14-gate diagnostic in \cref{s-tab:evaluation-grids,s-tab:additional-gates}. Original benchmark acceptance remains the main-text benchmark rule on its original grid. Correction coefficients are determined before reference-error evaluation, and pressure means are not removed.
\begin{table}[!htbp]\centering\footnotesize\setlength{\tabcolsep}{3pt}
\caption{Retrospective MMS: raw $\to$ corrected pressure errors (percent) for every checkpoint; the last column records both-grid acceptance of the two states.}\label{s-tab:coverage-seeds-retrospective-mms}
\begin{adjustbox}{max width=\linewidth}
\begin{tabular}{@{}lllcccc@{}}\toprule
Case, $K_B$ & BD & Seed & $p_S$: $L^2$ & $p_B$: $L^2$ & $p_B$: $H^1$ & Pass\\\midrule
MMS1, $10^{-2}$ & S & 20260901 & $1.884\to2.026$ & $3.011\to3.135$ & $5.619\to5.678$ & Y$\to$Y\\
MMS1, $10^{-2}$ & S & 20260902 & $2.677\to3.718$ & $4.299\to6.523$ & $22.448\to13.282$ & Y$\to$Y\\
MMS1, $10^{-2}$ & S & 20260903 & $4.274\to2.982$ & $4.343\to4.242$ & $25.020\to11.852$ & Y$\to$Y\\
MMS1, $10^{-2}$ & S & 20260904 & $7.048\to3.279$ & $6.446\to5.852$ & $40.713\to13.908$ & Y$\to$Y\\
MMS1, $10^{-2}$ & S & 20260905 & $2.639\to2.379$ & $2.952\to4.692$ & $18.725\to10.868$ & Y$\to$Y\\
MMS2, $10^{-2}$ & S & 20260901 & $5.965\to0.536$ & $3.535\to0.946$ & $18.994\to4.941$ & Y$\to$Y\\
MMS2, $10^{-2}$ & S & 20260902 & $8.595\to1.021$ & $5.187\to1.003$ & $26.966\to6.005$ & Y$\to$Y\\
MMS2, $10^{-2}$ & S & 20260903 & $7.077\to1.371$ & $3.887\to1.262$ & $22.101\to6.503$ & Y$\to$Y\\
MMS2, $10^{-2}$ & S & 20260904 & $8.387\to1.874$ & $5.009\to0.926$ & $24.893\to7.568$ & Y$\to$Y\\
MMS2, $10^{-2}$ & S & 20260905 & $6.883\to0.974$ & $4.035\to0.893$ & $21.484\to6.286$ & Y$\to$Y\\
\bottomrule\end{tabular}
\end{adjustbox}\end{table}
\begin{table}[!htbp]\centering\footnotesize\setlength{\tabcolsep}{3pt}
\caption{Retrospective pairs: raw $\to$ corrected pressure errors (percent) for every checkpoint; the last column records both-grid acceptance of the two states.}\label{s-tab:coverage-seeds-retrospective-pairs}
\begin{adjustbox}{max width=\linewidth}
\begin{tabular}{@{}lllcccc@{}}\toprule
Case, $K_B$ & BD & Seed & $p_S$: $L^2$ & $p_B$: $L^2$ & $p_B$: $H^1$ & Pass\\\midrule
MMS2, $10^{-2}$ & H & 4301 & $7.993\to1.938$ & $5.084\to1.179$ & $22.165\to6.941$ & Y$\to$Y\\
MMS2, $10^{-2}$ & S & 4301 & $8.357\to1.292$ & $4.741\to1.225$ & $25.374\to7.853$ & Y$\to$Y\\
MMS2, $10^{-2}$ & H & 4327 & $6.953\to1.504$ & $4.346\to0.693$ & $19.702\to4.651$ & Y$\to$Y\\
MMS2, $10^{-2}$ & S & 4327 & $8.338\to1.390$ & $4.870\to0.730$ & $24.002\to5.800$ & Y$\to$Y\\
MMS2, $10^{-2}$ & H & 4363 & $7.130\to1.953$ & $4.981\to1.548$ & $18.311\to4.285$ & Y$\to$Y\\
MMS2, $10^{-2}$ & S & 4363 & $7.087\to1.463$ & $4.205\to0.644$ & $20.436\to5.186$ & Y$\to$Y\\
MMS2, $10^{-4}$ & H & 4301 & $9.594\to1.274$ & $5.951\to0.885$ & $27.250\to6.534$ & Y$\to$Y\\
MMS2, $10^{-4}$ & S & 4301 & $12.780\to1.022$ & $7.861\to1.533$ & $39.678\to7.136$ & N$\to$Y\\
MMS2, $10^{-4}$ & H & 4327 & $9.268\to1.065$ & $5.716\to0.639$ & $26.908\to4.210$ & Y$\to$Y\\
MMS2, $10^{-4}$ & S & 4327 & $14.281\to0.824$ & $9.119\to0.804$ & $42.169\to5.132$ & N$\to$Y\\
MMS2, $10^{-4}$ & H & 4363 & $8.624\to1.286$ & $5.741\to1.041$ & $23.942\to4.011$ & Y$\to$Y\\
MMS2, $10^{-4}$ & S & 4363 & $10.867\to0.956$ & $6.935\to0.636$ & $32.128\to4.407$ & N$\to$Y\\
MMS2, $10^{-6}$ & H & 4301 & $9.700\to1.413$ & $5.897\to0.915$ & $27.561\to6.661$ & Y$\to$Y\\
MMS2, $10^{-6}$ & S & 4301 & $12.146\to1.243$ & $7.600\to1.684$ & $38.967\to7.295$ & N$\to$Y\\
MMS2, $10^{-6}$ & H & 4327 & $9.515\to1.046$ & $5.901\to0.675$ & $28.029\to4.246$ & Y$\to$Y\\
MMS2, $10^{-6}$ & S & 4327 & $14.379\to0.974$ & $9.147\to0.776$ & $42.178\to5.221$ & N$\to$Y\\
MMS2, $10^{-6}$ & H & 4363 & $8.171\to1.144$ & $5.670\to1.004$ & $23.193\to4.008$ & Y$\to$Y\\
MMS2, $10^{-6}$ & S & 4363 & $11.274\to1.205$ & $7.043\to0.593$ & $32.572\to4.505$ & N$\to$Y\\
\bottomrule\end{tabular}
\end{adjustbox}\end{table}

\end{document}